\newcommand{\Ad}{\operatorname{Ad}}
\newcommand{\id}{\operatorname{id}}
\newcommand{\Span}{\operatorname{Span}}
\newcommand{\Tr}{\operatorname{Tr}}
\newcommand{\Geo}{\operatorname{Geo}}
\newcommand{\Br}{\operatorname{Br}}
\newcommand{\KMS}{\operatorname{KMS}}
\newcommand{\GS}{\operatorname{GS}}
\newcommand{\CS}{\operatorname{CS}}
\newcommand{\AF}{\operatorname{AF}}
\newcommand{\UHF}{\operatorname{UHF}}
   \theoremstyle{plain}
   \newtheorem{thm}{Theorem}[section]
   \newtheorem{prop}[thm]{Proposition}
   \newtheorem{lemma}[thm]{Lemma}  
   \newtheorem{cor}[thm]{Corollary}
   \theoremstyle{definition}
   \newtheorem{example}[thm]{Example}
   \theoremstyle{remark}
\definecolor{mybgcolor}{gray}{0.8}
\definecolor{myframecolor}{rgb}{.647,.129,.149}
\newmdenv[style=mystyle]{important}
   \numberwithin{equation}{section}
        \date{\today}
\title[Ground states on UHF algebras]{Ground states for generalized gauge actions on UHF algebras}
\author{Klaus Thomsen}
\date{\today}
\email{matkt@math.au.dk}
\address{Department of Mathematics, Aarhus University, Ny Munkegade, 8000 Aarhus C, Denmark}
\begin{document}

\maketitle

\begin{abstract} We describe the structure of ground states and ceiling states for generalized gauge actions on a UHF algebra. It is shown that both sets are affinely homeomorphic to the state space of a unital AF algebra, and that any pair of unital AF algebras can occur in this way, independently of the field of KMS states. In addition we study the subset of the ground states called $\KMS_\infty$-states.
 \end{abstract}

\section{Introduction and statement of the main result} 
 
Let $A$ be a unital $C^*$-algebra and $\alpha = \left(\alpha_t\right)_{t \in \mathbb R}$ a continuous one-parameter group of automorphisms on $A$; in the following often called a flow on $A$. The infinitesimal generator $\delta$ of $\alpha$ is a linear map $\delta : D(\delta) \to A$ defined on the dense $*$-algebra $D(\delta) $ of elements $a \in A$ for which the limit 
$$
\delta(a) = \lim_{t \to 0} \frac{\alpha_t(a) - a}{t} \ 
$$
exists. A state $\omega$ on $A$ is a ground state for $\alpha$ when 
\begin{equation}\label{groundstate}
-i \omega(a^*\delta(a)) \geq 0 
\end{equation}
for all $a \in D(\delta)$, and a ceiling state when
\begin{equation}\label{ceilingstate}
i \omega(a^*\delta(a)) \geq 0 
\end{equation}
for all $a \in D(\delta)$, \cite{BR}. Ground states for flows on UHF algebras were introduced and studied in the early days of quantum statistical mechanics; see in particular \cite{Ru}, \cite{PS}, \cite{BR} and \cite{AM} and the references therein. It was soon realized that a ground state need not be unique, see Example 5.3.20 in \cite{BR}, but apparently no one has ever undertaken a study of how big a set of states the ground states can be. It is the purpose of this paper to initiate such a study by handling the flows called generalized gauge actions in \cite{Th2}.


We denote the compact convex set of ground, resp. ceiling states for a flow $\alpha$ by $\GS(\alpha)$, resp. $\CS(\alpha)$. Since generalized gauge actions are approximately inner it follows from \cite{PS} that $\GS(\alpha)$ and $\CS(\alpha)$ are not empty for the flows we consider. In fact, it was shown in \cite{Th1}, and it follows also from the much more general result in \cite{LLN}, that $\GS(\alpha)$ is affinely homeomorphic to the state space of a quotient $C^*$-algebra of the fixed point algebra when $\alpha$ is a generalized gauge action on an AF algebra. This sub-quotient whose state space is a copy of $\GS(\alpha)$ is an AF algebra and the first question concerning the structure of $\GS(\alpha)$ is therefore which AF algebras can occur here when $\alpha$ is a generalized gauge action on a UHF algebra. There is also an AF algebra whose state space is a copy of $\CS(\alpha)$ and the same question therefore applies to that. It was shown in \cite{Th2} that the field of KMS states can be almost arbitrary for a generalized gauge action on a UHF algebra, and it is natural to wonder if there is a relation between the field of KMS states and the sets of ground and ceiling states for such an action. Our main result is the following theorem showing that in general there are no relation, and that the state spaces of any pair of AF algebras can occur as the sets of ground and ceiling states. In the formulation of the theorem the Choquet simplex of $\beta$-KMS states for a flow $\alpha$ is denoted by $\KMS_\beta(\alpha)$ and the state space of a $C^*$-algebra $A$ is denoted by $S(A)$. Also it should be noted that while we allow AF algebras to be finite finite dimensional, a UHF algebra is always infinite dimensional.

\begin{thm}\label{MAIN0} Let $\gamma$ be a generalized gauge action on an $\AF$ algebra and let $U$ be a $\UHF$ algebra. Let $A_+$ and $A_-$ be $\AF$ algebras. There is a generalized gauge action $\alpha$ on $U$ such that 
\begin{itemize}
\item $\KMS_\beta(\alpha)$ is strongly affinely isomorphic to $\KMS_\beta(\gamma)$ for all $\beta \neq 0$,
\item  $\GS(\alpha)$ is affinely homeomorphic to $S(A_+)$ and
\item $\CS(\alpha)$  is affinely homeomorphic to $S(A_-)$.
\end{itemize}
\end{thm}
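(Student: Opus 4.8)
The plan is to reduce the statement to a realization problem and then solve it by a tensor product construction that decouples the three requirements. By the structure result quoted above, for any generalized gauge action $\alpha$ on a $\UHF$ algebra the sets $\GS(\alpha)$ and $\CS(\alpha)$ are the state spaces $S(B_+)$ and $S(B_-)$ of $\AF$ sub-quotients $B_\pm$ of the fixed point algebra determined by $\alpha$; hence it suffices to build a single generalized gauge action on a $\UHF$ algebra whose ground $\AF$ algebra has state space $S(A_+)$, whose ceiling $\AF$ algebra has state space $S(A_-)$, and whose field of KMS states matches that of $\gamma$. The first thing I would record is the behaviour of all three invariants under tensor products. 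The tensor product of two generalized gauge actions is again one (the generators, i.e. the energies, simply add), the tensor product of $\UHF$ algebras is $\UHF$, and in the finite-dimensional model the minimal-energy projection of $H_A \otimes 1 + 1 \otimes H_B$ is $P^A_{\min} \otimes P^B_{\min}$, so that the ground corner algebra of a product is the tensor product of the ground corner algebras, and likewise for ceiling states. Passing to the inductive limit I would show $B_\pm(\alpha \otimes \alpha') \cong B_\pm(\alpha) \otimes B_\pm(\alpha')$, together with the companion statement that if $\alpha'$ has a unique $\beta$-KMS state for every $\beta \neq 0$, then restriction to the first tensor factor is a strong affine isomorphism $\KMS_\beta(\alpha \otimes \alpha') \cong \KMS_\beta(\alpha)$ for all $\beta \neq 0$.

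Granting these functorial facts, the construction is to set $\alpha = \alpha_K \otimes \alpha_+ \otimes \alpha_-$ on $U = U_K \otimes U_+ \otimes U_-$, where the three blocks carry the three invariants in isolation. The block $\alpha_K$ is produced by the realization theorem of \cite{Th2}: it is a generalized gauge action on a $\UHF$ algebra whose field of KMS states is strongly affinely isomorphic to that of $\gamma$, and I would in addition arrange that its minimal-energy and maximal-energy configurations are each unique, so that it contributes a single ground and a single ceiling state. The block $\alpha_+$ is designed to have ground $\AF$ algebra with state space $S(A_+)$, a unique ceiling state, and a unique $\beta$-KMS state for every $\beta \neq 0$; the block $\alpha_-$ is its mirror image, realizing $S(A_-)$ at the ceiling, with a unique ground state and a unique KMS state at each $\beta \neq 0$. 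Combining with the functorial facts then yields $\GS(\alpha) \cong S(A_+)$, $\CS(\alpha) \cong S(A_-)$ and $\KMS_\beta(\alpha) \cong \KMS_\beta(\gamma)$ for all $\beta \neq 0$, which is the theorem.

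The main obstacle is the construction of the block $\alpha_+$ (and, symmetrically, $\alpha_-$): one must make the ground state space as large and as general as $S(A_+)$ while keeping the $\beta$-KMS state unique at every finite inverse temperature $\beta \neq 0$. The key point is that these are in fact compatible, because degeneracy of the absolute energy minimum produces a rich space of ground states in the $\beta = \infty$ limit without by itself creating more than one finite-temperature equilibrium state, exactly as a classical spin system may carry residual zero-temperature entropy yet have a unique Gibbs measure at every positive temperature. Concretely, I would realize the Bratteli diagram of $A_+$ as the minimal-energy sub-diagram inside an ambient $\UHF$ algebra $\bigotimes_n M_{k_n}$, assigning compatibly with the embeddings the common lowest energy to the minimal projections lying along the diagram of $A_+$ and strictly larger energies, growing rapidly with the level, to all remaining minimal projections. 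The rapid growth forces, for each fixed $\beta \neq 0$, the global Gibbs weights to converge to a single, uniquely determined state, so that $\KMS_\beta(\alpha_+)$ is a point, while the degenerate minimum that survives at $\beta = \infty$ leaves ground states affinely homeomorphic to $S(A_+)$; a mirror arrangement of the high-energy end makes the maximal configuration unique. The step I expect to demand the most care is verifying that a single explicit choice of multiplicities $k_n$ and energy increments achieves simultaneously that (i) the KMS$_\beta$ state is unique for every $\beta \neq 0$, (ii) the ground sub-quotient is $A_+$ on the nose rather than a proper sub-quotient of it, and (iii) the ambient algebra remains a genuine, infinite-dimensional $\UHF$ algebra even when $A_+$ is finite dimensional.
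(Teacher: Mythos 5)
Your overall architecture is the same as the paper's: decompose $U$ into tensor factors, put the KMS data on one factor with trivial ground/ceiling structure, put the ground/ceiling data on another factor with trivial KMS structure, and use that geodesic sub-diagrams multiply under tensor products while a unique-KMS factor is absorbed. (The paper uses two factors, packaging $A_+$ and $A_-$ into a single block via a disjoint-union Bratteli diagram with a potential that is $-\epsilon_j$ on the $A_+$ copy and $+\epsilon_j$ on the $A_-$ copy; your three-factor version is an inessential variation.) The functorial facts you assert are correct and are proved in the paper.

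However, there is a genuine gap in your construction of the block $\alpha_+$, and it is not merely an unchecked detail: the mechanism you propose points in the wrong direction. You assign the common lowest energy to the sub-diagram of $A_+$ and ``strictly larger energies, growing rapidly with the level'' to all remaining arrows, claiming that the rapid growth forces a unique $\KMS_\beta$ state for each $\beta \neq 0$. For a fixed $\beta>0$ this does the opposite. The $\beta$-KMS states are parametrized by the inverse limit of the simplices $\Delta_{\Br_j}$ under the left-stochastic matrices $\underline{\Br(\beta)}^{(j)}$ whose entries weight arrows by $e^{-\beta F(a)}$; if the off-diagram energies $c_j$ grow so fast that $\sum_j e^{-\beta c_j}<\infty$, these matrices converge summably to the zero-temperature matrices supported on the sub-diagram of $A_+$, and the inverse limit then contains a copy of the tracial simplex of $A_+$ rather than a single point. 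Concretely, with $A_+=\mathbb C^2$ realized as two disjoint columns with cross-arrows of energy $c_j\to\infty$, the transfer matrices are $\left(\begin{smallmatrix}1 & e^{-\beta c_j}\\ e^{-\beta c_j} & 1\end{smallmatrix}\right)$, whose infinite product converges to an invertible matrix when $\sum_j e^{-\beta c_j}<\infty$, so $\KMS_\beta(\alpha_+)$ is a nondegenerate interval for every large $\beta>0$ --- exactly the failure mode illustrated (in reverse) by the paper's Example 5.2 analysis, where \emph{constant} cross-energies give mixing and hence uniqueness. The source of the error is conflating the uniqueness of the product Gibbs state with uniqueness of the KMS state: once the tower algebras $\AF_j(\Br)$ are direct sums rather than full matrix algebras, the latter is governed by weak ergodicity of the backward products of the transfer matrices, which large gaps destroy. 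The correct construction is the opposite of yours: take energy gaps $\pm\epsilon_j$ with $\epsilon_j\to 0$ so fast that $|e^{\beta\epsilon_j}-1|$ is summably small for $\beta$ in windows $[-j,j]$; then at every finite $\beta$ the transfer matrices are multiplicatively close to the multiplicity matrices of the ambient $\UHF$ diagram, whose inverse limit is the one-dimensional trace cone of $U$, giving uniqueness, while the strict (though vanishing) ordering of the energies still singles out exactly the sub-diagram of $A_+$ as the set of geodesics.
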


Combined with Theorem 1.1 in \cite{Th2} it follows that the extreme variation with $\beta$ of the simplexes of $\beta$-KMS states which occur for certain generalized gauge actions on UHF algebras can be extended to $+\infty$ and $-\infty$. More precisely:

\begin{cor}\label{CAR2} Let $U$ be a $\UHF$ algebra, and $A_+$ and $A_-$ two $\AF$ algebras. There is a generalized gauge action $\alpha$ on $U$ such that 
\begin{itemize}
\item for all $\beta \neq 0$ the simplex $\KMS_\beta(\alpha)$ is an infinite dimensional Bauer simplex and $\KMS_\beta(\alpha)$ is not affinely homeomorphic to $\KMS_{\beta'}(\alpha)$ when $\beta \neq \beta'$, 
\item  $\GS(\alpha)$ is affinely homeomorphic to $S(A_+)$ and
\item $\CS(\alpha)$  is affinely homeomorphic to $S(A_-)$.
\end{itemize}
\end{cor}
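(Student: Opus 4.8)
The plan is to deduce the corollary formally from \refthm{MAIN0} and the existence result already established in \cite{Th2}, so that no new analytic work is needed. First I would invoke Theorem 1.1 of \cite{Th2} to produce a single concrete generalized gauge action $\gamma$ on an $\AF$ algebra whose field of KMS states realises the extreme variation demanded by the first bullet: for every $\beta \neq 0$ the simplex $\KMS_\beta(\gamma)$ is an infinite dimensional Bauer simplex, and $\KMS_\beta(\gamma)$ is not affinely homeomorphic to $\KMS_{\beta'}(\gamma)$ whenever $\beta \neq \beta'$. With this $\gamma$ fixed, and with the given data $U$, $A_+$ and $A_-$, I would then apply \refthm{MAIN0} verbatim to obtain a generalized gauge action $\alpha$ on $U$ satisfying the three conclusions of that theorem.

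The second and third bullets of the corollary are then nothing but the last two conclusions of \refthm{MAIN0}, so they require no further argument. For the first bullet the task is to transport the two properties of $\KMS_\beta(\gamma)$ across the strong affine isomorphism $\KMS_\beta(\alpha) \cong \KMS_\beta(\gamma)$ supplied by \refthm{MAIN0}. Here I would use that a strong affine isomorphism is in particular an affine homeomorphism, and that the property of being an infinite dimensional Bauer simplex is an invariant of affine homeomorphism; hence each $\KMS_\beta(\alpha)$ inherits this property directly from $\KMS_\beta(\gamma)$. For the non-isomorphism part I would argue by contradiction: were $\KMS_\beta(\alpha)$ affinely homeomorphic to $\KMS_{\beta'}(\alpha)$ for some pair of nonzero $\beta \neq \beta'$, then composing with the two strong affine isomorphisms provided by \refthm{MAIN0} would produce an affine homeomorphism $\KMS_\beta(\gamma) \cong \KMS_{\beta'}(\gamma)$, contradicting the defining property of $\gamma$. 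This settles the first bullet and completes the proof.

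Since every step is either a verbatim citation or the transport of an affine-homeomorphism invariant across an explicitly furnished isomorphism, I do not anticipate any genuine obstacle. The single point that warrants a word of care is that \refthm{MAIN0} only guarantees \emph{strong} affine isomorphisms of the KMS simplexes, so I must confirm that the two features I rely on—being an infinite dimensional Bauer simplex and the affine-homeomorphism type of the simplex—are preserved by such maps; both are, precisely because a strong affine isomorphism is a fortiori an affine homeomorphism. Everything else is immediate.
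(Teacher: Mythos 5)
Your overall route is exactly the paper's: Corollary \ref{CAR2} is obtained by feeding the action $\gamma$ supplied by Theorem 1.1 of \cite{Th2} into Theorem \ref{MAIN0} and transporting the properties of $\KMS_\beta(\gamma)$ to $\KMS_\beta(\alpha)$; the paper gives no further argument beyond this. The last two bullets are indeed verbatim conclusions of Theorem \ref{MAIN0}.

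The one place where your justification goes wrong is the claim, which you yourself single out as the point needing care, that ``a strong affine isomorphism is a fortiori an affine homeomorphism.'' By the paper's own definition a strong affine isomorphism is an affine \emph{bijection} whose restriction to the extreme boundaries is a homeomorphism; global continuity of the bijection is not part of the definition (if it were, the adjective ``strong'' would be vacuous, since every affine homeomorphism of compact convex sets restricts to a homeomorphism of extreme boundaries). So you cannot transport the two properties by simply composing affine homeomorphisms. The repair is short but should be made explicit: since $\KMS_\beta(\gamma)$ is a Bauer simplex, the strong affine isomorphism carries its compact extreme boundary homeomorphically onto the extreme boundary of $\KMS_\beta(\alpha)$, which is therefore compact, hence closed in the Hausdorff space $\KMS_\beta(\alpha)$, so $\KMS_\beta(\alpha)$ is again a Bauer simplex; it is infinite dimensional because the affine bijection preserves affine independence (or simply because its extreme boundary is infinite). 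Finally, two Bauer simplices are affinely homeomorphic if and only if their extreme boundaries are homeomorphic, so an affine homeomorphism $\KMS_\beta(\alpha)\cong\KMS_{\beta'}(\alpha)$ would yield a homeomorphism of the extreme boundaries of $\KMS_\beta(\gamma)$ and $\KMS_{\beta'}(\gamma)$ and hence an affine homeomorphism $\KMS_\beta(\gamma)\cong\KMS_{\beta'}(\gamma)$, contradicting the choice of $\gamma$. With this substitution your argument is complete.
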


The notion of strong affine isomorphism occurring in Theorem \ref{MAIN0} was introduced in \cite{Th2} and means that there is an affine bijection between the simplexes whose restriction to the extreme boundaries is a homeomorphism.

In \cite{CM} Connes and Marcolli introduced $\KMS_\infty$-states as the states that are weak* limits of $\beta$-KMS states as $\beta$ tends to infinity. Such states are ground states but they constitute often a considerably smaller set. In fact, for the actions we consider here it follows from \cite{Th1}, and again also in the more general setting of \cite{LLN}, that the $\KMS_\infty$-states correspond to trace states when the ground states are identified with the state space of the sub-quotient mentioned above. As we show by example it is in general not all trace states of the sub-quotient that arise from $\KMS_\infty$-states unless one weakens this notion. Indeed, we show that the tracial state space of the sub-quotient is affinely homeomorphic to the set of states whose restriction to the finite-dimensional $C^*$-subalgebras defined by the underlying Bratteli diagram are $\KMS_\infty$-states for the restriction of the given flow. We call these states local $\KMS_\infty$-states and it follows that they constitute a compact Choquet simplex, which by the results we obtain here can be completely arbitrary. What remains to be figured out is the exact distinction between local and genuine $\KMS_\infty$-states; a problem we leave open. It seems to be difficult, in many cases, to fully unravel the structure of the $\KMS_\infty$-states. Note, for example, that it is not even clear if the set of $\KMS_\infty$-states always constitute a convex set of states; not in general and also not for the flows studied in this paper.

\bigskip

\emph{Acknowledgement} I am grateful to Johannes Christensen for discussions, and for reading and commenting on earlier versions of the paper. The work was supported by the DFF-Research Project 2 `Automorphisms and Invariants of Operator Algebras', no. 7014-00145B.



\section{Generalized gauge actions on AF $C^*$-algebras}

The approximately finite-dimensional $C^*$-algebras, or $\AF$ algebras, can be introduced in several ways. For the present purposes it is natural to consider them as a special case of $C^*$-algebras associated to directed graphs, \cite{KPRR}.

Let $\Gamma$ be a countable directed graph with vertex set $\Gamma_V$ and arrow set
$\Gamma_{Ar}$. For an arrow $a \in \Gamma_{Ar}$ we denote by $s(a) \in \Gamma_V$ its source and by
$r(a) \in \Gamma_V$ its range. In this paper we consider only graphs that are row-finite without sinks, meaning that every vertex admits at least one and at most finitely many arrows, i.e.
$1 \leq \# s^{-1}(v) < \infty$
for all vertexes $v$. An infinite path in $\Gamma$ is an element
$p = (p_i)_{i=1}^{\infty}  \in \left(\Gamma_{Ar}\right)^{\mathbb N}$ such that $r(p_i) = s(p_{i+1})$ for all
$i$.  A finite path $\mu = a_1a_2 \cdots a_n = (a_i)_{i=1}^n \in \left(\Gamma_{Ar}\right)^n$ is
defined similarly. The number of arrows in $\mu$ is its \emph{length}
and we denote it by $|\mu|$. A vertex $v \in \Gamma_V$ will be considered as
a finite path of length $0$. The range and source maps, $s$ and $r$, extend to the set of finite paths in the natural way. The $C^*$-algebra $C^*(\Gamma)$ of the graph $\Gamma$ was introduced in this generality in \cite{KPRR} and it is the universal
$C^*$-algebra generated by a collection $S_a, a \in \Gamma_{Ar}$, of partial
isometries and a collection $P_v, v \in \Gamma_V$, of mutually orthogonal projections subject
to the conditions that
\begin{enumerate}
\item[1)] $S^*_aS_a = P_{r(a)}, \ \forall a \in \Gamma_{Ar}$, and
\item[2)] $P_v = \sum_{a  \in s^{-1}(v)} S_aS_a^*, \ \forall v \in \Gamma_V$.
\end{enumerate} 
For a finite path $\mu = (a_i)_{i=1}^{|\mu|}$ of positive length we set
$$
S_{\mu} = S_{a_1}S_{a_2}S_{a_3} \cdots S_{a_{|\mu|}} \ ,
$$
while $S_{\mu} = P_v$ when $\mu$ is the vertex $v$. The elements $S_{\mu}S_{\nu}^*$, where $\mu,\nu$ are finite paths, span a dense $*$-subalgebra in $C^*(\Gamma)$.

A function $F :  \Gamma_{Ar} \to \mathbb R$ will be called a potential on $\Gamma$ in the following. Using it we can define a continuous one-parameter group $\alpha^F = \left(\alpha^F_t\right)_{t \in \mathbb R}$  on $C^*(\Gamma)$ such that \label{alphaF}
$$
\alpha^F_t(S_a) = e^{i F(a) t} S_a
$$
for all $a \in \Gamma_{Ar}$ and 
$$
\alpha^F_t(P_v) = P_v
$$
for all $v \in \Gamma_V$. Such an action is called a generalized gauge action; the gauge action itself being the one-parameter group corresponding to the constant potential $F =1$.

A Bratteli diagram $\Br$, as introduced by Bratteli in \cite{Br}, is a special class of row-finite directed graphs without sinks in which the vertex set $\Br_V$ is partitioned into level sets,
$$
\Br_V = \sqcup_{n=0}^{\infty} \Br_{n} \ ,
$$
where the number of vertexes in the $n$'th level $\Br_{n}$ is finite, $\Br_{0}$ consists of a single vertex $v_0$ and the arrows emitted from $\Br_n$ end in $\Br_{n+1}$, i.e. $r\left(s^{-1}(\Br_n)\right) \subseteq \Br_{n+1}$ for all $n$. Also, as is customary, we assume that $v_0$ is the only source in $\Br$, i.e. $r^{-1}(v)\cap \Br_{Ar} \neq \emptyset$ for all $v \in \Br_V \backslash \{v_0\}$. The $\AF$ $C^*$-algebra $\AF(\Br)$ which was associated to $\Br$ by Bratteli in \cite{Br} is isomorphic to the corner $P_{v_0}C^*(\Br)P_{v_0}$, as can be seen in the following way. Let $\mathcal P_n$ denote the set of finite paths $\mu$ in $\Br$ emitted from $v_0$ and of length $n$, i.e. $s(\mu) = v_0$ and $|\mu| = n$. Let
$\mathcal P^{(2)}_n = \left\{ (\mu,\mu') \in \mathcal P_n \times \mathcal P_n : \ r(\mu) = r(\mu') \right\}$ and set
$$
E^n_{\mu,\mu'} = S_{\mu}S_{\mu'}^*  \ 
$$ 
when $(\mu,\mu') \in \mathcal P^{(2)}_n$. Then 
\begin{equation}\label{22-09-18}
\{E^n_{\mu,\mu'}\}_{(\mu,\mu') \in \mathcal P^{(2)}_n}
\end{equation}
are matrix units in $P_{v_0}C^*(\Br)P_{v_0}$ and $\sum_{\mu \in \mathcal P_n} E^n_{\mu,\mu} = P_{v_0}$. Let $\AF_n(\Br)$ be the $C^*$-subalgebra of $P_{v_0}C^*(\Br)P_{v_0}$ spanned by the matrix units \eqref{22-09-18}. Then $\AF_n(\Br) \subseteq \AF_{n+1}(\Br)$ and
$$
P_{v_0}C^*(\Br)P_{v_0} = \overline{\bigcup_n \AF_n(\Br)} \ .
$$
Since the Bratteli diagram of the tower 
$\mathbb C \subseteq \AF_1(\Br) \subseteq  \AF_2(\Br) \subseteq \AF_3(\Br) \subseteq \cdots $ is $\Br$, it follows from \cite{Br} that $\AF(\Br) \simeq P_{v_0}C^*(\Br)P_{v_0}$.

Let $ F : \Br_{Ar} \to \mathbb R$ be a potential and $\alpha^F$ the corresponding generalized gauge action on $C^*(\Br)$. Extend $F$ to finite paths $\mu = a_1a_2\cdots a_{|\mu|} \in \left(\Br_{Ar}\right)^{|\mu|}$ of positive length such that
$$
F(\mu) = \sum_{i=1}^{|\mu|} F(a_i) \ .
$$
Then
\begin{equation*}\label{22-09-18e}
\alpha^F_t(E^n_{\mu, \mu'}) = e^{it(F(\mu) - F(\mu'))} E^n_{\mu, \mu'} \ = \Ad e^{it H_n}\left(E^n_{\mu,\mu'}\right) \ ,
\end{equation*}
where 
$$
H_n = \sum_{\mu \in \mathcal P_n} F(\mu) E^n_{\mu,\mu} \in \AF_n(\Br) \ .
$$
By restriction the generalized gauge action $\alpha^F$ gives rise to a continuous one-parameter group of automorphisms on $\AF(\Br)$ which we also denote by $\alpha^F$ and call it a generalized gauge action on $\AF(\Br)$. The derivation $\delta_F$ generating the flow $\alpha^F$ on $\AF(\Br)$ is given by the formula
\begin{equation}\label{29-08-20}
\delta_F(a) = i(H_na-aH_n) 
\end{equation}
when $a \in \AF_n(\Br)$.

 We let $P(\Br)$ denote the set of infinite paths $p =p_1p_2p_3\cdots $ in $\Br$ that are emitted from the top vertex, i.e. $s(p_1) = v_0$. For each $n $ and $p \in P(\Br)$ we denote by $p[1,n]$ the finite path in $\Br$ consisting of the first $n$ arrows in $p$, i.e. $p[1,n] = p_1p_2\cdots p_n$. An infinite path $p \in P(\Br)$ is called a geodesic, or an $F$-geodesic if it is necessary to specify the potential, when
$$
F\left(p[1,n]\right) \ \leq \ F(\mu) 
$$
for all $\mu \in \mathcal P_n$ with $r(\mu) = r(p_n)$ and for all $n \in \mathbb N$. Let $\Geo^F(\Br)$ denote the set of geodesics in $P(\Br)$. It is easy to see that $\Geo^F(\Br)$ is never empty. The set of geodesics determine a Bratteli sub-diagram $\Br^+$ of $\Br$ in the following way. For $n \geq 1$ the set of vertexes in $\Br^+_n$ is
$$
\Br^+_n \ = \ \left\{ v \in \Br_n : \ v = r\left(p[1,n]\right) \ \text{for some} \ p \in \Geo^F(\Br) \right\} \ 
$$   
while $\Br^+_0 = \{v_0\}$; the top vertex in $\Br$. The arrows in $\Br^+_{Ar}$ are the arrows $a \in \Br_{Ar}$ with the properties that $a = p_n$ for some geodesic $p = p_1p_2p_3 \cdots \ \in \Geo^F(\Br)$.

\begin{lemma}\label{09-05-19} $P\left(\Br^+\right) \ = \ \Geo^F(\Br)$.
\end{lemma}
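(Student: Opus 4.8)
The plan is to prove the two inclusions separately. The inclusion $\Geo^F(\Br) \subseteq P(\Br^+)$ is essentially a matter of unwinding the definitions, while $P(\Br^+) \subseteq \Geo^F(\Br)$ carries the real content. For the forward inclusion, let $p = p_1p_2\cdots \in \Geo^F(\Br)$. Each arrow $p_n$ occurs in the geodesic $p$ itself, so $p_n \in \Br^+_{Ar}$ by the definition of the arrows of $\Br^+$, and each vertex $r\left(p[1,n]\right)$ lies in $\Br^+_n$. Since $p$ is already an infinite path in $\Br$ with $s(p_1) = v_0$, and the source and range maps of $\Br^+$ are the restrictions of those of $\Br$, it follows immediately that $p$ is an infinite path in $\Br^+$ emitted from $v_0$, i.e. $p \in P(\Br^+)$.

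For the reverse inclusion the natural device is the minimal-weight function. For a vertex $v \in \Br_n$ I would set
$$
m(v) \ = \ \min\left\{ F(\mu) : \ \mu \in \mathcal{P}_n, \ r(\mu) = v \right\},
$$
which is well defined because the graph is row-finite without sources other than $v_0$, so the set on the right is finite and non-empty; by convention $m(v_0) = 0$. By the very definition of a geodesic, a prefix $p[1,n]$ of a geodesic $p$ realizes this minimum, so $F\left(p[1,n]\right) = m\left(r\left(p[1,n]\right)\right)$ for every $n$. The key lemma I would isolate is that \emph{every} arrow $a \in \Br^+_{Ar}$ is tight for $m$, in the sense that it satisfies the Bellman-type identity
$$
m(r(a)) \ = \ m(s(a)) + F(a).
$$
This holds because $a = p_n$ for some geodesic $p$, and then $m(s(a)) = F\left(p[1,n-1]\right)$ and $m(r(a)) = F\left(p[1,n]\right) = F\left(p[1,n-1]\right) + F(a)$, where I use that both $p[1,n-1]$ and $p[1,n]$ are optimal prefixes of the geodesic $p$.

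Granting the tightness of every arrow of $\Br^+$, the reverse inclusion follows by telescoping. Given $q = q_1q_2\cdots \in P(\Br^+)$, every arrow $q_k$ lies in $\Br^+_{Ar}$ and is therefore tight, so telescoping the identities $m(r(q_k)) - m(s(q_k)) = F(q_k)$ over $k = 1, \dots, n$ and using $s(q_1) = v_0$, $m(v_0) = 0$ and $s(q_k) = r(q_{k-1})$ gives $m\left(r\left(q[1,n]\right)\right) = \sum_{k=1}^n F(q_k) = F\left(q[1,n]\right)$. Thus $F\left(q[1,n]\right) = m\left(r\left(q[1,n]\right)\right)$, which is precisely the assertion that $F\left(q[1,n]\right) \leq F(\mu)$ for every $\mu \in \mathcal{P}_n$ with $r(\mu) = r(q_n)$, for every $n$; that is, $q \in \Geo^F(\Br)$.

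I expect the main obstacle to be the tightness lemma, and in particular the observation that an arrow in $\Br^+$ may a priori be witnessed by many different geodesics: one must check that tightness is a property of the arrow alone, namely of its endpoints and its weight relative to $m$, and is independent of the particular geodesic realizing its membership in $\Br^+_{Ar}$. Once tightness is phrased intrinsically through $m$, the concatenation of locally optimal arrows into a globally optimal, hence geodesic, path reduces to the telescoping sum above, and no compactness or path-extension argument is needed since $q$ is given as an infinite path.
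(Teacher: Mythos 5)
Your proof is correct and is essentially the paper's argument in different clothing: the per-arrow tightness identity $m(r(a)) = m(s(a)) + F(a)$ is exactly the content of the paper's inductive step (there phrased as $F(p[1,n-1]) = F(q[1,n-1])$ for a witnessing geodesic $q$ with $q_n = p_n$), and your telescoping sum is the induction on $n$. Both arguments hinge on the same point, namely that every arrow of $\Br^+$ is witnessed by a geodesic whose prefixes realize the minimum $m$ at both of its endpoints.
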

\begin{proof} The inclusion $\Geo^F(\Br) \subseteq P\left(\Br^+\right)$ is obvious and it suffices therefore to consider an infinite path $p \in P\left(\Br^+\right)$ and show that $F(p[1,n]) \leq F(\mu)$ for all $\mu \in \mathcal P_n$ with $r(\mu) = r(p_n)$. To deduce this by induction in $n$ note that it is clearly true when $n=1$. Assume therefore that it holds for $n-1$. By definition of $\Br^+$ there is a $q \in \Geo^F(\Br)$ such that $q_n = p_n$. Since $r\left(p[1,n-1]\right) = r\left(q[1,n-1]\right)$ it follows from the induction hypothesis that $F\left(q[1,n-1]\right) = F\left(p[1,n-1]\right)$ and hence $F\left(q[1,n]\right) = F\left(p[1,n]\right)$ since $p_n = q_n$. Therefore $F\left(p[1,n]\right) = F\left(q[1,n]\right) \leq F(\mu)$ for all $\mu \in \mathcal P_n$ with $r(\mu) = r(p_n)$.
\end{proof}

We denote by $\AF(\Br)^{\alpha^F}$ the fixed point algebra of $\alpha^F$. In order to relate $\AF(\Br)$ and $\AF(\Br^+)$ we use a conditional expectation $\AF(\Br) \ \to \ \AF(\Br)^{\alpha^F}$.

\begin{lemma}\label{15-05-19} The fixed point algebra $\AF(\Br)^{\alpha^F}$ of $\alpha^F$ is the range of a conditional expectation 
$R: \AF(\Br) \to \AF(\Br)^{\alpha^F}$ 
defined such that
\begin{equation}\label{15-05-19}
R(a) \ = \ \lim_{T \to \infty} \frac{1}{T} \int_0^T \alpha^F_t(a) \ \mathrm{d}t \ .
\end{equation}
\end{lemma}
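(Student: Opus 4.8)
The plan is to realize $R$ as the pointwise limit of the finite-time averages
$$
R_T(a) \ = \ \frac{1}{T}\int_0^T \alpha^F_t(a)\, \mathrm d t \ ,
$$
establishing existence of the limit first on a dense $*$-subalgebra, where it can be computed explicitly, and then extending it to all of $\AF(\Br)$. First I would observe that each $R_T$ is a well-defined linear contraction: since the automorphisms $\alpha^F_t$ are isometric, $\|R_T(a)\| \leq \frac1T\int_0^T \|\alpha^F_t(a)\|\,\mathrm dt = \|a\|$, and moreover each $R_T$ is unital (as $P_{v_0}$ is $\alpha^F$-fixed) and completely positive, being a norm limit of convex combinations of the completely positive maps $\alpha^F_t$ via Riemann sums.

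The computation on matrix units is the concrete core. For $(\mu,\mu') \in \mathcal P^{(2)}_n$ the formula $\alpha^F_t(E^n_{\mu,\mu'}) = e^{it(F(\mu)-F(\mu'))}E^n_{\mu,\mu'}$ gives
$$
R_T(E^n_{\mu,\mu'}) \ = \ \left(\frac1T\int_0^T e^{it(F(\mu)-F(\mu'))}\,\mathrm dt\right) E^n_{\mu,\mu'} \ ,
$$
and the scalar factor tends to $1$ when $F(\mu)=F(\mu')$ and to $0$ otherwise as $T \to \infty$. Hence $\lim_{T\to\infty} R_T(a)$ exists for every $a$ in the linear span $\bigcup_n \AF_n(\Br)$ of the matrix units, where it equals the sum of those components $E^n_{\mu,\mu'}$ of $a$ for which $F(\mu)=F(\mu')$. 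Since this span is dense in $\AF(\Br)$ and the operators $R_T$ are uniformly bounded by $1$, a standard $\varepsilon/3$ argument shows that $R(a) = \lim_{T\to\infty}R_T(a)$ exists for all $a \in \AF(\Br)$ and defines a unital completely positive contraction.

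It then remains to identify the range and verify the conditional-expectation properties. Each $R(a)$ is $\alpha^F$-invariant: this is clear for $a$ in the dense subalgebra from the explicit description above, and it passes to the closure by continuity. Conversely, if $a \in \AF(\Br)^{\alpha^F}$ then $R_T(a)=a$ for all $T$, so $R(a)=a$; thus $R$ fixes $\AF(\Br)^{\alpha^F}$ pointwise, which simultaneously shows that the range of $R$ is exactly $\AF(\Br)^{\alpha^F}$ and that $R$ is idempotent. Finally, for $b,c \in \AF(\Br)^{\alpha^F}$ one has $\alpha^F_t(bac)=b\,\alpha^F_t(a)\,c$, hence $R_T(bac)=bR_T(a)c$ and, in the limit, $R(bac)=bR(a)c$; together with unitality and complete positivity this exhibits $R$ as a conditional expectation onto $\AF(\Br)^{\alpha^F}$. (Alternatively one may simply invoke Tomiyama's theorem, since $R$ is a norm-one projection onto a $C^*$-subalgebra.)

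I expect the only point genuinely requiring care to be the passage from the dense subalgebra to all of $\AF(\Br)$: the limit is transparent on matrix units and everything else is formal, so the crux is the uniform-boundedness argument that upgrades pointwise convergence on a dense set to convergence on the whole algebra.
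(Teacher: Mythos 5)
Your proposal is correct and follows essentially the same route as the paper: observe that the finite-time averages are uniformly bounded positive (contractive) maps, compute the limit explicitly on the matrix units $E^n_{\mu,\mu'}$ spanning the dense subalgebra $\bigcup_n \AF_n(\Br)$, and extend by density. The paper compresses the final verification of the conditional-expectation properties into ``easily seen''; your write-up simply supplies those routine details.
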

\begin{proof} Note that $ a \mapsto \frac{1}{T}\int_0^T \alpha^F_t(a) \ \mathrm{d}t$ is a positive contraction for all $T > 0$. Since
$$
\lim_{T \to \infty} \frac{1}{T} \int_0^T \alpha^F_t(E^n_{\mu,\mu'}) \ \mathrm{d}t \ = \ \begin{cases} 0 \  & \ \text{when} \ F(\mu) \neq F(\mu')\\ E^n_{\mu,\mu'} \ & \ \text{when} \ F(\mu) = F(\mu') \ , \end{cases}
$$
it follows therefore that the limit \eqref{15-05-19} exists for all $a \in \AF(\Br)$. The resulting operator $R$ is easily seen to be a conditional expectation onto $\AF(\Br)^{\alpha^F}$.
\end{proof}

Note that 
$$
R(\AF_n(\Br)) \ = \ \AF_n(\Br) \cap  \AF(\Br)^{\alpha^F} \ = \ \Span \{ E^n_{\mu,\mu'} : \ F(\mu) = F(\mu') \}
$$
is the fixed point algebra $\AF_n(\Br)^{\alpha^F}$ of the flow $\Ad e^{itH_n}$ on $\AF_n(\Br)$, and that
$$
\overline{\bigcup_n \AF_n(\Br)^{\alpha^F}} \ = \  \AF(\Br)^{\alpha^F} \ .
$$
Let $\mathcal G_n$ be the set of paths in $\Br^+$ of length $n$ emitted from $v_0$ and set
$$
Q_n = \sum_{\mu \in \mathcal G_n} E^n_{\mu,\mu} \ \in \ \AF_n(\Br) \ .
$$ 
Then $\AF_n(\Br^+)$ is the corner in $\AF_n(\Br)$ defined by $Q_n$, i.e. 
$$
\AF_n(\Br^+) \ = \  \Span \left\{ E^n_{\mu,\mu'}: \ \mu,\mu' \in \mathcal G_n \right\} \ = \ Q_n\AF_n(\Br)Q_n \ .
$$
The map 
$$
q_n(a) \ = \ Q_naQ_n
$$ 
is a positive contraction $q_n :\AF_n(\Br) \to \AF_n(\Br^+)$  whose restriction to $\AF_n(\Br)^{\alpha^F}$ is a unital and surjective $*$-homomorphism since $Q_n$ is in the center of $\AF_n(\Br)^{\alpha^F}$. In addition the diagram
\begin{equation}\label{26-11-18c}
\begin{xymatrix}{
\AF_n(\Br)^{\alpha^F} \ar[d]_{q_n}& \subseteq & \AF_{n+1}(\Br)^{\alpha^F} \ar[d]^{q_{n+1}} \\
\AF_n(\Br^+)  & \subseteq & \AF_{n+1}(\Br^+)  }
  \end{xymatrix}
\end{equation}
commutes. It follows that there is a unital surjective $*$-homomorphism 
$$
q_F : \AF(\Br)^{\alpha^F} \ \to \ \AF(\Br^+)
$$
such that $q_F(a) = Q_naQ_n$ when $a \in \AF_n(\Br)^{\alpha^F}$. Set
$$
Q_F = q_F \circ R \ ,
$$
which is a surjective positive linear map from $\AF(\Br)$ onto $\AF(\Br^+)$.

\begin{prop}\label{09-05-19b} Let $\omega \in S\left(\AF(\Br)\right)$. The following conditions are equivalent:
\begin{enumerate}
\item $\omega$ is a ground state for $\alpha^F$.
\item $\omega(Q_n) = 1$ for all $n \in \mathbb N$.
\item $\omega$ factorizes through $Q_F$, i.e. there is a state $\omega' \in S\left(\AF\left(\Br^+\right)\right)$ such that $\omega \ = \ \omega' \circ Q_F$.
\end{enumerate}
\end{prop}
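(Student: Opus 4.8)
The plan is to verify the two equivalences $(1)\Leftrightarrow(2)$ and $(2)\Leftrightarrow(3)$, working throughout on the dense $\alpha^F$-invariant $*$-subalgebra $\bigcup_n\AF_n(\Br)$, which consists of analytic elements and is a core for $\delta_F$; hence the ground state inequality need only be checked there. For $a\in\AF_n(\Br)$ formula \eqref{29-08-20} gives $-i\,\omega(a^*\delta_F(a))=\omega\big(a^*(H_na-aH_n)\big)$, and expanding $a=\sum a_{\mu\mu'}E^n_{\mu,\mu'}$ in matrix units together with $[H_n,E^n_{\mu,\mu'}]=(F(\mu)-F(\mu'))E^n_{\mu,\mu'}$ turns this into an explicit quadratic expression in the coefficients $a_{\mu\mu'}$, with scalar factors $F(\mu)-F(\nu')$ weighting the numbers $\omega(E^n_{\mu',\nu'})$.

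For $(2)\Rightarrow(1)$ I would feed $\omega(Q_n)=1$ into this expression. A Cauchy--Schwarz argument for the projection $Q_n$ shows $\omega(E^n_{\mu',\nu'})=0$ unless both $\mu'$ and $\nu'$ lie in $\mathcal G_n$; for geodesic paths sharing a range $v$, the value $F(\nu')$ is the minimal $F$-weight among paths ending at $v$, so the factor $F(\mu)-F(\nu')$ is $\geq 0$, while for each fixed $\mu$ the surviving $\mu',\nu'$-sum is of the form $\omega(y^*y)\geq 0$. Grouping by the common range $v$ then makes the whole expression non-negative, so $\omega$ is a ground state.

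For $(1)\Rightarrow(2)$ I would first test the inequality on $a=E^n_{\sigma,\mu}$, where $\sigma$ has minimal $F$-weight among paths with $r(\sigma)=r(\mu)$; this yields $(F(\sigma)-F(\mu))\,\omega(E^n_{\mu,\mu})\geq 0$, hence $\omega(E^n_{\mu,\mu})=0$ for every $\mu$ that is not $F$-minimal in its range. This is where the main obstacle lies: such an argument only confines $\omega$ to the $F$-minimal paths, and $\mathcal G_n$ may be strictly smaller, since an $F$-minimal path need not extend to a geodesic. To close the gap I would use that, by Lemma~\ref{09-05-19} (and since $\Br^+$ has no sinks), the elements of $\mathcal G_n$ are exactly the length-$n$ prefixes of geodesics, so an $F$-minimal $\mu\notin\mathcal G_n$ lies on no infinite branch of the finitely branching tree of $F$-minimal paths; by K\"onig's lemma its subtree is finite, giving $m>n$ for which every length-$m$ extension $\eta$ of $\mu$ is non-minimal. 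Writing $E^n_{\mu,\mu}=\sum_{\eta}E^m_{\eta,\eta}$ over these extensions and applying the first step at level $m$ forces $\omega(E^n_{\mu,\mu})=0$, so $\omega(Q_n)=\sum_{\mu\in\mathcal G_n}\omega(E^n_{\mu,\mu})=\omega(P_{v_0})=1$.

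Finally, for $(2)\Leftrightarrow(3)$ I would note that $(2)$ forces $\alpha^F$-invariance: for $a\in\AF_n(\Br)$ one has $\omega(a)=\omega(Q_naQ_n)$ by Cauchy--Schwarz, and $Q_naQ_n=Q_nR(a)Q_n$ because $F$ is constant on the geodesic paths of a fixed range, so $\omega=\omega\circ R$. Since $q_F$ is a unital surjective $*$-homomorphism acting as $b\mapsto Q_nbQ_n$ on $\AF_n(\Br)^{\alpha^F}$, the identity $\omega(b)=\omega(Q_nbQ_n)$ gives $\ker q_F\subseteq\ker\omega$ on the fixed-point algebra, so $\omega$ descends to a state $\omega'$ on $\AF(\Br^+)$ with $\omega'\circ q_F=\omega|_{\AF(\Br)^{\alpha^F}}$; combined with $\omega=\omega\circ R$ this yields $\omega=\omega'\circ Q_F$, which is $(3)$. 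The converse $(3)\Rightarrow(2)$ is immediate once one checks $Q_F(Q_n)=q_F(Q_n)=1_{\AF(\Br^+)}$, whence $\omega(Q_n)=\omega'(1)=1$.
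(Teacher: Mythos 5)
Your proposal is correct and follows essentially the same route as the paper: the converse direction rests on the same expansion of $-i\,\omega(a^*\delta_F(a))$ in matrix units, with positivity coming from the common factor $F(\mu)-m_{r(\mu)}\geq 0$ over geodesic pairs (the paper runs this through $Q_F$ as $(3)\Rightarrow(1)$, you run it through $\omega(Q_n\,\cdot\,Q_n)$, which is the same computation), and $(1)\Rightarrow(2)$ tests the ground-state inequality on the same elements $E^k_{\mu,\nu'}$. Your explicit K\"onig's lemma argument supplies the justification for the step the paper only asserts, namely that every $\nu\in\mathcal P_n\setminus\mathcal G_n$ has a level $k$ at which all of its extensions fail to be $F$-minimal.
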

\begin{proof} (1) $\Rightarrow$ (2): Note that $1 - Q_n$ is the sum of the projections $E^n_{\nu,\nu}$ where $\nu \in \mathcal P_n \backslash \mathcal G_n$. Let $\nu \in \mathcal P_n \backslash \mathcal G_n$. It suffices to show that $\omega\left(E^n_{\nu,\nu}\right) = 0$. To this end note that since $\nu \notin \mathcal G_n$ there is a $k > n$ such that the image of $E^n_{\nu,\nu}$ in $\AF_k(\Br)$ is a sum
$$
E^n_{\nu,\nu} \ = \ \sum_{\nu' \in A} E^k_{\nu',\nu'}
$$
with the property that for each $\nu' \in A$ there is a path $\mu \in \mathcal P_k$ with $r(\mu) = r(\nu')$ for which $F(\nu') > F(\mu)$. It follows from \eqref{29-08-20} that
$$
\delta_F\left( E^k_{\mu,\nu'}\right) \ = \ i\left(F(\mu)-F(\nu')\right) E^k_{\mu,\nu'} \ . 
$$
Since $\omega$ is a ground state we have that
$$
0 \ \leq \ -i\omega\left( {E^k_{\mu,\nu'}}^* \delta_F\left( E^k_{\mu,\nu'}\right)\right) \ = \ \omega\left(E^k_{\nu',\nu'}\right)\left( F(\mu) - F(\nu')\right) \ ,
$$
which implies that $\omega\left(E^k_{\nu',\nu'}\right) = 0$. It follows that
$$
\omega\left(E^n_{\nu,\nu} \right) \ = \ \sum_{\nu' \in A} \omega\left(E^k_{\nu',\nu'} \right) = 0 \ .
$$
(2) $\Rightarrow$ (3): It follows from (2) that $\omega (a) \ = \ \omega(Q_naQ_n)$ for all $a \in \AF_n(\Br)$ and then from the commutativity of the diagram \eqref{26-11-18c} that we can define a state $\omega'$ on $\AF(\Br^+)$ such that
$$
\omega'|_{\AF_n(\Br^+)} = \omega|_{\AF_n(\Br^+)} \ 
$$
for all $n$. Then $\omega' \circ Q_F(a) \ = \ \omega'(Q_naQ_n) \ = \ \omega(Q_naQ_n) \ = \ \omega(a)$ when $a \in \AF_n(\Br)$, and hence $\omega' \circ Q_F \ = \ \omega$.

(3) $\Rightarrow$ (1): Note that $\bigcup_n \AF_n(\Br)$ is a core for $\delta_F$ by Corollary 3.1.7 in \cite{BR}. To show that $\omega$ is a ground state it suffices therefore to show that $-iQ_F(a^*\delta_F(a)) \geq 0$ when $a \in \AF_n(\Br)$. To this end we write 
$$
a \ = \sum_{(\mu, \mu') \in \mathcal P_n^{(2)}}\lambda_{\mu,\mu'}E^n_{\mu,\mu'} \ ,
$$
where $\lambda_{\mu,\mu'} \in \mathbb C$. For $\mu \in \mathcal P_n$ set
$$
b_{\mu} \ = \sum_{\nu' \in \mathcal G_n, r(\nu') = r(\mu)} \sqrt{F(\mu) - F(\nu')}\lambda_{\mu,\nu'} E^n_{\mu,\nu'} \ ,
$$ 
with the convention that a sum over the empty set is zero. Using that
$$
Q_F(E^n_{\mu,\mu'}) = \begin{cases} E^n_{\mu,\mu'} & \ \text{when} \ \mu,\mu' \in \mathcal G_n, \\ 0 & \ \text{otherwise } \ , \end{cases} 
$$
we find
\begin{equation*}
\begin{split}
&-iQ_F\left(a^*\delta_F(a)\right) \\ 
&\\ 
& = \ \sum_{(\mu,\mu'),(\nu,\nu') \in \mathcal P^{(2)}_n} (F(\nu)-F(\nu'))\overline{\lambda_{\mu,\mu'}}\lambda_{\nu,\nu'} Q_F \left(E^n_{\mu',\mu}E^n_{\nu,\nu'}\right) \\
&\\
& = \ \sum_{(\mu,\mu'),(\nu,\nu') \in \mathcal P^{(2)}_n}   \delta_{\mu,\nu}(F(\nu)-F(\nu'))\overline{\lambda_{\mu,\mu'}}\lambda_{\nu,\nu'} Q_F\left(E^n_{\mu',\nu'}\right) \\
&\\
& = \ \sum_{\mu \in \mathcal P_n} b_{\mu}^*b_{\mu} \ \geq  \ 0 \ .
\end{split}
\end{equation*}

\end{proof}

\begin{thm}\label{09-05-19d} The set $\GS(\alpha^F)$ of ground states for the generalized gauge action $\alpha^F$ is a closed face in the state space of $\AF(\Br)$ affinely homeomorphic with the state space of $\AF\left(\Br^+\right)$ via the map
\begin{equation}\label{16-09-20}
S\left(\AF(\Br^+)\right) \ni \omega \ \mapsto \ \omega \circ Q_F \ .
\end{equation}
\end{thm}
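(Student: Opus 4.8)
The plan is to build everything on \refprop{09-05-19b}, which already contains the substantive content: a state $\omega$ on $\AF(\Br)$ is a ground state precisely when it factors as $\omega = \omega' \circ Q_F$ for some $\omega' \in S(\AF(\Br^+))$. First I would check that the map in \eqref{16-09-20} is a well-defined bijection onto $\GS(\alpha^F)$. That it takes values in $\GS(\alpha^F)$ and hits every ground state is exactly the equivalence (1) $\Leftrightarrow$ (3) of \refprop{09-05-19b}. For injectivity I would use that $Q_F$ is surjective: if $\omega'_1 \circ Q_F = \omega'_2 \circ Q_F$, then for any $b \in \AF(\Br^+)$ I choose $a \in \AF(\Br)$ with $Q_F(a) = b$ and conclude $\omega'_1(b) = \omega'_2(b)$, so $\omega'_1 = \omega'_2$.

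Next I would verify the affine-homeomorphism claim, which is formal. The map is affine because precomposition with the linear map $Q_F$ is affine. It is continuous for the weak$^*$ topologies: if $\omega'_i \to \omega'$ weak$^*$, then $(\omega'_i \circ Q_F)(a) = \omega'_i(Q_F(a)) \to \omega'(Q_F(a))$ for every $a \in \AF(\Br)$. Since $S(\AF(\Br^+))$ is weak$^*$-compact and $\GS(\alpha^F) \subseteq S(\AF(\Br))$ is Hausdorff, a continuous bijection between them is automatically a homeomorphism, so no separate estimate for the inverse is needed.

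Finally, for the statement that $\GS(\alpha^F)$ is a closed face I would again appeal to characterization (2) of \refprop{09-05-19b}, namely $\omega(Q_n) = 1$ for all $n$. Closedness follows since each set $\{\omega \in S(\AF(\Br)) : \omega(Q_n) = 1\}$ is weak$^*$-closed and $\GS(\alpha^F)$ is their intersection over $n$. For the face property, suppose $\omega \in \GS(\alpha^F)$ and $\omega = \lambda \omega_1 + (1-\lambda)\omega_2$ with $\omega_1, \omega_2 \in S(\AF(\Br))$ and $0 < \lambda < 1$. Because $Q_n$ is a projection, $0 \leq \omega_i(Q_n) \leq 1$, and the identity $\lambda \omega_1(Q_n) + (1-\lambda)\omega_2(Q_n) = 1$ forces $\omega_1(Q_n) = \omega_2(Q_n) = 1$ for every $n$; hence $\omega_1, \omega_2 \in \GS(\alpha^F)$ by (2) $\Rightarrow$ (1).

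I do not expect a genuine obstacle here: all of the analytic difficulty has been absorbed into \refprop{09-05-19b}, and what remains is the standard dictionary between surjective positive maps of $C^*$-algebras and closed faces of state spaces. The one point worth stating carefully is the face argument, where it is essential that each $Q_n$ is a projection, so that $\omega_i(Q_n) \in [0,1]$ and a nontrivial convex combination can equal $1$ only if both terms already do.
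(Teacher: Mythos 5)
Your proposal is correct and follows the same route as the paper, which likewise derives the closed-face property from the equivalence of (1) and (2) in Proposition \ref{09-05-19b} and the affine homeomorphism from the equivalence of (1) and (3) together with the surjectivity of $Q_F$; you have merely written out the routine details (injectivity, weak$^*$ continuity, the compact-to-Hausdorff argument, and the projection argument for the face property) that the paper leaves implicit.
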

\begin{proof} That $\GS(\alpha^F)$ is a closed face in $S(\AF(\Br))$ follows from the equivalence of (1) and (2) in Proposition \ref{09-05-19b}. That $\GS(\alpha^F)$ is affinely homeomorphic to $S(\AF(\Br^+))$ via the map \eqref{16-09-20} follows from the equivalence of (1) and (3) in Proposition \ref{09-05-19b} since $Q_F$ is surjective.
\end{proof}

That the set of ground states constitute a closed face in the state space is a general fact and holds for all flows on unital $C^*$-algebras by Theorem 5.3.37 in \cite{BR}.

\begin{example}\label{09-05-19e} A virtue of generalized gauge actions and the description of the set of ground states in Theorem \ref{09-05-19d} is that it is easy to construct examples. To illustrate this observe that the following two Bratteli diagrams, $\Br_1$ and $\Br_2$, are identical and $\AF(\Br_1) = \AF(\Br_2)$ is the CAR algebra. The generalized gauge actions we equip the algebras with are given by the labels on the arrows; the numbers show the value of the potential on the arrow.

\begin{equation*}\label{11-12-18a*}
\begin{xymatrix}{ & \Br_1 & \ar[ld]_-0 v_0 \ar[rd]^-1 && & & &   &\Br_2 & \ar[ld]_-0 v_0 \ar[rd]^-0 \\
& \ar[d]_-0 \ar[rrd]^(0.3){1}  &  & \ar[d]^-1 \ar[lld]_(0.3){1} &&&& & \ar[d]_-0 \ar[rrd]^(0.3){1} &  & \ar[d]^-0 \ar[lld]_(0.3){1} \\  
& \ar[d]_-0  \ar[rrd]^(0.3){1} &  & \ar[d]^-1 \ar[lld]_(0.3){1} &&&& & \ar[d]_-0  \ar[rrd]^(0.3){1} &  & \ar[d]^-0 \ar[lld]_(0.3){1} \\  
& \ar[d]_-0  \ar[rrd]^(0.3){1} &  & \ar[d]^-1 \ar[lld]_(0.3){1} &&&& & \ar[d]_-0  \ar[rrd]^(0.3){1} &  & \ar[d]^-0 \ar[lld]_(0.3){1} \\  
& \ar[d]_-0  \ar[rrd]^(0.3){1} &  & \ar[d]^-1 \ar[lld]_(0.3){1} &&&& & \ar[d]_-0  \ar[rrd]^(0.3){1} &  & \ar[d]^-0 \ar[lld]_(0.3){1} \\  
& \ar[d]_-0  \ar[rrd]^(0.3){1} &  & \ar[d]^-1 \ar[lld]_(0.3){1} &&&& & \ar[d]_-0  \ar[rrd]^(0.3){1} &  & \ar[d]^-0 \ar[lld]_(0.3){1} \\  
& \ar[d]_-0  \ar[rrd]^(0.3){1} &  & \ar[d]^-1 \ar[lld]_(0.3){1} &&&& & \ar[d]_-0  \ar[rrd]^(0.3){1} &  & \ar[d]^-0 \ar[lld]_(0.3){1} \\  
&\vdots &\vdots  & \vdots   &&&& &\vdots &\vdots  & \vdots }
 \end{xymatrix}
 \end{equation*}

\bigskip 

Let $F_i$ be the potential described by the diagram $\Br_i$. The Bratteli sub-diagrams $\Br^+_1$ and $\Br^+_2$ obtained from the $F_1$- and the $F_2$-geodesics are given by the following sub-diagrams:

\begin{equation*}\label{11-12-18a}
\begin{xymatrix}{ &\Br^+_1 & \ar[ld] v_0  & &&&&  &\Br^+_2 & \ar[ld] v_0 \ar[rd] \\
& \ar[d]  &  &  &&&&& \ar[d]  &  & \ar[d] \\  
& \ar[d]   &  &  &&&& & \ar[d]   &  & \ar[d] \\  
& \ar[d]   &  &  &&&& & \ar[d]   &  & \ar[d] \\  
&\vdots & &  &&&&&\vdots & & \vdots  }
 \end{xymatrix}
 \end{equation*}
 
 \bigskip

It follows from Theorem \ref{09-05-19d} that there is a unique ground state for $\alpha^{F_1}$ while the set of ground states for $\alpha^{F_2}$ is affinely homeomorphic to the interval $[0,1]$; the state space of $\mathbb C^2$. 
 
\end{example}


\section{The main result}

Let $\beta \in \mathbb R$. A $\beta$-KMS states for a flow $\alpha$ on a unital $C^*$-algebra $A$ is a state $\omega \in S(A)$ such that
$$
\omega(ab) = \omega(b \alpha_{i\beta}(a))
$$
for all elements $a,b$ that are analytic for $\alpha$, or alternatively for all elements $a$ in $A$ and all elements $b$ from a dense $*$-subalgebra of analytic elements for $\alpha$, cf. \cite{BR}. The set of $\beta$-KMS states for $\alpha$ will be denoted by $\KMS_\beta(\alpha)$. For the flow $\alpha^F$ on $\AF(\Br)$, where $\bigcup_n \AF_n(\Br)$ is a dense $*$-subalgebra of $\alpha^F$-analytic elements, a state $\omega$ of $\alpha^F$ is a $\beta$-KMS state iff
\begin{equation}\label{06-08-20a}
\omega\left(E^n_{\mu,\mu'}E^n_{\nu,\nu'}\right) = e^{-\beta(F(\mu) - F(\mu'))} \omega\left(E^n_{\nu,\nu'}E^n_{\mu,\mu'}\right)
\end{equation}
for all $n$ and all $\mu,\mu',\nu,\nu' \in \mathcal P_n$. 

 For the study of $\KMS_\beta(\alpha^F)$ we make use of projective matrix systems as defined in \cite{Th2}. A projective matrix system over $\Br$ is a sequence $A^{(j)}, j =1,2,3,\cdots$, where 
$$
A^{(j)} = \left(A^{(j)}_{v,w}\right)_{(v,w) \in \Br_{j-1} \times \Br_j}
$$
is a non-negative real matrix over $\Br_{j-1} \times \Br_j$ subject to the condition that
\begin{equation}\label{09-05-19h}
\left(A^{(1)}A^{(2)} \cdots A^{(k)}\right)_{v_0,w} \neq 0
\end{equation}
for all $w \in \Br_k$ and all $k \geq 1$. Let $\varprojlim_j A^{(j)}$ be the set of sequences
$$
\left(\psi^j\right)_{j=0}^{\infty} \in \prod_{j=0}^{\infty} [0,\infty)^{\Br_j} 
$$
for which $\psi^{j-1} = A^{(j)}\psi^{j}$ for all $j = 1,2,\cdots$. The set $\varprojlim_j A^{(j)}$ is a locally compact Hausdorff space in the topology inherited from the product topology of $\prod_{j=0}^{\infty} [0,\infty)^{\Br_j} $. 

 When $F$ is a potential defined on $\Br$ and $\beta \in \mathbb R$ a real number we define a projective matrix $\Br(\beta)^{(j)}, j = 1,2,3, \cdots$, over $\Br$ such that
\begin{equation}\label{10-05-19i}
\Br(\beta)^{(j)}_{v,w} \ \ = \sum_{a \in s^{-1}(v) \cap r^{-1}(w)} e^{-\beta F(a)} \ .
\end{equation}
 When $\omega \in \KMS_\beta(\alpha^F)$, define vectors $\psi(\omega)^j  \in [0,\infty)^{\Br_j}, \ j \geq 1$, such that
$$
\psi(\omega)^j_v \ = \ e^{\beta F(\mu)} \omega\left(E^j_{\mu,\mu}\right) \ ,
$$
where $\mu \in \mathcal P_j$ terminates at $v$; i.e. $r(\mu) = v$. The value $\psi(\omega)^j_v$ is independent of which $\mu \in \mathcal P_j$ terminating at $v$ we use; indeed, if $\mu'\in \mathcal P_j$ and $r( \mu') = r(\mu)$, it folllows from \eqref{06-08-20a}that
\begin{align*}
& e^{\beta F(\mu)} \omega\left(E^j_{\mu,\mu}\right) =  e^{\beta F(\mu)} \omega\left(E^j_{\mu,\mu'}E^j_{\mu',\mu}\right) \\
&=  e^{\beta F(\mu)}e^{-\beta(F(\mu)-F(\mu'))} \omega\left(E^j_{\mu',\mu}E^j_{\mu,\mu'}\right) = e^{\beta F(\mu')} \omega\left(E^j_{\mu',\mu'}\right) \ .
\end{align*}
Furthermore, when $v \in \Br_{j-1}$ and $\mu_v \in \mathcal P_{j-1}$ is a path terminating at $v$, $j \geq 2$, we have that 
\begin{align*}
&\sum_{w \in \Br_j} \Br(\beta)^{(j)}_{v,w}\psi(\omega)^j_w \ = \ \sum_{w \in \Br_j}\sum_{a \in s^{-1}(v) \cap r^{-1}(w)} e^{-\beta F(a)} \psi(\omega)^j_w \\
&= \sum_{w \in \Br_j}\sum_{a \in s^{-1}(v) \cap r^{-1}(w)} e^{-\beta F(a)} e^{\beta F(\mu_va)} \omega\left(E^j_{\mu_va,\mu_va}\right) \\
&=  e^{\beta F(\mu_v)} \sum_{b \in s^{-1}(v)}\omega\left(E^j_{\mu_vb,\mu_vb}\right) \ = \ e^{\beta F(\mu_v)}\omega(E^{j-1}_{\mu_v}) = \psi(\omega)^{j-1}_v \ .
\end{align*}
Set $\psi( \omega)^0 = \Br(\beta)^{(1)}\psi(\omega)^1$. Then $\psi(\omega) =\left(\psi(\omega)^j\right)_{j=0}^{\infty}$ is an element of the inverse limit space $\varprojlim_j \Br(\beta)^{(j)}$. As shown in Proposition 3.4 of \cite{Th2} the map $\omega \mapsto \psi(\omega)$ gives rise to a homeomorphism:

\begin{lemma}\label{10-05-19} The map $\KMS_\beta(\alpha^F) \ni \omega \mapsto \left(\psi(\omega)^j\right)_{j=0}^{\infty}$ is an affine homeomorphism onto
$$
\left\{ \left(\psi^j\right)_{j=0}^{\infty}   \in \varprojlim_j \Br(\beta)^{(j)} : \ \psi^0 = 1\right\} \ .
$$
\end{lemma}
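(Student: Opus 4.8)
The plan is to show that the map $\omega \mapsto \psi(\omega)$ is an affine continuous bijection onto the stated set and then upgrade it to a homeomorphism by a compactness argument. That it is affine is immediate, since for fixed $j$ and $v$ the assignment $\omega \mapsto \psi(\omega)^j_v = e^{\beta F(\mu)}\omega(E^j_{\mu,\mu})$ is affine in $\omega$, being a positive scalar times the evaluation at the fixed element $E^j_{\mu,\mu}$; likewise $\psi(\omega)^0 = \Br(\beta)^{(1)}\psi(\omega)^1$ depends affinely on $\omega$. Continuity from the weak${}^*$ topology to the product topology is equally direct: if $\omega_i \to \omega$ weak${}^*$ then $\omega_i(E^j_{\mu,\mu}) \to \omega(E^j_{\mu,\mu})$ for every $j$ and $\mu$, which is exactly coordinatewise convergence of $\psi(\omega_i)$ to $\psi(\omega)$. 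The computations carried out before the statement already show that $\psi(\omega)$ lies in $\varprojlim_j \Br(\beta)^{(j)}$; the normalization $\psi(\omega)^0 = 1$ is the remaining point, and it follows from $\psi(\omega)^0_{v_0} = \sum_{a \in s^{-1}(v_0)} \omega(E^1_{a,a}) = \omega(P_{v_0}) = 1$, using that $P_{v_0}$ is the unit of $\AF(\Br)$.

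For injectivity the key observation is that the KMS condition \eqref{06-08-20a} forces all off-diagonal matrix elements to vanish. Indeed, applying \eqref{06-08-20a} with $\nu = \nu' = \mu'$ gives $\omega(E^n_{\mu,\mu'}) = e^{-\beta(F(\mu)-F(\mu'))}\omega(E^n_{\mu',\mu'}E^n_{\mu,\mu'})$, and $E^n_{\mu',\mu'}E^n_{\mu,\mu'} = 0$ whenever $\mu \neq \mu'$, so $\omega(E^n_{\mu,\mu'}) = 0$ for $\mu \neq \mu'$. Hence the restriction of $\omega$ to each $\AF_n(\Br)$ is diagonal in the matrix units and is completely determined by the values $\omega(E^n_{\mu,\mu}) = e^{-\beta F(\mu)}\psi(\omega)^n_{r(\mu)}$. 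Since $\bigcup_n \AF_n(\Br)$ is dense in $\AF(\Br)$, the state $\omega$ is determined by $\psi(\omega)$, so the map is injective.

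The substantial step is surjectivity. Given $(\psi^j)_j \in \varprojlim_j \Br(\beta)^{(j)}$ with $\psi^0 = 1$, I would define a functional $\omega_n$ on $\AF_n(\Br)$ by declaring it diagonal with $\omega_n(E^n_{\mu,\mu'}) = \delta_{\mu,\mu'} e^{-\beta F(\mu)} \psi^n_{r(\mu)}$. Positivity is automatic because a functional on a matrix algebra that is diagonal with nonnegative diagonal entries is positive, and here $\psi^n_{r(\mu)} \geq 0$. Normalization $\omega_n(P_{v_0}) = 1$ follows by iterating the inverse-limit relation $\psi^{j-1} = \Br(\beta)^{(j)}\psi^j$ down to level $0$: the telescoped product $\Br(\beta)^{(1)} \cdots \Br(\beta)^{(n)}$ has $(v_0,w)$-entry $\sum_{\mu \in \mathcal P_n,\, r(\mu) = w} e^{-\beta F(\mu)}$, so $\omega_n(P_{v_0}) = \sum_{\mu \in \mathcal P_n} e^{-\beta F(\mu)}\psi^n_{r(\mu)} = \psi^0_{v_0} = 1$. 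Compatibility $\omega_{n+1}|_{\AF_n(\Br)} = \omega_n$ follows by expanding $E^n_{\mu,\mu}$ in the larger system and using the single relation $\Br(\beta)^{(n+1)}\psi^{n+1} = \psi^n$ on the diagonal, while off-diagonal terms remain off-diagonal after the inclusion and are annihilated by both functionals. The $\omega_n$ therefore glue to a state $\omega$ on $\AF(\Br)$, and since $\omega$ is diagonal, checking \eqref{06-08-20a} reduces to the identity $e^{-\beta F(\mu)}\psi^n_{r(\mu)} = e^{-\beta(F(\mu)-F(\mu'))}e^{-\beta F(\mu')}\psi^n_{r(\mu')}$ for $r(\mu) = r(\mu')$, which holds trivially; thus $\omega \in \KMS_\beta(\alpha^F)$ and by construction $\psi(\omega) = (\psi^j)_j$.

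Finally, to promote the continuous affine bijection to a homeomorphism I would invoke compactness: $\KMS_\beta(\alpha^F)$ is weak${}^*$-closed in the state space (the relations \eqref{06-08-20a} pass to weak${}^*$ limits) and hence compact, while the target set is Hausdorff, so the inverse is automatically continuous. The main obstacle I anticipate is the surjectivity step, and within it specifically the verification that the candidate family $(\omega_n)$ is consistent across levels and genuinely satisfies the KMS relation; this is precisely where the defining compatibility $\psi^{j-1} = \Br(\beta)^{(j)}\psi^j$ of the inverse-limit system is used. The positivity and normalization, by contrast, are routine.
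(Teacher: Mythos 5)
Your proof is correct. Note that the paper does not actually prove this lemma: it defers to Proposition 3.4 of \cite{Th2} and only carries out the forward computation (that $\psi(\omega)$ satisfies $\psi(\omega)^{j-1}=\Br(\beta)^{(j)}\psi(\omega)^j$, and that $\psi(\omega)^j_v$ is independent of the choice of $\mu\in\mathcal P_j$ with $r(\mu)=v$). Your argument supplies the missing content along what is surely the intended route. The three key steps all check out: (i) injectivity via the observation that taking $\nu=\nu'=\mu'$ in \eqref{06-08-20a} forces $\omega(E^n_{\mu,\mu'})=0$ for $\mu\neq\mu'$ (valid whether or not $F(\mu)=F(\mu')$, since $E^n_{\mu',\mu'}E^n_{\mu,\mu'}=0$), so $\omega$ is determined on the dense subalgebra $\bigcup_n\AF_n(\Br)$ by its diagonal values; (ii) surjectivity by the diagonal reconstruction $\omega_n(E^n_{\mu,\mu'})=\delta_{\mu,\mu'}e^{-\beta F(\mu)}\psi^n_{r(\mu)}$, whose positivity, normalization $\omega_n(P_{v_0})=\psi^0_{v_0}=1$, level-to-level consistency via $\psi^n=\Br(\beta)^{(n+1)}\psi^{n+1}$, and verification of \eqref{06-08-20a} (using $r(\mu)=r(\mu')$ for $(\mu,\mu')\in\mathcal P^{(2)}_n$) are all as you describe; (iii) the upgrade to a homeomorphism from weak* compactness of $\KMS_\beta(\alpha^F)$, which is closed in $S(\AF(\Br))$ because \eqref{06-08-20a} is a family of weak*-closed conditions. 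The only implicit dependence worth flagging is that you use the paper's stated equivalence between the KMS condition and \eqref{06-08-20a} in both directions, and the well-definedness computation for $\psi(\omega)^j_v$ from the preamble; both are legitimately available to you.
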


This lemma makes it possible to control the KMS-states when manipulating Bratteli diagrams and generalized gauge actions. For such purposes we need some preparations.

\begin{lemma}\label{22-11-18a} Let $\Br$ be a Bratteli diagram and $\{ A^{(j)}\}$ a projective matrix system over $\Br$. Let $\{m_j\}_{j=1}^{\infty}$ be a sequence of positive numbers and set $B^{(j)} = m_jA^{(j)}$. Then $\varprojlim_j B^{(j)}$ is affinely homeomorphic to $\varprojlim_j A^{(j)}$.
\end{lemma}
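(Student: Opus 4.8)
The plan is to exhibit an explicit coordinatewise rescaling that intertwines the two compatibility relations. First I would record that $\{B^{(j)}\}$ is again a projective matrix system: each $B^{(j)} = m_j A^{(j)}$ is a non-negative matrix over $\Br_{j-1}\times\Br_j$, and since $m_j > 0$ one has $(B^{(1)}\cdots B^{(k)})_{v_0,w} = \left(\prod_{i=1}^k m_i\right)(A^{(1)}\cdots A^{(k)})_{v_0,w} \neq 0$, so the positivity condition \eqref{09-05-19h} persists and $\varprojlim_j B^{(j)}$ is indeed defined.

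The key step is to look for a map of the form $(\psi^j)_j \mapsto (c_j\psi^j)_j$ for suitable positive scalars $c_j$, chosen so as to turn the $A$-relation $\psi^{j-1} = A^{(j)}\psi^j$ into the $B$-relation. Substituting $\phi^j = c_j\psi^j$ into $\phi^{j-1} = B^{(j)}\phi^j = m_j c_j A^{(j)}\psi^j = m_j c_j \psi^{j-1}$ forces $c_{j-1} = m_j c_j$ for all $j \geq 1$. Setting $c_0 = 1$, this recursion is solved by $c_j = \left(\prod_{i=1}^j m_i\right)^{-1}$, and I would then define
$$
\Phi\left((\psi^j)_{j=0}^\infty\right) = (c_j\psi^j)_{j=0}^\infty .
$$

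With the $c_j$ so chosen, the computation above shows that $\Phi$ sends $\varprojlim_j A^{(j)}$ into $\varprojlim_j B^{(j)}$ (each $c_j\psi^j$ still lies in $[0,\infty)^{\Br_j}$ because $c_j>0$), and its inverse is the analogous rescaling by $c_j^{-1} = \prod_{i=1}^j m_i$, so $\Phi$ is a bijection. Because $\Phi$ acts in each slot by multiplication with a fixed positive constant, it is visibly affine (indeed linear) and continuous for the product topology, as is its inverse; hence $\Phi$ is the desired affine homeomorphism. I do not anticipate a genuine obstacle here: the only point requiring care is to keep track of the index shift so that the recursion $c_{j-1}=m_j c_j$ is solved in the correct direction, and to note that the choice $c_0=1$ makes $\Phi$ preserve the zeroth coordinate, so that it also restricts to a homeomorphism between the normalized slices $\{\psi^0 = 1\}$ appearing in Lemma \ref{10-05-19}.
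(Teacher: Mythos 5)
Your proposal is correct and is essentially the paper's own argument: the paper defines the rescaling $\Phi\left((x_j)_j\right) = \left(\left(\prod_{i=1}^j m_i\right) x_j\right)_j$ from $\varprojlim_j B^{(j)}$ to $\varprojlim_j A^{(j)}$, which is exactly the inverse of the map you construct. The direction of the map and the level of detail are the only (immaterial) differences.
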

\begin{proof} Define $\Phi : \varprojlim_j B^{(j)} \ \to \ \varprojlim_j A^{(j)}$ such that
$$
\Phi\left((x_i)_{i=0}^{\infty}\right) = \left(x_0,\ m_1x_1, \ m_2m_1x_2,\ m_3m_2m_1 x_3, \ \cdots \right) \ .
$$
\end{proof}

The following is a variant of Lemma 5.2 in \cite{Th2} which is needed for the present purposes. The norms used in the statement and proof are the Euclidean norms (or $l^2$-norms) on vectors, and on matrices it is the corresponding operator norm.

\begin{lemma}\label{09-05-19g} Let $\Br$ be a Bratteli diagram and $\{ A^{(j)}\}$ a projective matrix system over $\Br$. Let $\left\{\epsilon_j\right\}_{j=1}^\infty$ be a sequence of positive numbers in the interval $\left]0,\frac{1}{2}\right[$ such that
\begin{equation}\label{07-08-20} \epsilon_j \sqrt{\# \Br_j}  \left(\left(A^{(1)}A^{(2)} \cdots A^{(j)}\right)_{v_0,w} \right)^{-1} \prod_{k=1}^j \left(2\left\|A^{(k)}\right\| +1 \right) \ \leq \ 4^{-j}
\end{equation}
 for all $w \in \Br_j$. When $\{ B^{(j)}\}$ is another projective matrix system over $\Br$ such that for some $N \in \mathbb N$,
\begin{equation}\label{09-05-19k}
\left| A^{(j)}_{v,w} - B^{(j)}_{v,w}\right| \ \leq \ \epsilon_j A^{(j)}_{v,w} \ \ \ \forall v \in \Br_{j-1}, \ \forall w \in \Br_j,
\end{equation}
when $j \geq N$, then $\varprojlim_j A^{(j)}$ is affinely homeomorphic to $\varprojlim_j B^{(j)}$.

\end{lemma}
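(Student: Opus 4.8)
The plan is to construct explicit, mutually inverse affine homeomorphisms between $\varprojlim_j A^{(j)}$ and $\varprojlim_j B^{(j)}$ by ``re-summing'' a coherent sequence of one system along the matrices of the other. Write $A[j,k] = A^{(j+1)}A^{(j+2)}\cdots A^{(k)}$ for $k>j$ and $A[j,j] = I$, and similarly for $B$; these are nonnegative matrices, and a coherent sequence $(\psi^j) \in \varprojlim_j A^{(j)}$ satisfies $\psi^j = A[j,k]\psi^k$ for all $k \ge j$. The whole argument rests on one a priori bound: if $(\psi^j) \in \varprojlim_j A^{(j)}$ and $c = \psi^0_{v_0}$, then from $\psi^0 = (A^{(1)}\cdots A^{(k)})\psi^k$ and nonnegativity of all entries one gets $c \ge (A^{(1)}\cdots A^{(k)})_{v_0,w}\,\psi^k_w$ for every $w$, hence
\[
\|\psi^k\| \ \le\ c\,\sqrt{\#\Br_k}\,\max_{w \in \Br_k}\bigl((A^{(1)}\cdots A^{(k)})_{v_0,w}\bigr)^{-1}.
\]
This is precisely the quantity controlled by the technical hypothesis \eqref{07-08-20}.

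Next I would define the candidate map $\Phi(\psi)^j = \lim_{k\to\infty} B[j,k]\psi^k$ and prove the limit exists by telescoping. Using $\psi^k = A^{(k+1)}\psi^{k+1}$ one has
\[
B[j,k+1]\psi^{k+1} - B[j,k]\psi^k \ =\ B[j,k]\bigl(B^{(k+1)} - A^{(k+1)}\bigr)\psi^{k+1}.
\]
Since the operator norm is monotone on nonnegative matrices, \eqref{09-05-19k} gives $\|B^{(k+1)}-A^{(k+1)}\| \le \epsilon_{k+1}\|A^{(k+1)}\|$, while $\|B^{(i)}\| \le (1+\epsilon_i)\|A^{(i)}\| \le 2\|A^{(i)}\|+1$ because $\epsilon_i < \tfrac12$. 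Bounding $\|B[j,k]\|$ factorwise and inserting the a priori bound for $\|\psi^{k+1}\|$, the norm of the difference is at most
\[
c\,\epsilon_{k+1}\sqrt{\#\Br_{k+1}}\,\max_{w}\bigl((A^{(1)}\cdots A^{(k+1)})_{v_0,w}\bigr)^{-1}\prod_{i=1}^{k+1}\bigl(2\|A^{(i)}\|+1\bigr)\ \le\ c\,4^{-(k+1)}
\]
for $k+1 \ge N$, by \eqref{07-08-20}. The differences are therefore summable, the sequence $(B[j,k]\psi^k)_k$ is Cauchy, and $\Phi(\psi)^j$ exists and is nonnegative; applying $B^{(j)}$ and passing to the limit gives $B^{(j)}\Phi(\psi)^j = \Phi(\psi)^{j-1}$, so $\Phi(\psi) \in \varprojlim_j B^{(j)}$.

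For the reverse map I would first observe that \eqref{07-08-20} forces $\epsilon_j \le 4^{-j}$: indeed $(A^{(1)}\cdots A^{(j)})_{v_0,w} \le \prod_{i=1}^j(2\|A^{(i)}\|+1)$ and $\sqrt{\#\Br_j}\ge 1$, so the product of the remaining factors on the left of \eqref{07-08-20} is $\ge 1$. Consequently $\prod_i(1-\epsilon_i) > 0$, and from $B^{(i)}_{v,w} \ge (1-\epsilon_i)A^{(i)}_{v,w}$ one gets $(B^{(1)}\cdots B^{(k)})_{v_0,w} \ge \prod_i(1-\epsilon_i)\,(A^{(1)}\cdots A^{(k)})_{v_0,w}$; this yields the analogue of the a priori bound for coherent sequences of the $B$-system, with one extra constant factor $\prod_i(1-\epsilon_i)^{-1}$. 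The identical telescoping argument then defines $\Psi(\phi)^j = \lim_k A[j,k]\phi^k \in \varprojlim_j A^{(j)}$.

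It remains to check that $\Phi$ and $\Psi$ are mutually inverse, affine, and continuous; I expect the mutual-inverse step to be the main obstacle, since it is an interchange of two limits. To show $\Psi\circ\Phi = \id$, fix $\psi$, set $\phi = \Phi(\psi)$, and write $A[j,k]\phi^k - \psi^j = A[j,k](\phi^k - \psi^k)$ using $\psi^j = A[j,k]\psi^k$. Expanding $\phi^k - \psi^k$ by the same telescoping as before and bounding the mixed products $A[j,k]B[k,m]$ factorwise by $\prod_{i=1}^m(2\|A^{(i)}\|+1)$, each summand is again at most $c\,4^{-(m+1)}$, so $\|A[j,k]\phi^k - \psi^j\| \le c\sum_{m\ge k}4^{-(m+1)} \to 0$ and $\Psi(\Phi(\psi))^j = \psi^j$; the identity $\Phi\circ\Psi = \id$ follows symmetrically (carrying the harmless factor $\prod_i(1-\epsilon_i)^{-1}$). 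Finally $\Phi$ is linear, hence affine, and it is continuous: each map $\psi \mapsto B[j,k]\psi^k$ depends only on the coordinate $\psi^k$ and so is continuous in the product topology, and since $\psi \mapsto \psi^0_{v_0}$ is continuous, every point has a neighbourhood on which $\psi^0_{v_0}$ is bounded, where the estimate above makes the convergence to $\Phi(\cdot)^j$ uniform; a uniform limit of continuous maps is continuous. By symmetry $\Psi$ is continuous too, so $\Phi$ is the desired affine homeomorphism.
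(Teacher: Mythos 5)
Your proposal is correct and follows essentially the same route as the paper: both define the two maps by $\lim_k$ of products of the other system's matrices applied to the tail of a coherent sequence, establish convergence by telescoping against the entrywise a priori bound $\psi^k_w \le \psi^0_{v_0}\bigl((A^{(1)}\cdots A^{(k)})_{v_0,w}\bigr)^{-1}$ that hypothesis \eqref{07-08-20} is designed to control, and verify mutual inversion and continuity from the resulting uniform tail estimates. The only point to tidy is that \eqref{09-05-19k} gives no comparison between $A^{(i)}$ and $B^{(i)}$ for the finitely many levels $i<N$, so your lower bound $(B^{(1)}\cdots B^{(k)})_{v_0,w}\ge\prod_i(1-\epsilon_i)(A^{(1)}\cdots A^{(k)})_{v_0,w}$ and the factorwise bounds on $\Vert B^{(i)}\Vert$ must each absorb one additional constant depending on the first $N$ matrices (the paper's $K_N$), which changes nothing in the convergence arguments.
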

\begin{proof} The proof is essentially the same as the proof of Lemma 5.2 in \cite{Th2}, but the assumptions are sufficiently different to justify that we present the details. Note that it follows from \eqref{09-05-19k} that
$$
B^{(j)}_{v,w} \ \geq \ (1-\epsilon_j)A^{(j)}_{v,w} \ \geq \  \frac{1}{2}A^{(j)}_{v,w}
$$
for all $v\in \Br_{j-1},w \in \Br_j$ when $j \geq N$. There is therefore a positive number $K_N$, depending on $A^{(i)}$ and $B^{(i)}$ for $ i=1,2,\cdots, N$, such that 
$$
\left(\left(B^{(1)}B^{(2)} \cdots B^{(j)}\right)_{v_0,w} \right)^{-1} \leq K_N 2^j \left(\left(A^{(1)}A^{(2)} \cdots A^{(j)}\right)_{v_0,w} \right)^{-1}
$$
for all $w \in \Br_j$ when $j \geq N$. Let $\phi \in \varprojlim_j B^{(j)}$. Then
\begin{equation*}\label{06-08-20b}
\begin{split}
&\left\|\phi^j\right\| \leq \sqrt{\# \Br_j}  \max_{w \in \Br_j}  \left(\left(B^{(1)}B^{(2)} \cdots B^{(j)}\right)_{v_0,w} \right)^{-1} \phi^0 \\
&\leq K_N2^j  \sqrt{\# \Br_j}\max_{w \in \Br_j} \left(\left(A^{(1)}A^{(2)} \cdots A^{(j)}\right)_{v_0,w} \right)^{-1} \phi^0 \ 
\end{split}
\end{equation*}
for all $j \geq N$. Note that \eqref{09-05-19k} implies that $\left\|A^{(j)} - B^{(j)}\right\| \leq \epsilon_j \left\|A^{(j)}\right\|$. Hence, when $k,j\in \mathbb N$ and $j\geq N$, we find that
\begin{equation}\label{20-09-18dbxx}
\begin{split}
& \left\|A^{(j)}A^{(j+1)} \cdots A^{(j+k)}\phi^{j+k} - A^{(j)}A^{(j+1)} \cdots A^{(j+k+1)}\phi^{j+k+1} \right\| \\
& = \left\|A^{(j)}A^{(j+1)} \cdots A^{(j+k)}B^{(j+k+1)}\phi^{j+k+1} - A^{(j)}A^{(j+1)} \cdots A^{(j+k+1)}\phi^{j+k+1} \right\| \\
& \leq \left\| A^{(j)}A^{(j+1)} \cdots A^{(j+k)}\right\| \left\| B^{(j+k+1)}\phi^{j+k+1}  - A^{(j+k+1)}\phi^{j+k+1}\right\| \\
& \leq \left(\prod_{n=j}^{j+k}\left\|A^{(n)}\right\|\right)  \left\| B^{(j+k+1)}\phi^{j+k+1}  - A^{(j+k+1)}\phi^{j+k+1}\right\| \\
& \leq \left(\prod_{n=j}^{j+k}\left\|A^{(n)}\right\|\right)\left\|A^{(j+k+1)}\right\| \left\|\phi^{j+k+1}\right\|\epsilon_{j+k+1} \\
& \leq \left(\prod_{n=j}^{j+k+1}\left\|A^{(n)}\right\|\right)K_N 2^{j+k+1}\sqrt{\# \Br_{j+k+1}} \max_{w \in \Br_{j+k+1}}  \left(\left(A^{(1)} \cdots A^{(j+k+1)}\right)_{v_0,w} \right)^{-1} \phi^0 \epsilon_{j+k+1} \\
& \leq K_N 2^{-j-k-1} \phi^0 \ . 
\end{split}
\end{equation}
It follows that
$$
\left\{ A^{(j)}A^{(j+1)} \cdots A^{(j+k)}\phi^{j+k}\right\}_{k=1}^{\infty}
$$
is a Cauchy-sequence in $\mathbb R^{\Br_{j-1}}$ for all $j \in \mathbb N$ and we set
$$
\psi^{j-1} = \lim_{k \to \infty} A^{(j)}A^{(j+1)} \cdots A^{(j+k)}\phi^{j+k} \ ,
$$
$j = 1,2,3, \cdots $. Note that  $\psi = (\psi^j)_{j=0}^{\infty} \in \varprojlim_j A^{(j)}$. We define an affine map $S : \varprojlim_j B^{(j)} \to \varprojlim_j A^{(j)}$ such that $S\phi = \psi$.  It follows from \eqref{20-09-18dbxx} that
$$
\left\|\psi^{j-1} -  A^{(j)}A^{(j+1)} \cdots A^{(j+k)}\phi^{j+k}\right\| \leq K_N \phi^0\sum_{n=j+k+1}^{\infty} 2^{-n} \ 
$$
for all $j \geq N$ and all $k$, implying that $S$ is continuous.

We proceed in a similar way to construct an inverse for $S$. Let $\psi = (\psi^j)_{j=0}^{\infty} \in \varprojlim_j A^{(j)}$. It follows from \eqref{09-05-19k} that $\left\|B^{(j)}\right\| \leq (1+\epsilon_j)\left\|A^{(j)}\right\| \leq 2\left\|A^{(j)}\right\|$ when $j \geq N$. Then, for $j,k\in \mathbb N,\ j \geq N$, estimates similar to the preceding now yields that
\begin{equation}\label{20-09-18db}
\begin{split}
& \left\|B^{(j)}B^{(j+1)} \cdots B^{(j+k)}\psi^{j+k} - B^{(j)}B^{(j+1)} \cdots B^{(j+k+1)}\psi^{j+k+1} \right\| \\
&\\
&  \leq \ \left(\prod_{n=j}^{j+k+1} 2\|A^{(n)}\|\right) \sqrt{\# \Br_{j+k+1}}  \max_{w \in \Br_{j+k+1}}  \left(\left(A^{(1)} \cdots A^{(j+k+1)}\right)_{v_0,w} \right)^{-1} \psi^0 \epsilon_{j+k+1}\\
&\\
& \leq \ \psi^0 4^{-j-k-1} \ .
\end{split}
\end{equation}
It follows that $\left\{ B^{(j)}B^{(j+1)} \cdots B^{(j+k)}\psi^{j+k}\right\}_{k=1}^{\infty}$
is a Cauchy-sequence in $\mathbb R^{\Br_{j-1}}$ for all $j$ and we set 
$$
\phi^{j-1} = \lim_{k \to \infty} B^{(j)}B^{(j+1)} \cdots B^{(j+k)}\psi^{j+k} \ ,
$$
$j = 1,2,3, \cdots $. Then $\phi = (\phi^j)_{j=0}^{\infty} \in \varprojlim_j B^{(j)}$ and the assignment $T \psi = \phi$ gives us an affine map $T : \varprojlim_j A^{(j)} \to \varprojlim_j B^{(j)}$. It follows from \eqref{20-09-18db} that
$$
\left\|\phi^{j-1} -  B^{(j)}B^{(j+1)} \cdots B^{(j+k)}\psi^{j+k}\right\| \leq\psi^0\sum_{n=j+k+1}^{\infty} 4^{-n} \ 
$$
for all $j \geq N$ and $k \in \mathbb N$, implying that $T$ is continuous. 

To see that $S \circ T$ is the identity on $\varprojlim_j A^{(j)}$, let $\psi \in \varprojlim_j A^{(j)}$ and consider $j,k, m \in \mathbb N$, $j \geq N$. Then
\begin{equation*}
\begin{split}
&\left\| A^{(j)}\cdots A^{(j+k)}B^{(j+k+1)}\cdots B^{(j+k+m)}\psi^{j + k +m} - \psi^{j-1} \right\| \\
&\\
& = \left\| A^{(j)}\cdots A^{(j+k)}B^{(j+k+1)}\cdots B^{(j+k+m)}\psi^{j + k +m} - A^{(j)}A^{(j+1)}\cdots A^{(j+k+m)}\psi^{j + k +m} \right\| \\
&\\
&\leq \left(\prod_{n=j}^{j+k} \left\|A^{(n)}\right\|\right)\left\| B^{(j+k+1)}\cdots B^{(j+k+m)}\psi^{j + k +m} - \psi^{j+k} \right\| \\
&\\
& \leq \left(\prod_{n=j}^{j+k} \left\|A^{(n)}\right\|\right)\left\| B^{(j+k+1)}\cdots B^{(j+k+m-1)}\left(B^{(j+k+m)}\psi^{j + k +m} -  A^{(j+k+m)}\psi^{j + k +m}\right) \right\| \\
& \ \ \ \ \ \ \ \ \ \ \ \ \ \ \ \ + \left(\prod_{n=j}^{j+k} \left\|A^{(n)}\right\|\right) \left\| B^{(j+k+1)}\cdots B^{(j+k+m-1)}A^{(j+k+m)}\psi^{j + k +m} - \psi^{j+k} \right\|\\
&\\
&  = \left(\prod_{n=j}^{j+k} \left\|A^{(n)}\right\|\right)\left( \prod_{n=j+k+1}^{j+k+m-1} \left\|B^{(n)}\right\|\right) \left\| B^{(j+k+m)}\psi^{j + k +m} -  A^{(j+k+m)}\psi^{j + k +m} \right\| \\
& \ \ \ \ \ \ \ \ \ \ \ \ \ \ \ \ + \prod_{n=j}^{j+k} \left\|A^{(n)}\right\| \left\| B^{(j+k+1)}\cdots B^{(j+k+m-1)}\psi^{j + k +m-1} - \psi^{j+k} \right\|\\
&\\
&  \leq \left(\prod_{n=j}^{j+k} \left\|A^{(n)}\right\|\right) \left(\prod_{n=j+k+1}^{j+k+m-1} \left\|B^{(n)}\right\|\right) \left\|A^{(j+k+m)}\right\|  \left\| \psi^{j + k +m} \right\|\epsilon_{j+k+m} \\
& \ \ \ \ \ \ \ \ \ \ \ \ \ \ \ \ + \prod_{n=j}^{j+k} \left\|A^{(n)}\right\| \left\| B^{(j+k+1)}\cdots B^{(j+k+m-1)}\psi^{j + k +m-1} - \psi^{j+k} \right\|\\
&\\
& \leq \left(\prod_{n=j}^{j+k+m}  2\left\|A^{(n)}\right\|\right)  \left\| \psi^{j + k +m} \right\|\epsilon_{j+k+m} \\
& \ \ \ \ \ \ \ \ \ \ \ \ \ \ \ \ + \prod_{n=j}^{j+k} \left\|A^{(n)}\right\| \left\| B^{(j+k+1)}\cdots B^{(j+k+m-1)}\psi^{j + k +m-1} - \psi^{j+k} \right\|\\
&\\
& \leq 4^{-j-k-m} + \prod_{n=j}^{j+k} \left\|A^{(n)}\right\| \left\| B^{(j+k+1)}\cdots B^{(j+k+m-1)}\psi^{j + k +m-1} - \psi^{j+k} \right\|\\
&\\
& \leq 4^{-j-k-m} + 4^{-j-k -m+1}+ \prod_{n=j}^{j+k} \left\|A^{(n)}\right\| \left\| B^{(j+k+1)}\cdots B^{(j+k+m-2)}\psi^{j + k +m-2} - \psi^{j+k} \right\|\\
&\\
& \leq \ \ \cdots \\
&\\
& \leq \sum_{n=j+k}^{j+k+m} 4^{-n} \ .
\end{split}
\end{equation*}
Letting $m \to \infty$ the above estimate shows that
$$
\left\| A^{(j)}\cdots A^{(j+k)}(T\psi)^{j+k} - \psi^{j-1} \right\| \leq 4^{-j-k+1} \ ,
$$
so when we subsequently let $k \to \infty$ we find that $(ST\psi)^{j-1} = \psi^{j-1}$, and hence that $ST\psi = \psi$. We leave the reader to use the same method to show that $T\circ S$ is the identity on $\varprojlim_j B^{(j)}$.
\end{proof}

 The set of arrows in a Bratteli diagram $\Br$ can be described by the multiplicity matrices $\Br^{(j)}, j = 1,2,3, \cdots$, where $\Br^{(j)}$ is the matrix over $\Br_j \times \Br_{j-1}$ defined such that 
$$
\Br^{(j)}_{v,w} = \# r^{-1}(v) \cap s^{-1}(w) \ .
$$
By using multiplicity matrices we can quickly introduce an operation on Bratteli diagrams called telescoping. For the present purposes we need the observation that generalized gauge actions behave nicely with respect to telescoping. Let $\Br$ be a Bratteli diagram and $F : \Br_{Ar} \to \mathbb R$ a potential on $\Br$. Let $0 = k_0 <  k_1 < k_2 < \cdots$ be a strictly increasing sequence of natural numbers. Let $\Br'$ be the Bratteli diagram with level sets
$$
\Br'_j \ = \ \Br_{k_j} 
$$
and multiplicity matrices
$$
{\Br'}^{(j)} \ = \ \Br^{(k_j)}\Br^{(k_j-1)}\Br^{(k_j -2)} \cdots \Br^{(k_{j-1} + 1)} \ .
$$
The arrows in $\Br'$ from ${\Br'}_{j-1}$ to ${\Br'}_{j}$ can then be identified with the set of paths in $\Br$ from $\Br_{k_{j-1}}$ to $\Br_{k_j}$ and we define a potential $F': \Br'_{Ar} \to \mathbb R$ such that
$$
F'(a) = F(\mu_a) \ ,
$$
where $\mu_a$ is the path in $\Br$ corresponding to $a \in \Br'_{Ar}$. We say that the pair $(\Br',F')$ is obtained from $(\Br,F)$ by telescoping to $k_1 < k_2  < \cdots$. It is then straightforward to prove the following

\begin{lemma}\label{13-05-19} Assume that $(\Br',F')$ is obtained from $(\Br,F)$ by telescoping. The flow $\alpha^{F'}$ on $\AF(\Br')$ is conjugate to the flow $\alpha^F$ on $\AF(\Br)$.
\end{lemma}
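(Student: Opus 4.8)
The plan is to exhibit an explicit $*$-isomorphism $\Phi : \AF(\Br') \to \AF(\Br)$ built level by level from matrix units, and to check that it intertwines the two flows. The starting point is the observation that an arrow $a$ in $\Br'$ from $\Br'_{j-1}$ to $\Br'_j$ is, by construction, a path $\mu_a$ in $\Br$ from $\Br_{k_{j-1}}$ to $\Br_{k_j}$. Concatenating the arrows of a path $\mu = a_1 a_2 \cdots a_j \in \mathcal P'_j$ in $\Br'$ emitted from $v_0$ therefore produces a path $\tilde\mu = \mu_{a_1}\mu_{a_2}\cdots\mu_{a_j}$ in $\Br$ emitted from $v_0$ of length $k_j$ (using $k_0 = 0$, so $\Br_{k_0} = \{v_0\}$), and the condition $r(a_i) = s(a_{i+1})$ in $\Br'$ is exactly the condition $r(\mu_{a_i}) = s(\mu_{a_{i+1}})$ needed for $\tilde\mu$ to be a genuine $\Br$-path. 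Decomposing an arbitrary $\Br$-path of length $k_j$ into its segments of lengths $k_i - k_{i-1}$ shows that $\mu \mapsto \tilde\mu$ is a bijection $\mathcal P'_j \to \mathcal P_{k_j}$, and under the identification $\Br'_j = \Br_{k_j}$ it preserves the range map, $r(\mu) = r(\tilde\mu)$.

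First I would use this bijection to define, for each $j$, a $*$-isomorphism $\Phi_j : \AF_j(\Br') \to \AF_{k_j}(\Br)$ by $\Phi_j({E'}^{j}_{\mu,\mu'}) = E^{k_j}_{\tilde\mu,\tilde\mu'}$ on matrix units, where ${E'}^{j}$ denotes the matrix units of $\Br'$. Since $\mu \mapsto \tilde\mu$ preserves ranges it carries ${\mathcal P'}^{(2)}_j$ onto $\mathcal P^{(2)}_{k_j}$, so $\Phi_j$ is a bijection of matrix-unit systems and hence a $*$-isomorphism. The next step is to verify that these isomorphisms are compatible with the inclusions, i.e.\ that the square with horizontal inclusions $\AF_j(\Br') \subseteq \AF_{j+1}(\Br')$ and $\AF_{k_j}(\Br) \subseteq \AF_{k_{j+1}}(\Br)$ and vertical maps $\Phi_j, \Phi_{j+1}$ commutes. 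This holds precisely because the multiplicity matrices ${\Br'}^{(j)} = \Br^{(k_j)}\cdots\Br^{(k_{j-1}+1)}$ were defined to encode the embedding of $\AF_{k_{j-1}}(\Br)$ into $\AF_{k_j}(\Br)$, so the concatenation map intertwines the two inclusions. As $\{k_j\}$ is cofinal in $\mathbb N$ we have $\AF(\Br) = \overline{\bigcup_j \AF_{k_j}(\Br)}$, and the compatible family $\{\Phi_j\}$ passes to an inductive-limit $*$-isomorphism $\Phi : \AF(\Br') \to \AF(\Br)$.

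It then remains to check that $\Phi$ intertwines the flows. The key identity is the additivity of the potential along paths: since $F'(a) = F(\mu_a)$ for every arrow $a$ of $\Br'$ and $F$ is additive along $\Br$-paths, one gets $F'(\mu) = \sum_i F'(a_i) = \sum_i F(\mu_{a_i}) = F(\tilde\mu)$ for every $\mu \in \mathcal P'_j$. Consequently $\alpha^{F'}_t({E'}^{j}_{\mu,\mu'}) = e^{it(F'(\mu)-F'(\mu'))}{E'}^{j}_{\mu,\mu'}$ is carried by $\Phi_j$ to $e^{it(F(\tilde\mu)-F(\tilde\mu'))}E^{k_j}_{\tilde\mu,\tilde\mu'} = \alpha^F_t(\Phi_j({E'}^{j}_{\mu,\mu'}))$, so $\Phi$ intertwines $\alpha^{F'}$ and $\alpha^F$ on the dense $*$-subalgebra $\bigcup_j \AF_j(\Br')$. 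Since each $\alpha^{F'}_t$ and $\alpha^F_t$ is isometric and $\Phi$ is bounded, this intertwining extends by continuity to all of $\AF(\Br')$ for every $t \in \mathbb R$, which is the asserted conjugacy.

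I do not expect a genuine obstacle here, which is why the author calls the lemma straightforward. The only points demanding care are the bookkeeping that $\mu \mapsto \tilde\mu$ is a range-preserving bijection $\mathcal P'_j \to \mathcal P_{k_j}$ and the commutativity of the level-$j$ squares, which is exactly what the definition of the telescoped multiplicity matrices guarantees; the potential identity $F'(\mu) = F(\tilde\mu)$ and the passage to the inductive limit are then purely formal.
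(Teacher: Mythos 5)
Your argument is correct and is precisely the natural one the paper has in mind when it declares the lemma ``straightforward'' and omits the proof entirely: the range-preserving bijection $\mathcal P'_j \to \mathcal P_{k_j}$, the level-wise matrix-unit isomorphisms compatible with the inclusions, the cofinality of $\{k_j\}$, and the identity $F'(\mu)=F(\tilde\mu)$ are exactly the intended ingredients. Nothing is missing.
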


\begin{lemma}\label{12-08-20} Let $\Br$ be a Bratteli diagram. Let $U$ be a $\UHF$ algebra and $\{m_j\}$ a sequence of natural numbers. There is a Bratteli diagram $\Br'$ such that 
\begin{enumerate}
\item $\Br_V = \Br'_V$, i.e. $\Br'$ has the same set of vertexes as $\Br$, 
\item $\Br'^{(j)}_{v,w} \geq \Br^{(j)}_{v,w} \ + \ m_j$ for all $(v,w) \in \Br_j \times \Br_{j-1}$ and all $j =1,2,3, \cdots$,
 and
\item $\AF(\Br')$ is $*$-isomorphic to $U$.
\end{enumerate}
\end{lemma}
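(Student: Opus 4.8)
The plan is to build $\Br'$ on the same vertex set as $\Br$ but with every multiplicity matrix having \emph{all rows equal}; this forces the blocks of each $\AF_n(\Br')$ to share a common matrix size and renders the $K$-theory of the limit completely transparent. Concretely, for each $n\ge 1$ I would choose positive integers $b^{(n)}_w$, indexed by $w\in\Br_{n-1}$, and declare $\Br'^{(n)}_{v,w}=b^{(n)}_w$ for every $v\in\Br_n$. Imposing $b^{(n)}_w\ge \ell^{(n)}_w:=\max_{v\in\Br_n}\Br^{(n)}_{v,w}+m_n$ yields condition (2) at once, and since $\Br$ has no sinks and $v_0$ as its only source we have $\ell^{(n)}_w\ge 1$, so every entry of $\Br'^{(n)}$ is at least $1$. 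Hence $\Br'$ is fully connected between consecutive levels: it is a genuine Bratteli diagram with the same vertex set as $\Br$ (condition (1)), with $v_0$ as its only source and with no sinks.

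Next I would record the effect on dimensions and $K$-theory. Writing $d^n_v$ for the number of paths from $v_0$ to $v$ of length $n$, so that $\AF_n(\Br')\cong\bigoplus_{v\in\Br_n}M_{d^n_v}$, the recursion $d^n_v=\sum_{w}\Br'^{(n)}_{v,w}d^{n-1}_w=\sum_w b^{(n)}_w d^{n-1}_w$ shows by induction that $d^n_v$ is independent of $v$; call the common value $D_n$, so that $D_0=1$ and $D_n=D_{n-1}\sum_{w\in\Br_{n-1}}b^{(n)}_w$. Thus $\AF_n(\Br')\cong M_{D_n}^{\oplus\#\Br_n}$, and the connecting map on $K_0$, namely left multiplication by $\Br'^{(n)}$, has rank one because all rows of $\Br'^{(n)}$ coincide. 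A direct computation of $\varinjlim(\mathbb Z^{\#\Br_n},\Br'^{(n)})$ then identifies $K_0(\AF(\Br'))$, as an ordered group with order unit, with the subgroup of $\mathbb Q$ consisting of rationals whose denominator divides some $D_n$, the order unit being $1$; rank-one connecting maps are exactly what collapse the limit to rank one. By the classification of UHF algebras this means $\AF(\Br')$ is the UHF algebra whose supernatural number is $\lim_n D_n=\prod_{n\ge 1}\big(\sum_{w\in\Br_{n-1}}b^{(n)}_w\big)$.

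It remains to choose the $b^{(n)}_w$ so that this supernatural number equals that of $U$, and this is where the only real difficulty lies: the isomorphism type $U$ is rigid, yet the lower bounds force each partial sum $\sum_w b^{(n)}_w$ to be large. The way out is that the supernatural number $\mathfrak n$ of the infinite-dimensional algebra $U$ is an infinite product of primes counted with multiplicity. Listing these primes as $p_1,p_2,p_3,\dots$, I would group them greedily: let $s_n$ be the product of the next block of primes, taking enough of them that $s_n\ge\sum_{w\in\Br_{n-1}}\ell^{(n)}_w$, which is always possible since infinitely many primes remain at every stage. Then $\prod_{n\ge 1}s_n=\mathfrak n$. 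For each $n$ I then realise $s_n=\sum_w b^{(n)}_w$ with $b^{(n)}_w\ge\ell^{(n)}_w$, for instance by setting $b^{(n)}_w=\ell^{(n)}_w$ and adding the surplus $s_n-\sum_w\ell^{(n)}_w$ to a single coordinate. With these choices $\lim_n D_n=\mathfrak n$, so $\AF(\Br')\cong U$, which is condition (3). The substantive point to verify is thus not the construction of $\Br'$ but the fact that the rigid target $U$ can be hit under the imposed lower bounds, and this is precisely what the infinitude of the prime factorisation of $\mathfrak n$ guarantees.
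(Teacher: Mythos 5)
Your construction is correct, and it reaches condition (3) by a genuinely different route than the paper. The paper inserts an extra single-vertex level between consecutive levels of $\Br$: writing $\prod_{i=k_{j-1}+1}^{k_j}d_i=S_j(\#\Br_j)+r_j$ for the one-vertex diagram of $U$, it puts $S_j$ arrows (or $S_j+r_j$ for one chosen vertex) from the new vertex into each vertex of $\Br_j$ and a single arrow from each vertex of $\Br_j$ into the next new vertex; telescoping away the inserted levels yields $\Br'$, whose multiplicity matrices have entries depending only on the \emph{target} vertex, while telescoping away the original levels visibly recovers the standard diagram of $U$, so $\AF(\Br')\simeq U$ follows from invariance under telescoping with no $K$-theoretic input. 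Your diagram is the transposed variant (entries depending only on the \emph{source} vertex), and you verify (3) instead by computing the ordered $K_0$-group with order unit and appealing to classification; strictly speaking what is needed there is Elliott's classification of AF algebras by $\bigl(K_0,K_0^+,[1]\bigr)$ rather than Glimm's theorem alone, since $\AF(\Br')$ is not presented as a tensor product of matrix algebras, but the computation you sketch is routine (every connecting map factors through the constant vectors, the order unit lands on $1$) and the conclusion is right. Both arguments hinge on the same combinatorial point --- that the supernatural number of the infinite-dimensional algebra $U$ splits into finite blocks with arbitrarily large partial products --- which you arrange by greedily grouping primes and the paper arranges by choosing the telescoping points $k_j$. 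Note finally that your rank-one matrices $\Br'^{(n)}=\mathbf 1\,(b^{(n)})^{T}$ also factor through a single inserted vertex, so your construction could equally well be finished by the paper's telescoping argument, avoiding classification altogether.
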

\begin{proof} Except for the second condition this follows from Lemma 5.1 in \cite{Th2}. We check here that the construction in \cite{Th2} can be arranged to obtain (2). For $j =1,2,3, \cdots$, let $C_j$ be a set consisting of one element $v_j$, and let $\Br''$ be the Bratteli diagram with level sets
$\Br''_{2j-1} = \Br_j, j =1,2,\cdots$, $\Br''_0 = \Br_0$ and $\Br''_{2j} = C_j, \ j = 1,2,3, \cdots$. To define the multiplicity matrices of $\Br''$ let $d_j \geq 2,\ j = 1,2,3,\cdots$, be a sequence of natural numbers such that $U$ is isomorphic to $\AF(\Br''')$ where $\Br'''$ is the Bratteli diagram with one vertex at each level and $d_j$ arrows from level $j-1$ to level $j$. Since $U$ is infinite dimensional, $\lim_{k \to \infty} d_1d_2d_3 \cdots d_k \ = \ \infty$ and we can therefore choose natural numbers $0 = k_0 < k_1 < k_2 < \cdots$ such that when we write
$$
\prod_{i=k_{j-1} +1}^{k_j} d_i  = S_j\left(\# \Br_j\right) + r_j \ ,
$$
where $S_j,r_j \in \mathbb N$, $r_j \leq \#\Br_j$, the lower bound
$$
S_j \ \geq \ \Br^{(j)}_{v,w} \ + \ m_j \ \ \ \ \ \ \forall (v,w) \in \Br_j \times \Br_{j-1} 
$$
holds. Choose an element $u_j \in \Br_j = \Br''_{2j-1}$ for all $j \geq 1$. For $j = 1,2,3, \cdots$ set
\begin{equation*}\label{26-09-18}
\Br''^{(2j-1)}_{v,v_{j-1}} =  \begin{cases} S_j+r_j , & \ v = u_{j} ,\\  S_j , & \ v \in \Br_j \backslash \{ u_{j}\} \  \end{cases}
\end{equation*}
and
$$
\Br''^{(2j)}_{v_j,v} = 1
$$
for all $v \in \Br''_{2j-1}$. The matrices $\{\Br''^{(j)}\}$ are the multiplicity matrices of $\Br''$. Let $\Br'$ be the Bratteli diagram obtained by removing from $\Br''$ the even level sets $\Br''_{2j} = C_j = \{v_j\}, \ j = 1,2,\cdots$, and telescoping as explained above; i.e. we telescope $\Br''$ to $1 < 3<5<7 < \cdots$. Then $\Br'_V = \Br_V$ and it is clear that $\Br'$ has the first two properties (1) and (2). To see that $\AF(\Br') \simeq U$, remove the odd level sets $\Br''_{2j-1} = \Br_j, \ j \geq 1$, in $\Br''$ and telescope to $2 < 4 < 6 < \cdots$. The result is the Bratteli diagram $\Br'''$ for $U$ telescoped to $k_1 <k_2 < \cdots$. Hence $\AF(\Br') \simeq \AF(\Br'') \simeq U$.
\end{proof}

In the following, by a generalized gauge action on an $\AF$-algebra $A$ we mean a flow on $A$ which is conjugate to the flow $\alpha^F$ arising from a potential $F : \Br \to \mathbb R$.

\begin{lemma}\label{19-11-18} Let $A_+$ and $A_-$ be $\AF$ algebras and $U$ a $\UHF$ algebra. There is a generalized gauge action $\alpha$ on $U$ such that $\GS(\alpha)$ is affinely homeomorphic to $S(A_+)$, $\CS(\alpha)$ is affinely homeomorphic to $S(A_-)$ and there is a unique $\beta$-KMS state for $\alpha$ for all $\beta \in \mathbb R$.
\end{lemma}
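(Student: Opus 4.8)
The plan is to reduce the ceiling-state assertion to the ground-state assertion, build a single Bratteli diagram carrying both the $A_+$- and the $A_-$-structure in two separate arms, and then fatten it to realise $U$ while keeping every KMS simplex trivial. First I would record the symmetry $\CS(\alpha^F)=\GS(\alpha^{-F})$: since $\delta_{-F}=-\delta_F$, a state satisfies the ceiling inequality \eqref{ceilingstate} for $\alpha^F$ if and only if it satisfies the ground inequality \eqref{groundstate} for $\alpha^{-F}$. Thus it suffices to produce a pair $(\Br,F)$ whose $F$-geodesic subdiagram $\Br^+$ satisfies $\AF(\Br^+)\cong A_+$ and whose $(-F)$-geodesic subdiagram $\Br^-$ (the $F$-maximising paths) satisfies $\AF(\Br^-)\cong A_-$; the statements about $\GS(\alpha)$ and $\CS(\alpha)$ then follow by applying \refthm{09-05-19d} to $F$ and to $-F$ respectively.

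For the construction, choose Bratteli diagrams $\Br^+,\Br^-$ for $A_+,A_-$ and let $\Br_0$ be their disjoint union below a common top vertex $v_0$, i.e. $\Br_{0,n}=\Br^+_n\sqcup\Br^-_n$ for $n\geq1$, with the arrows of $\Br^+$ and of $\Br^-$ and no others. Define $F$ to be $0$ on the arrows of $\Br^+$ and $2$ on the arrows of $\Br^-$. Then apply \reflemma{12-08-20} to $\Br_0$ and $U$ with a rapidly growing sequence $\{m_j\}$, obtaining a diagram $\Br'$ with the same vertex set, with $\Br'^{(j)}_{v,w}\geq \Br_0^{(j)}_{v,w}+m_j$ for every pair, and with $\AF(\Br')\cong U$. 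Inside $\Br'$ I designate $\Br_0^{(j)}_{v,w}$ of the arrows from $v$ to $w$ as the original ones, keeping their potential $0$ or $2$, and assign potential $1$ to all remaining ($\geq m_j$) \emph{padding} arrows; set $\alpha=\alpha^F$ on $\AF(\Br')\cong U$. Note that the potential $1$ of the padding lies strictly between the two extremes $0$ and $2$.

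The geodesic analysis is now purely combinatorial. The only arrows of potential $0$ lie in $\Br^+$, and $\Br^+$ is closed under its own arrows, so the minimal value $m(v)$ of $F$ over paths to $v$ equals $0$ for $v\in\Br^+$ and equals $1$ for every other $v$ (reach level $n-1$ inside $\Br^+$ by $0$-arrows, then take one padding arrow, which exists because padding joins every pair). A path that leaves $\Br^+$ can never regain minimality, since every arrow emitted from a vertex off $\Br^+$ has potential $\geq1>0$; hence the infinite $F$-geodesics are exactly the paths that stay in $\Br^+$, so $\Br^+$ is the $F$-geodesic subdiagram and $\GS(\alpha)\cong S(A_+)$. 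The symmetric argument, using the maximal value and the potential-$2$ arrows, identifies $\Br^-$ as the $(-F)$-geodesic subdiagram, giving $\CS(\alpha)\cong S(A_-)$.

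The delicate point, and the main obstacle, is that $\KMS_\beta(\alpha)$ must be a single point for every $\beta$, even though the ground/ceiling structure forces extremal arrows into the two arms. By \reflemma{10-05-19} this amounts to showing that $\varprojlim_j\Br(\beta)^{(j)}$, normalised at $\psi^0=1$, is a single point. Since every pair of consecutive vertices is joined by padding, each matrix $\Br(\beta)^{(j)}$ is strictly positive; and because the padding multiplicities dominate the special ones and (by the construction of \reflemma{12-08-20}) are nearly independent of $w$, the large common factor $S_je^{-\beta}$ cancels in the four-point cross ratios, so the Hilbert projective diameter of $\Br(\beta)^{(j)}$ is bounded by a constant $\Delta_0(\beta)<\infty$ uniformly in $j$ (this bound blows up as $\beta\to\pm\infty$, which is precisely why the ground and ceiling simplices are allowed to be large). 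Birkhoff's contraction theorem then yields a contraction ratio $\tanh(\Delta_0(\beta)/4)<1$ at every level, so the backward products contract the Hilbert metric to $0$ and the inverse limit is a single ray; thus $\KMS_\beta(\alpha)$ is a single point for each $\beta\neq0$, while uniqueness at $\beta=0$ is automatic because $U$ is UHF. The quantitative heart of the proof is choosing $\{m_j\}$ large enough that the padding dominates both arms at every level, which is what secures the uniform diameter bound; if one prefers to phrase this through the tools of the paper, \reflemma{22-11-18a} removes the common per-level factor and one is left to verify the contraction by hand, whereas \reflemma{09-05-19g} does not apply directly since the special part is not a small perturbation for large $|\beta|$.
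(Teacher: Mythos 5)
Your proof is correct, and its skeleton --- the two arms $\Br(\pm)$ hanging under a common root, padded via Lemma~\ref{12-08-20} to realise $U$, with $\GS$ and $\CS$ read off from Theorem~\ref{09-05-19d} applied to $F$ and to $-F$ --- is exactly the paper's. Where you genuinely diverge is in the choice of potential and, consequently, in the proof of KMS-uniqueness. The paper puts potential $0$ on the padding and $\mp\epsilon_j$ on the two arms, with $\epsilon_j \to 0$ so fast that $\left|e^{\beta\epsilon_j}-1\right|\le\delta_j$ for all $\beta\in[-j,j]$; then for each fixed $\beta$ the matrices $\Br'(\beta)^{(j)}$ are a multiplicative $\delta_j$-perturbation of $\left(\Br'^{(j)}\right)^T$, Lemma~\ref{09-05-19g} identifies $\varprojlim_j\Br'(\beta)^{(j)}$ with the trace cone of $U$, and uniqueness is immediate. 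You instead keep the potential fixed at $0/1/2$ and make the padding multiplicities dominate, then invoke Birkhoff's contraction theorem for the Hilbert projective metric. This works: the cross-ratio cancellation you describe does give a projective diameter bound once $m_j/\max_{v,w}\Br^{(j)}_{v,w}\to\infty$, and contraction of the tail products forces the normalised inverse limit to be a single point, which by Lemma~\ref{10-05-19} gives uniqueness (existence should be said explicitly --- it comes from Powers--Sakai or from compactness of the inverse limit --- since the lemma asserts existence as well). What the paper's route buys is that everything stays inside its own toolbox; what yours buys is a potential bounded away from $0$, which makes the geodesic bookkeeping more transparent. Two small points to tighten: the claim that no geodesic leaves $\Br^+$ needs the extra half-step for a path that enters the $-$-arm by a single padding arrow (it matches the minimum $1$ at that level but exceeds the minimum at the next level, since every arrow emitted from the $-$-arm costs at least $1$); and your ``uniform in $j$'' diameter bound really only holds for $j$ large relative to $|\beta|$, which is all the contraction argument needs but is worth stating precisely.
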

\begin{proof} Let $\Br(\pm)$ be Bratteli diagrams such that $\AF(\Br(\pm)) \simeq A_{\pm}$. Let $v^{\pm}_0$ be the top vertexes in $\Br(\pm)$. We define a Bratteli diagram $\Br$ such that 
$$
\Br_j = \Br(+)_{j-1} \sqcup \Br(-)_{j-1} \ , \ j \geq 1 ,
$$
and $\Br_0 = \{v_0\}$. There is one arrow in $\Br_{Ar}$ from $v_0$ to $v^+_0$ and one from $v_0$ to $v^-_0$, and no other arrows from $\Br_0$ to $\Br_1$. For $(v,w) \in \Br_j \times \Br_{j-1}, \ j \geq 2$, we set
$$
\Br^{(j)}_{v,w} = \begin{cases} \Br(+)^{(j-1)}_{v,w} & \ \text{when} \ (v,w) \in \Br(+)_{j-1} \times \Br(+)_{j-2} \ , \\
 \Br(-)^{(j-1)}_{v,w} & \ \text{when} \ (v,w) \in \Br(-)_{j-1} \times \Br(-)_{j-2} \ , \\
 0  & \ \text{otherwise} \ . \end{cases}
 $$
Then $\AF(\Br) \simeq A_+\oplus A_-$. Note that there is an obvious identification
$$
P(\Br) = P(\Br(+)) \sqcup P(\Br(-)) \ .
$$ 
By Lemma \ref{12-08-20} there is a Bratteli diagram $\Br'$ such that
\begin{itemize}
\item $\Br'_V = \Br_V$,
\item $\Br^{(j)}_{v,w} \ + \ 1  \ \leq  \  \Br'^{(j)}_{v,w}$ for all $(v,w) \in \Br_j \times \Br_{j-1}$ and all $j$, and
\item $\AF (\Br') \simeq U$. 
\end{itemize} 
Choose first $\delta_j \in \left] 0,\frac{1}{2}\right[$ such that
$$
\delta_j \frac{ (\# \Br_j)\prod_{k=1}^j \left(2 \left\|\Br'^{(j)}\right\| +1\right)}{\left(\Br'^{(j)}\Br'^{(j-1)} \cdots \Br'^{(1)}\right)_{w,v_0}} \ \leq \ 4^{-j} \ 
$$
for all $w \in \Br'_j = \Br_j$, and then $\epsilon_j > 0$ such that
$$
\left|e^{\beta \epsilon_j} -1\right| \leq \delta_j
$$
for all $\beta \in [-j,j]$.
Let $\mathcal A_{v,w}^{(j)}$ and ${\mathcal A'}_{v,w}^{(j)}$ denote the set of arrows from $w \in \Br_{j-1}$ to $v \in \Br_j$ in $\Br$ and $\Br'$ respectively. Since $\Br'^{(j)}_{v,w} \geq \Br^{(j)}_{v,w}$ we can assume that $\mathcal A_{v,w}^{(j)} \subseteq{\mathcal A'}_{v,w}^{(j)}$, and we define a potential $F : \Br'_{Ar} \to \mathbb R$ such that $F(a) = 0$ when $s(a) = v_0$ and
$$
F(a) = \begin{cases} -\epsilon_j,  \ & \ \text{when} \ a \in \mathcal A_{v,w}^{(j)} \ \text{and} \ s(a) \in \Br(+)_{j-1}  \\ 0, \ & \  \text{when} \ a \in {\mathcal A'}_{v,w}^{(j)} \backslash \mathcal A_{v,w}^{(j)} \\
 \epsilon_j,  & \ \text{when} \ a \in \mathcal A_{v,w}^{(j)} \ \text{and} \ s(a) \in \Br(-)_{j-1} . \end{cases}
$$ 
It follows from the definition of $F$ and the fact that ${\Br'}^{(j)}_{v,w} \geq 1$ for all $v,w$, that an infinite path $p \in P(\Br')$ is an $F$-geodesic iff $p \in P(\Br(+))$ and a $-F$-geodesic iff $p \in P(\Br(-))$. It follows therefore from Theorem \ref{09-05-19d} that $\GS(\alpha^F) \simeq S\left(\AF(\Br(+))\right) \simeq S(A_+)$ and $\CS(\alpha^F) = \GS(\alpha^{-F}) \simeq S\left(\AF(\Br(-))\right) \simeq S(A_-)$, where the symbol $\simeq$ denotes affine homeomorphism. 
Note next that
$$
\left|\Br'(\beta)^{(j)}_{v,w} - \Br'^{(j)}_{w,v}\right| \ \leq \ \left|e^{\beta \epsilon_j} - 1\right| \Br'^{(j)}_{w,v} \ \leq \ \delta_j  \Br'^{(j)}_{w,v}
$$
for all $\beta \in [-j,j]$ and all $v \in \Br'_{j-1}, \ w \in \Br'_j$.  It follows therefore from Lemma \ref{09-05-19g} that $\varprojlim_j \Br'(\beta)^{(j)} \simeq \varprojlim_j \left(\Br'^{(j)}\right)^T$ for all $\beta \in \mathbb R$, where $\left(\Br'^{(j)}\right)^T$ denotes the transpose of $\Br'^{(j)}$. It is well-known that $\varprojlim_j \left(\Br'^{(j)}\right)^T$ is affinely homeomorphic to the cone of positive trace functionals on $\AF(\Br')$.\footnote{This can also be deduced from the identification $\varprojlim (\Br'^{(j)})^T = \varprojlim\Br'(0)^{(j)}$ and Lemma \ref{10-05-19} since the $0$-KMS states are the trace states.}Since $\AF(\Br') \simeq U$ this is $\mathbb R^+$. Thus $\varprojlim_j \Br'(\beta)^{(j)}$ is a copy of $\mathbb R^+$ and $\alpha^F$ has a unique $\beta$-KMS state for every $\beta\in \mathbb R$ by Lemma \ref{10-05-19}.
\end{proof}

\begin{lemma}\label{19-11-18a} Let $\alpha^F$ be a generalized gauge action on $\AF(\Br)$ and let $U$ be a $\UHF$ algebra. There is a generalized gauge action $\alpha$ on $U$ such that $\KMS_\beta(\alpha^F)$ is strongly affinely isomorphic to $\KMS_{\beta}(\alpha)$ for all $\beta \neq 0$ and there is exactly one ground state and one ceiling state for $\alpha$.
\end{lemma}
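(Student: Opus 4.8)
The plan is to realize the KMS data of $\alpha^F$ inside a UHF algebra and then break the degeneracy of the geodesics by a perturbation so small that it does not move any of the KMS simplices. First I would invoke \reflemma{12-08-20} to build a Bratteli diagram $\Br'$ with $\Br'_V=\Br_V$, with multiplicities dominating those of $\Br$ by a sequence $m_j$ that I keep at my disposal, and with $\AF(\Br')\simeq U$. Retaining the value $F$ on every arrow inherited from $\Br$ makes the part of the weighted matrix system $\Br'(\beta)^{(j)}$ coming from the old arrows reproduce $\Br(\beta)^{(j)}$; thus, by \reflemma{10-05-19}, it will be enough to prove that $\varprojlim_j\Br'(\beta)^{(j)}$ is affinely homeomorphic to $\varprojlim_j\Br(\beta)^{(j)}$ for every $\beta\neq0$ in order to obtain the required strong affine isomorphism $\KMS_\beta(\alpha)\simeq\KMS_\beta(\alpha^F)$.

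For the ground and ceiling states I would work on the $F$-geodesic sub-diagram. The $F$-geodesics of $\Br'$ determine $(\Br')^{+}$, on which $F$ is constant along paths with a common range vertex, so any small signed offset $\eta$ supported on $(\Br')^{+}$ selects a sub-family of geodesics without creating new ones (the non-geodesic paths keep a strictly larger $F$-cost, which a tiny $\eta$ cannot overturn). I would therefore choose $\eta$, level by level, to give a strict preference to one vertex and one outgoing arrow, so that the refined geodesic is a single path, and make a symmetric choice of $\eta$ on the $F$-anti-geodesic sub-diagram so that exactly one anti-geodesic survives. By \reflemma{09-05-19} the geodesic and anti-geodesic sub-diagrams of the resulting potential $G$ are then single lines, whence both associated AF algebras equal $\mathbb C$, and \refthm{09-05-19d}, applied to $G$ and to $-G$, produces exactly one ground state and one ceiling state.

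The main obstacle is to carry out both constructions without disturbing the KMS simplices, and here I would follow the quantitative scheme of \reflemma{19-11-18}: fix $\delta_j$ satisfying the summability bound \eqref{07-08-20} for $\Br'(\beta)^{(j)}$, and then take the offsets with $|\eta|\le\epsilon_j$ so small that $|e^{\beta\epsilon_j}-1|\le\delta_j$ whenever $|\beta|\le j$. For each fixed $\beta\neq0$ the offsets change every affected weight by a factor within $\epsilon_j$ of $1$ once $j\ge|\beta|$, so \reflemma{09-05-19g} identifies the two inverse limits and the KMS simplices are untouched. The genuinely delicate point is that forcing $\AF(\Br')$ to be a \emph{simple} UHF algebra requires arrows between parts of the diagram that are absent in $\Br$, and these extra arrows must be inserted without perturbing the (possibly high-dimensional and $\beta$-dependent) KMS simplices for either sign of $\beta$; I would dispose of them by giving them weights that stay below $\epsilon_j$ times the surviving inherited $F$-weight for all $\beta$ in the relevant range — using the large multiplicities $m_j$ from \reflemma{12-08-20} to inflate the inherited weights and \reflemma{22-11-18a} to absorb the resulting level-wise scaling — so that \reflemma{09-05-19g} again applies. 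Reconciling this domination requirement, which pulls the added potentials toward the bulk of the $F$-values, with the need to keep a single strictly-cheapest and a single strictly-dearest spine is the crux of the argument.
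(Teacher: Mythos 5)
There are two genuine gaps, one in each half of the construction.

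First, the KMS half. You propose to pass from $\Br$ to a diagram $\Br'$ with $\AF(\Br')\simeq U$ via \reflemma{12-08-20}, keep $F$ on the inherited arrows, and then argue that the new arrows can be weighted so that \reflemma{09-05-19g} identifies $\varprojlim_j\Br'(\beta)^{(j)}$ with $\varprojlim_j\Br(\beta)^{(j)}$. This cannot work. \reflemma{12-08-20} necessarily inserts arrows between vertex pairs $(v,w)$ with $\Br^{(j)}_{v,w}=0$ (that is what makes $\AF(\Br')$ simple), so $\Br'(\beta)^{(j)}$ acquires strictly positive entries where $\Br(\beta)^{(j)}$ vanishes; the relative-error hypothesis \eqref{09-05-19k} forces $B^{(j)}_{v,w}=0$ wherever $A^{(j)}_{v,w}=0$, so \reflemma{09-05-19g} is simply inapplicable, and the inverse limit genuinely changes --- indeed \reflemma{19-11-18} exploits exactly this operation to \emph{collapse} the KMS simplex to a point. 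Moreover, even on pairs with $\Br^{(j)}_{v,w}>0$, a fixed real weight $G(a)$ on a new arrow cannot make $e^{-\beta G(a)}$ relatively $\epsilon_j$-small against the inherited terms for all $\beta\in[-j,j]$: whatever suppresses it for one sign of $\beta$ inflates it for the other. The paper avoids both problems by first invoking Theorem 5.5 of \cite{Th2} to transfer the KMS data onto a diagram for a UHF factor $U^+$ of $U$ (this external input is indispensable and is missing from your argument), and then tensoring with $U^-$ so that $\Br'^{(j)}=D_j\Br^{(j)}$ is a \emph{uniform} replication; the uniform factor $D_j$ is removed by \reflemma{22-11-18a}, and only two arrows per level carry altered weights, contributing an amount that is absolutely tiny compared with $D_j\Br(\beta)^{(j)}_{v,w}$.

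Second, the geodesic half. A ``small signed offset $\eta$'' selecting a preferred vertex and arrow at each level does not make the geodesic unique. A path $p$ that leaves your spine at level $j$ and terminates at an off-spine vertex $w$ at level $j+1$ is a geodesic as soon as it is the cheapest path to each of its own terminal vertices; to exclude it you must exhibit a strictly cheaper path to $w$, and the natural candidate (spine up to level $j$, then one arrow to $w$) costs $F(q[1,j-1])+F'(q_j)+F(a)$ against $F(p[1,j-1])+F(p_j)+F(p_{j+1})$. If $\eta$ is tiny this comparison can go the wrong way whenever the arrow $a$ from the spine vertex to $w$ happens to be expensive. The paper's choice \eqref{13-08-20}, $m^+_j<F(a)+F(b)-F(c)<m^-_j$, is precisely the two-level look-ahead needed to win this comparison for every $w$, and it additionally requires full connectivity $\Br^{(j)}_{v,w}\geq 2$ (arranged by telescoping, using simplicity). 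So the perturbation must be \emph{large and calibrated}, not small; the tension you identify at the end of your proposal between ``small enough not to disturb KMS'' and ``decisive enough to isolate one spine'' is resolved in the paper not by shrinking the perturbation but by diluting it: only two of the $D_j\Br^{(j)}_{v,w}$ parallel arrows are altered, and $D_j$ is chosen so large (condition \eqref{23-11-18}) that the alteration is invisible to the weighted matrices while remaining fully visible to the geodesic ordering.
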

\begin{proof} Write $U = U^+\otimes U^-$ where $U^{\pm}$ are $\UHF$ algebras. It follows from Theorem 5.5 and Lemma 4.3 in \cite{Th2} that there is a Bratteli diagram $\Br'$ and a potential $F' : \Br'_{Ar} \to \mathbb R$ such that
\begin{itemize}
\item $\KMS_{\beta}(\alpha^F)$ is strongly affinely isomorphic to $\KMS_\beta(\alpha^{F'})$ for all $\beta \neq 0$,
\item $\AF(\Br') \simeq U^+$.
\end{itemize}
Since $U^+$ and hence also $\AF(\Br')$ is simple, the Bratteli diagram $\Br'$ has the property that for each $j$ there is an $n \in \mathbb N$ such that
$$
\left({\Br'}^{(j+n)}{\Br'}^{(j+n-1)} \cdots {\Br'}^{(j+1)}\right)_{v,w} \ \geq \ 1
$$
for all $w\in \Br'_j, \ v \in \Br'_{j+n}$, cf. Corollary 3.5 of \cite{Br}. Since $\AF(\Br)$ is infinite dimensional, it follows that we can arrange, after a telescoping of $\Br'$ and an application of Lemma \ref{13-05-19}, that
\begin{itemize}
\item $\Br'^{(j)}_{v,w} \geq 2$ for all $(v,w) \in \Br'_j \times \Br'_{j-1}$ and all $j = 1,2,3, \cdots$ . 
\end{itemize}
Exchanging $\Br$ and $F$ with $\Br'$ and $F'$ we can therefore assume that
\begin{itemize}
\item[(a)] $\AF(\Br) \simeq U^+$ and
\item[(b)] $\Br^{(j)}_{v,w} \geq 2$ for all $(v,w) \in \Br_j \times \Br_{j-1}$ and all $j = 1,2,3, \cdots$ . 
\end{itemize}
Let $\left\{\epsilon_j\right\}_{j=1}^\infty$ be a sequence of positive numbers in the interval $\left]0,\frac{1}{2}\right[$ such that
\begin{equation*}\label{07-08-20} \epsilon_j \sqrt{\# \Br_j}  \left(\left(\Br(\beta)^{(1)} \cdots \Br(\beta)^{(j)}\right)_{v_0,w} \right)^{-1} \prod_{k=1}^j \left(2\left\|\Br(\beta)^{(k)}\right\| +1 \right) \ \leq \ 4^{-j}
\end{equation*}
 for all $w \in \Br_j$ and all $\in [-j,j]$. For each $j \geq 1$ we choose real numbers $m^{\pm}_j$ such that
\begin{equation}\label{13-08-20}
m^+_j \ < \ F(a) + F(b)-F(c) \  < \ m^-_j
\end{equation}
for all arrows $a,b,c \in \Br_{Ar}$ with $r(a) \in \Br_j,$ and $r(b),r(c) \in \Br_{j+1}$. Let $q = (q_i)_{i=1}^{\infty} \in P(\Br)$ be an infinite path in $\Br$. Let $\{d_j\}_{j=1}^{\infty}$ be a sequence of natural numbers such that the Bratteli diagram with one vertex at each level and one-by-one multiplicity matrices given by the sequence $d_1,d_2,d_3, \cdots$ is a diagram for $U^-$. We can then choose a sequence $1 = i_0 < i_1 < i_2 < i_3 < \cdots$ in $\mathbb N$ such that the numbers
$$
D_j = \prod_{k=i_{j-1}}^{i_j} d_k
$$
have the properties that $D_j \geq 2$ and  
\begin{equation}\label{23-11-18}
\left| D_j^{-1}\left( e^{-\beta m^-_j} + e^{-\beta m^+_j} - 2e^{-\beta F(q_j)}\right)\right| \leq \epsilon_j e^{- \beta F(q_j)} 
\end{equation}
for all $\beta \in [-j,j]$ and all $j \geq 1$. Let $\Br'$ be the Bratteli diagram with $\Br'_V = \Br_V$ and multiplicity matrices $\Br'^{(j)} = D_j\Br^{(j)}, \ j =1,2,3, \cdots$. Then
$$
\AF(\Br') \simeq \AF(\Br) \otimes U^- \simeq U^+\otimes U^- \simeq U \ .
$$ 
To define a potential on $\Br'$ let $\mathcal A^{(j)}_{v,w}$ be the set of arrows in $\Br$ from $w \in \Br_{j-1}$ to $v \in \Br_j$. We identify the set ${\mathcal A'}^{(j)}_{v,w}$ of arrows from $w \in \Br'_{j-1}$ to $v \in \Br'_j$ with the set $\{1,2, \cdots, D_{j}\} \times {\mathcal A}^{(j)}_{v,w}$. Set
$$
{F'}(i,a) = F(a)
$$
when $(i,a) \in \{1,2, \cdots, D_{j}\} \times {\mathcal A}^{(j)}_{v,w} \backslash \{(1,q_j), (2,q_j)\}$ and 
$$
{F'}(1,q_j) = m^+_j \ , \ {F'}(2,q_j) = m^-_j  \ .
$$
Then
$$
{\Br'}(\beta)^{(j)}_{v,w} = D_j\Br(\beta)^{(j)}_{v,w}   
$$
when $(v,w) \neq (s(q_j),r(q_j))$ and
\begin{equation*}
\begin{split}
&{\Br'}(\beta)^{(j)}_{v,w} \\ 
& =  e^{-\beta m^-_j}  +  e^{-\beta m^+_j} \ - \ 2 e^{-\beta F(q_j)}  \ + \ D_j \Br(\beta)^{(j)}_{v,w}   
\end{split}
\end{equation*}
when $(v,w) = (s(q_j),r(q_j))$. It follows from \eqref{23-11-18} that
$$
\left| D_j^{-1} {\Br'}(\beta)^{(j)}_{v,w} - \Br(\beta)^{(j)}_{v,w} \right|  \ \leq \ \epsilon_j \Br(\beta)^{(j)}_{v,w}
$$
for all $(v,w) \in \Br_{j-1}\times \Br_j$ when $\beta \in [-j,j]$ and then from Lemma \ref{22-11-18a} and Lemma \ref{09-05-19g} that
$$
\varprojlim_j \Br(\beta)^{(j)} \simeq \varprojlim_j D_j^{-1} \Br'(\beta)^{(j)} \simeq \varprojlim_j \Br'(\beta)^{(j)} 
$$
for all $\beta \in \mathbb R$. It follows then from Proposition 3.4 and Lemma 4.3 in \cite{Th2} that $\KMS_\beta(\alpha^F)$ is strongly affinely isomorphic to $\KMS_\beta(\alpha^{F'})$ for all $\beta \in \mathbb R$. Since $\AF(\Br') \simeq U$ there is a one-parameter group $\alpha$ on $U$ conjugate to $\alpha^{F'}$, and it remains only to show that $\GS(\alpha)$ and $\CS(\alpha)$ both only consist of one element, or equivalently that $\GS(\alpha^{F'})$ and $\CS(\alpha^{F'})$ only contain one element. To do this for $\GS(\alpha^{F'})$ it suffices to show that $\Geo^{F'}(\Br') = \{q'\}$ where $q' = \left\{(1,q_j)_{j=1}^{\infty}\right\}$ since this in combination with Theorem \ref{09-05-19d} implies that $\GS(\alpha^{F'}) \simeq S(\mathbb C)$. It is clear that $q' \in \Geo^{F'}(\Br')$ since $F'(1,q_j) \leq  F'(i,a)$ for all $(i,a) \in r^{-1}(\Br'_j)$ and all $j$. To prove that it is the only element of $\Geo^{F'}(\Br')$, let $p = (p_j)_{j=1}^{\infty} \in P(\Br')$ such that $p \neq q'$. Let $j$ be the least natural number for which $p_j \neq q'_j$. Then
$$
F\left(p|_{[1,j+1]}\right) - F(q'_{[1,j+1]}) = F(p_{j+1}) + F(p_j) - F(q'_{j+1}) - F(q'_j) > 0 \ 
$$
since $F\left(q'_j\right) < F(p_j)$ and $F(q'_{j+1}) \leq F(p_{j+1})$. This shows that $p \notin \Geo^{F'}(\Br')$ when $r(p_{j+1}) = r(q'_{j+1})$. Assume therefore that $r(p_{j+1}) \neq r(q'_{j+1})$. Choose an arrow $a \in \Br'_{Ar}$ such that $s(a) = r(q'_j)$, $r(a) = r(p_{j+1})$ and $a \neq (2,q_{j+1})$ which is possible thanks to (b) above. Then
$$
F(q'_{[1,j]}a) - F(p|_{[1,j+1]})  =  m^+_j + F(a) - F(p_j) - F(p_{j+1}) \  .
$$
Since $p_j \neq q'_j$ and $p_{j+1} \neq q'_{j+1}$ it follows from \eqref{13-08-20} that $F(q'_{[1,j]}a) - F(p|_{[1,j+1]}) < 0$, and hence the conclusion is again that $p \notin \Geo^{F'}(\Br')$. Hence $\GS(\alpha^{F'}) \simeq S(\mathbb C)$ as asserted. An identical argument works to show that $\Geo^{-F'}(\Br) =  \left\{(2,q_j)_{j=1}^{\infty}\right\}$, leading to the conclusion that also $\CS(\alpha)$ only consists of one state.

\end{proof}

To simplify notation in the following, and to emphasize the potentials, we let $\mathbb G_F$ denote the $C^*$-algebra $\AF(\Br^+)$ when $\Br^+$ is defined using the potential $F: \Br_{Ar} \to \mathbb R$. 

\begin{lemma}\label{19-11-18b} Let $\Br$ and $\Br'$ be Bratteli diagrams with potentials $F:\Br_{Ar} \to \mathbb R$ and $F' : \Br'_{Ar} \to \mathbb R$. Then $\GS (\alpha^F \otimes \alpha^{F'}) \simeq S(\mathbb G_F \otimes \mathbb G_{F'})$.
\end{lemma}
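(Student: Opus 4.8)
The plan is to realize $\alpha^F \otimes \alpha^{F'}$ as an honest generalized gauge action on an explicit product Bratteli diagram, apply Theorem \ref{09-05-19d}, and then identify the associated sub-diagram as a product. First I would form the \emph{product} Bratteli diagram $\Br''$: set $\Br''_n = \Br_n \times \Br'_n$, declare the arrows from $(v,v')$ to $(w,w')$ to be the pairs $(a,a')$ with $a$ an arrow $v \to w$ in $\Br$ and $a'$ an arrow $v' \to w'$ in $\Br'$, and define $F''(a,a') = F(a) + F'(a')$. Then $\Br''_0 = \{(v_0,v_0')\}$ is a single vertex, and $\Br''$ is again a row-finite Bratteli diagram without sinks. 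The elements $E^n_{\mu,\mu'} \otimes {E'}^n_{\sigma,\sigma'}$, indexed by $\mathcal{P}^{(2)}_n(\Br'') = \{((\mu,\sigma),(\mu',\sigma'))\}$, are matrix units identifying $\AF_n(\Br)\otimes \AF_n(\Br')$ with $\AF_n(\Br'')$ compatibly with the inclusions, since partial multiplicities multiply and thus match the Kronecker product of the multiplicity matrices. Hence $\AF(\Br'') \cong \AF(\Br)\otimes \AF(\Br')$. Because $F''(\mu,\sigma) = F(\mu) + F'(\sigma)$, the two flows agree on each matrix unit, so by strong continuity $\alpha^{F''} = \alpha^F \otimes \alpha^{F'}$.

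By Theorem \ref{09-05-19d} this gives $\GS(\alpha^F \otimes \alpha^{F'}) = \GS(\alpha^{F''}) \simeq S(\AF((\Br'')^+))$, and it remains to prove $\AF((\Br'')^+) \cong \mathbb{G}_F \otimes \mathbb{G}_{F'}$. The substantive point is the computation of the $F''$-geodesics. Under the identification $P(\Br'') = P(\Br) \times P(\Br')$, write $p = (p^{(1)},p^{(2)})$, so that $F''(p[1,n]) = F(p^{(1)}[1,n]) + F'(p^{(2)}[1,n])$, while a competing path $\nu$ with $r(\nu) = r(p_n)$ decomposes as $\nu = (\mu,\sigma)$ with $r(\mu) = r(p^{(1)}_n)$ and $r(\sigma) = r(p^{(2)}_n)$, and $F''(\nu) = F(\mu) + F'(\sigma)$. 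Since $\mu$ and $\sigma$ range independently, the minimum of $F''(\nu)$ over the competitors decouples into the sum of the two separate minima. As each of $F(p^{(1)}[1,n])$ and $F'(p^{(2)}[1,n])$ is bounded below by its own minimum, the geodesic inequality for $p$ forces equality in both coordinates at every level; hence $p \in \Geo^{F''}(\Br'')$ if and only if $p^{(1)} \in \Geo^F(\Br)$ and $p^{(2)} \in \Geo^{F'}(\Br')$. This yields $\Geo^{F''}(\Br'') = \Geo^F(\Br) \times \Geo^{F'}(\Br')$.

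Finally, using Lemma \ref{09-05-19} and reading off the vertices and arrows of the sub-diagram from its geodesics, the factorization gives $(\Br'')^+_n = \Br^+_n \times {\Br'}^+_n$ and $(\Br'')^+_{Ar} = \{(a,a') : a \in \Br^+_{Ar}, \ a' \in {\Br'}^+_{Ar}\}$: an arrow $(a,a')$ occurs as the $n$-th arrow of some $F''$-geodesic precisely when $a$ occurs in some $F$-geodesic and $a'$ in some $F'$-geodesic, and any such pair of one-sided geodesics assembles to an $F''$-geodesic. Thus $(\Br'')^+$ is the product of $\Br^+$ and ${\Br'}^+$, and applying the product-diagram/tensor-product correspondence a second time gives $\AF((\Br'')^+) \cong \AF(\Br^+)\otimes \AF({\Br'}^+) = \mathbb{G}_F \otimes \mathbb{G}_{F'}$. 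Combining with the affine homeomorphism of Theorem \ref{09-05-19d} completes the proof.

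I expect no serious obstacle: the one genuinely new observation is the decoupling of the geodesic minimization, which is immediate from the additivity of $F''$ and the product structure of the competing paths. The only care required is in the two routine applications of the correspondence between products of Bratteli diagrams and tensor products of $\AF$ algebras, and in checking that the plus-construction commutes with products at the level of arrows as well as vertices, which follows directly from the factorization $\Geo^{F''}(\Br'') = \Geo^F(\Br) \times \Geo^{F'}(\Br')$.
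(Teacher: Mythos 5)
Your proposal is correct and follows essentially the same route as the paper: form the product Bratteli diagram with the sum potential $F''(a,a')=F(a)+F'(a')$, observe that $\left(\Br\times\Br'\right)^+=\Br^+\times\Br'^+$ so that $\mathbb G_{F''}\simeq\mathbb G_F\otimes\mathbb G_{F'}$, and apply Theorem \ref{09-05-19d}. The only difference is that you spell out the decoupling of the geodesic minimization, which the paper asserts without proof; your argument for it is sound.
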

\begin{proof} $\AF(\Br) \otimes \AF(\Br')$ is $*$-isomorphic to $\AF(\Br \times \Br')$ where 
$$
\left(\Br\times \Br'\right)_j = \Br_j \times \Br'_j
$$ 
with multiplicity matrices given by $\left(\Br \times \Br'\right)^{(j)}_{(v,w), (v',w')} = \Br^{(j)}_{v,v'}\Br'^{(j)}_{w,w'}$. Under this $*$-isomorphism $\alpha^F \otimes \alpha^{F'}$ is conjugate to the generalized gauge action on $\AF(\Br \times \Br')$ defined by a potential $F''$ such 
$$F''(a,a')= F(a) + F'(a')$$ 
when $(a,a') \in \left(\Br \times \Br'\right)_{Ar} \subseteq \Br_{Ar} \times \Br'_{Ar}$. Then
$$
\left(\Br \times \Br'\right)^+ \ =  \ \Br^+ \times \Br'^+ \ ,
$$
implying that
$$
\mathbb G_{F''} \simeq \mathbb G_F \otimes \mathbb G_{F'} \ .
$$
It follows therefore from Theorem \ref{09-05-19d} that $\GS\left(\alpha^F \otimes \alpha^{F'}\right) \simeq S\left( \mathbb G_{F''} \right) \simeq S\left(\mathbb G_F \otimes \mathbb G_{F'}\right) $.
\end{proof}

The following is the main result of the paper. It is a restatement of Theorem \ref{MAIN0} from the introduction.

\begin{thm}\label{MAIN1} Let $F$ be a potential on the Bratteli diagram $\Br$ and let $A_+$ and $A_-$ be $\AF$ algebras. Let $U$ be a $\UHF$ algebra. There is a generalized gauge action $\alpha$ on $U$ such that $\KMS_{\beta}(\alpha)$ is strongly affinely isomorphic to $\KMS_\beta(\alpha^F)$ for all $\beta \neq 0$ while $\GS(\alpha)$ is affinely homeomorphic to $S(A_+)$ and $\CS(\alpha)$ is affinely homeomorphic to $S(A_-)$.
\end{thm}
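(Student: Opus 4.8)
The plan is to build $\alpha$ as a tensor product of two generalized gauge actions, one carrying the prescribed KMS data and the other the prescribed ground- and ceiling-state data, each being trivial with respect to the structure supplied by the other. Since every infinite-dimensional $\UHF$ algebra is a tensor product of two infinite-dimensional $\UHF$ algebras (split the supernatural number), I would first write $U = U_1 \otimes U_2$ with $U_1,U_2$ both $\UHF$. Applying Lemma~\ref{19-11-18} to $A_+,A_-$ and $U_1$ yields a generalized gauge action $\alpha_1 = \alpha^{F_1}$ on $\AF(\Br_1) \simeq U_1$ with $\GS(\alpha_1) \simeq S(A_+)$, $\CS(\alpha_1) \simeq S(A_-)$, and a unique $\beta$-KMS state for every $\beta$. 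Applying Lemma~\ref{19-11-18a} to $\alpha^F$ and $U_2$ yields $\alpha_2 = \alpha^{F_2}$ on $\AF(\Br_2) \simeq U_2$ with $\KMS_\beta(\alpha_2)$ strongly affinely isomorphic to $\KMS_\beta(\alpha^F)$ for all $\beta \neq 0$ and with a unique ground state and a unique ceiling state. I then set $\alpha = \alpha_1 \otimes \alpha_2$, which is conjugate to the generalized gauge action $\alpha^{F''}$ on $\AF(\Br_1 \times \Br_2) \simeq U_1 \otimes U_2 = U$ attached to the potential $F''(a,a') = F_1(a) + F_2(a')$, exactly as in Lemma~\ref{19-11-18b}.

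The ground- and ceiling-state assertions then drop out of Lemma~\ref{19-11-18b}. By Theorem~\ref{09-05-19d} the hypotheses on $\alpha_1$ read $\mathbb G_{F_1} \simeq A_+$ and $\mathbb G_{-F_1} \simeq A_-$, while uniqueness of the ground and ceiling states of $\alpha_2$ reads $\mathbb G_{F_2} \simeq \mathbb C \simeq \mathbb G_{-F_2}$. Hence $\GS(\alpha) \simeq S(\mathbb G_{F_1} \otimes \mathbb G_{F_2}) \simeq S(A_+)$, and since $\CS(\alpha) = \GS(\alpha^{-F''})$ with $-F'' = (-F_1) + (-F_2)$, the same argument applied to $-F_1,-F_2$ gives $\CS(\alpha) \simeq S(\mathbb G_{-F_1} \otimes \mathbb G_{-F_2}) \simeq S(A_-)$.

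The heart of the matter, and what I expect to be the main obstacle, is the KMS assertion: I must show that $\KMS_\beta(\alpha_1 \otimes \alpha_2)$ is strongly affinely isomorphic to $\KMS_\beta(\alpha_2)$ for $\beta \neq 0$, after which Lemma~\ref{19-11-18a} supplies the identification with $\KMS_\beta(\alpha^F)$. A $\beta$-KMS state $\omega$ of the tensor product restricts on $U_1$ to the unique $\beta$-KMS state $\phi_\beta$ of $\alpha_1$, which is therefore extremal and hence a factor state. In the GNS representation of $\omega$ the von Neumann algebras generated by $U_1$ and by $U_2$ commute, and factoriality of the image of $U_1$ forces the product decomposition $\omega = \phi_\beta \otimes \omega_2$ with $\omega_2 \in \KMS_\beta(\alpha_2)$; conversely every such product is $\beta$-KMS. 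The resulting affine bijection $\omega_2 \mapsto \phi_\beta \otimes \omega_2$ carries extreme points to extreme points homeomorphically, since $\phi_\beta$ is already extremal and a tensor product of factor states is factorial, so it is a strong affine isomorphism; composing with the isomorphism of Lemma~\ref{19-11-18a} then finishes the argument.

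The delicate points to nail down are precisely this factoriality/commuting-factor factorization of the tensor-product KMS states and the verification that it respects the extreme boundaries. Alternatively, and perhaps more in keeping with the machinery already developed, the same conclusion can be reached inside the projective-matrix-system picture of Lemma~\ref{10-05-19}: the product action has matrix system $(\Br_1 \times \Br_2)(\beta)^{(j)}_{(v,w),(v',w')} = \Br_1(\beta)^{(j)}_{v,v'}\,\Br_2(\beta)^{(j)}_{w,w'}$, and since $\varprojlim_j \Br_1(\beta)^{(j)}$ is a single ray spanned by the strictly positive vector coming from $\phi_\beta$, one must show that $\varprojlim_j (\Br_1 \times \Br_2)(\beta)^{(j)}$ reduces via $\rho \mapsto u \otimes \rho$ to $\varprojlim_j \Br_2(\beta)^{(j)}$; here the nontrivial step is the surjectivity of this map, i.e. that every element of the product inverse limit factorizes through the ray, which is the inverse-limit incarnation of the factoriality argument above.
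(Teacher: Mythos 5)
Your overall architecture coincides exactly with the paper's: write $U = U_1 \otimes U_2$, apply Lemma~\ref{19-11-18} on $U_1$ and Lemma~\ref{19-11-18a} on $U_2$, form the tensor product flow, and dispose of the ground and ceiling states via Lemma~\ref{19-11-18b}. That part of your argument is correct and complete, and matches the paper.

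The gap is in the step you yourself single out as the heart of the matter: that every $\beta$-KMS state $\omega$ of $\alpha_1\otimes\alpha_2$ factorizes as $\phi_\beta\otimes\omega_2$. Your justification --- that $\pi_\omega(U_1)''$ and $\pi_\omega(U_2)''$ commute and that factoriality of the first ``forces the product decomposition'' --- rests on a principle that is false in general: commuting von Neumann algebras, one of them a factor, do not force the vector state to split (every state on $M_2\otimes M_2$ restricts to a factor state on each tensor factor, yet entangled states are not product states). What actually makes the factorization work is the KMS condition, not commutativity plus factoriality. Two ways to repair it: (i) abstractly, $\pi_\omega(U_1\otimes 1)''$ is globally invariant under the modular group of $\bigl(\pi_\omega(U_1\otimes U_2)'',\Omega\bigr)$, so Takesaki's theorem supplies an $\omega$-preserving conditional expectation $E$ onto it; $E(\pi_\omega(1\otimes b))$ is then central in $\pi_\omega(U_1\otimes 1)''$, which is isomorphic to the factor $\pi_{\phi_\beta}(U_1)''$ because $\Omega$ is separating, hence scalar, and $\omega=\phi_\beta\otimes\omega|_{U_2}$ follows; or (ii) the paper's elementary route: for $b\geq 0$ fixed by $\alpha^{F_2}$ the functional $a\mapsto\omega(a\otimes b)$ satisfies the $\beta$-KMS condition for $\alpha^{F_1}$ (test it on elements $a\otimes b^{1/2}$), hence is a non-negative multiple of the unique $\beta$-KMS functional, and combining this with the fact that $\omega$ factorizes through $\id\otimes P$, where $P$ is the conditional expectation onto the diagonal of $\AF(\Br^2)$ (which sits inside the fixed-point algebra of $\alpha^{F_2}$), yields $\omega=\omega_\beta\otimes\omega''$. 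Your projective-matrix-system alternative has the same hole: you note yourself that surjectivity of $\rho\mapsto u\otimes\rho$ is the nontrivial step and do not supply it. (One small point in your favour: once $\omega_2\mapsto\phi_\beta\otimes\omega_2$ is known to be an affine homeomorphism, it is automatically a strong affine isomorphism, so the extreme-boundary verification you worry about is not an additional issue.)
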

\begin{proof} Write $U \simeq U_1 \otimes U_2$ where $U_i, i =1,2$ are UHF algebras. It follows from Lemma \ref{19-11-18} that there is a generalized gauge action $\alpha^1$ on $U_1$ such that $\GS(\alpha^1) \simeq S(A_+)$, $\CS(\alpha^1) \simeq S(A_-)$ and such that there is a unique $\beta$-KMS state $\omega_{\beta}$ for $\alpha^1$ for all $\beta$. Let $\Br^1$ be a Bratteli diagram and $F_1$ a potential on $\Br^1$ such that $\alpha^1$ is conjugate to $\alpha^{F_1}$. Then $S(\mathbb G_{F_1}) \simeq S(A_+)$ and $S\left(\mathbb G_{-F_1}\right) \simeq S(A_-)$ by Theorem \ref{09-05-19d}. It follows from Lemma \ref{19-11-18a} that there is a generalized gauge action $\alpha^2$ on $U_2$ such that $\KMS_\beta(\alpha^2)$ is strongly affinely homeomorphic to $\KMS_\beta(\alpha^F)$ for all $\beta \neq 0$ while $\GS(\alpha^2)$ and $\CS(\alpha^2)$ both only contain one state. Let $\Br^2$ be a Bratteli diagram and $F_2$ a potential on $\Br^2$ such that $\alpha^2$ is conjugate to $\alpha^{F_2}$. Note that $\mathbb G_{F_2} \simeq \mathbb G_{-F_2} \simeq \mathbb C$. Set $\alpha_t = \alpha^1_t \otimes \alpha^2_t$ for $t \in \mathbb R$ and note that $\alpha$ is conjugate to $\alpha^{F_1} \otimes \alpha^{F_2}$. We repeat then an argument from the proof of Lemma 5.4 in \cite{Th2} to show that $\KMS_\beta(\alpha) \simeq \KMS_\beta(\alpha^F)$: As in the proof of Lemma \ref{19-11-18b} $\alpha^{F_1} \otimes \alpha^{F_2}$ is conjugate to $\alpha^{F''}$ where the potential $F'': \Br^1 \times \Br^2 \to \mathbb R$ is defined such that
$$
F''(a,a')= F_1(a) + F_2(a') \ .
$$ 
Let $\left\{E^n_{\mu,\mu'}\right\}$ and $\left\{E'^n_{\nu,\nu'}\right\}$ be the matrix units from \eqref{22-09-18} in $\AF(\Br^1)$ and $\AF(\Br^2)$, respectively. Recall that the canonical diagonal $D$ of $\AF(\Br^2)$ is generated by the projections $E^n_{\mu,\mu}, \ \mu \in \mathcal P_n, \ n \in \mathbb N$, and that there is a conditional expectation $P : \AF(\Br^2) \to D$. When $\omega$ is a $\beta$-KMS state for $\alpha^{F''}$ we find that
\begin{equation*}
\begin{split}
&\omega\left( E^n_{\mu,\mu'} \otimes E'^n_{\nu,\nu'}\right) = \delta_{\mu,\mu'} \delta_{\nu,\nu'} \omega\left(E^n_{\mu, \mu} \otimes E'^n_{\nu,\nu}\right) = \omega \left( E^n_{\mu, \mu'} \otimes P(E'^n_{\nu, \nu'})\right) \ ,
\end{split}
\end{equation*}
showing that $\omega$ factorises through $\id_{\AF(\Br^1)} \otimes P$. Let $b \in \AF(\Br^2)$ be a positive element fixed by $\alpha^{F_2}$. Then $a \mapsto \omega(a  \otimes b)$ is a non-negative multiple of the unique $\beta$-KMS functional $\omega_\beta$ for $\alpha^{F_1}$ and hence  
$$
\omega( a \otimes b) = \lambda(b) \omega_\beta(a) \ \ \forall a \in \AF(\Br^1)
$$
for some $\lambda(b) \geq 0$. Since the diagonal $D \subseteq \AF(\Br^2)$ is in the fixed point algebra of $\alpha^{F_2}$ and $\omega$ factorises through $\id_{\AF(\Br^1)}\otimes P$ it follows that
$\omega = \omega_\beta \otimes \omega''$ for some $\omega'' \in \KMS_\beta(\alpha^{F_2})$. This shows that $\KMS_\beta(\alpha^{F''})$ is affinely homeomorphic to $\KMS_\beta(\alpha^{F_2})$ under the map $\omega \mapsto \omega_\beta \otimes \omega$ and hence $\KMS_\beta(\alpha) \simeq \KMS_\beta(\alpha^F)$. 

To complete the proof note that Lemma \ref{19-11-18b} implies that
$$
\GS(\alpha) \simeq S\left(\mathbb G_{F_1} \otimes \mathbb G_{F_2}\right) \simeq  S\left(\mathbb G_{F_1} \right) \simeq \GS(\alpha^1) \simeq S(A_+) \ ,
$$
and similarly,
$$
\CS(\alpha) \simeq S\left(\mathbb G_{-F_1} \otimes \mathbb G_{-F_2}\right) \simeq  S\left(\mathbb G_{-F_1} \right) \simeq \CS(\alpha^1) \simeq S(A_-) \ .
$$
\end{proof}

\section{On the $KMS_{\infty}$-states} Let $\alpha$ be a flow on the unital $C^*$-algebra $A$. Recall, \cite{CM}, that a state $\omega$ on $A$ is a $\KMS_{\infty}$-state when there is a sequence $\{\beta_k\}$ of real numbers such that $\lim_{k \to \infty} \beta_k = \infty$ and a sequence $\{\omega_k\}$ of states on $A$ such that $\omega_k$ is a $\beta_k$-KMS state for $\alpha$ and $\lim_{k \to \infty} \omega_{k} = \omega$ in the weak* topology.\footnote{There is an alternative definition which may be closer to what Connes and Marcolli had in mind. See \cite{CMN}. However, the definition we use here has been adopted in much of the subsequent work, including \cite{LR} and \cite{LLN}. It is not clear if the various definitions agree or not.} When $\alpha$ is approximately inner and $A$ has a trace state there are $\beta$-KMS states for all $\beta \in \mathbb R$ by a result of Powers and Sakai, \cite{PS}, and the compactness of $S(A)$ implies therefore that such flows also have $\KMS_\infty$-states. In particular, the flows we consider in this paper all have $\KMS_\infty$-states. 

Let $F :\Br_{Ar} \to \mathbb R$ be a potential on the Bratteli diagram $\Br$. A state $\omega$ of $\AF(\Br)$ is called a local $\KMS_\infty$-state for $\alpha^F$ when the restriction $\omega|_{\AF_j(\Br)}$ of $\omega$ to $\AF_j(\Br)$ is a $\KMS_\infty$-state for $\alpha^F|_{\AF_j(\Br)}$ for all $j$. A $\KMS_\infty$-state is also a local $\KMS_\infty$-state, but the converse is not generally true, cf. Example \ref{15-09-20}.

For the study of (local) $\KMS_\infty$-states we need some notation. For $j \geq 1, \ v \in \Br_j$, set
$$
\mathcal P^v_j = \left\{ \mu \in \mathcal P_j: \ r(\mu) = v \right\} \ ,
$$
and
$$
p^v_j = \sum_{\mu \in \mathcal P^v_j} E^j_{\mu,\mu} \ ,
$$
and set $p^{v_0}_1 = 1$. Let $j \geq 1$. Then $p^v_j$ is a central projection in $\AF_j(\Br)$ and $p^v_j \AF_j(\Br) \simeq M_{n}(\mathbb C)$ where $n = \# \mathcal P^v_j$. There is therefore a unique positive tracial functional $\Tr_v : \AF_j(\Br) \to \mathbb C$ such that 
$$
\Tr_v\left(p^w_j\right) = \begin{cases} \# \mathcal P^v_j \ , & \ w = v \\ 0 \ , & \ w \neq v \ . \end{cases} 
$$
The flow $\Ad e^{itH_j}$ on $\AF_j(\Br)$ leaves $p^v_j\AF_j(\Br)$ globally invariant and since $\beta$-KMS states are unique on matrix algebras it follows that a $\beta$-KMS state $\omega \in S\left(\AF_j(\Br)\right)$ for the restriction of $\alpha^F$ to $\AF_j(\Br)$ has the form
$$
\omega(\ \cdot \ ) \ = \ \sum_{v \in \Br_j} \omega(p^v_j) \frac{\Tr_v \left( e^{-\beta H_j} \ \cdot \ \right)}{\Tr_v\left(e^{-\beta H_j}\right)} \ .
$$
For $v \in \Br_j$, set $m_v = \min \left\{F(\mu) : \ \mu \in \mathcal P^v_j\right\}$,
$$
\mathcal M^v_j \ = \  \left\{ \mu \in \mathcal P_j^v: \ F(\mu) = m_v \right\} \ ,
$$
and
$$
A_v \ = \ \frac{1}{\#  \mathcal M^v_j} \sum_{\mu \in \mathcal M^v_j} E^j_{\mu , \mu} \ .
$$

\begin{lemma}\label{10-09-20c} A state $\omega$ on $\AF(\Br)$ is a local $\KMS_\infty$ state if and only 
\begin{equation}\label{10-09-20f}
\omega|_{\AF_j(\Br)}( \ \cdot \  ) \ = \ \sum_{v \in \Br_j} \omega\left(p^v_j\right) \Tr_v\left(A_v \ \cdot \ \right) 
\end{equation}
for all $j \geq 1$. 
\end{lemma}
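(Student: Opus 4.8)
The plan is to reduce everything to a computation on the finite-dimensional algebras $\AF_j(\Br)$, since being a local $\KMS_\infty$-state is by definition a condition imposed on each restriction $\omega|_{\AF_j(\Br)}$ separately. Thus it suffices to prove that a state $\sigma$ on $\AF_j(\Br)$ is a $\KMS_\infty$-state for the restricted flow $\alpha^F|_{\AF_j(\Br)} = \Ad e^{itH_j}$ if and only if $\sigma(\,\cdot\,) = \sum_{v \in \Br_j} \sigma(p^v_j)\,\Tr_v(A_v\,\cdot\,)$. I would invoke the description recalled just before the lemma: every $\beta$-KMS state of the restricted flow has the form $\sigma_\beta^\lambda(\,\cdot\,) = \sum_{v \in \Br_j} \lambda_v \frac{\Tr_v(e^{-\beta H_j}\,\cdot\,)}{\Tr_v(e^{-\beta H_j})}$, with weights $\lambda_v = \sigma_\beta^\lambda(p^v_j) \geq 0$ summing to $1$ (note $\sum_v p^v_j = 1$).

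The key step, and the only genuinely computational point, is the block-wise limit as $\beta \to \infty$. First, for $\mu \in \mathcal P^v_j$ the diagonal matrix units $E^j_{\mu,\mu}$ are equivalent minimal projections in the matrix block $p^v_j\AF_j(\Br) \simeq M_{\#\mathcal P^v_j}(\mathbb C)$, so $\Tr_v(E^j_{\mu,\mu}) = 1$ and hence $\Tr_v(e^{-\beta H_j}) = \sum_{\mu\in\mathcal P^v_j} e^{-\beta F(\mu)}$. Factoring $e^{-\beta m_v}$ out of numerator and denominator, each term indexed by a non-minimizing $\mu$ (i.e. $F(\mu) > m_v$) carries a factor $e^{-\beta(F(\mu)-m_v)} \to 0$, while the minimizing terms $\mu \in \mathcal M^v_j$ survive. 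This yields $\lim_{\beta\to\infty}\frac{\Tr_v(e^{-\beta H_j} a)}{\Tr_v(e^{-\beta H_j})} = \frac{1}{\#\mathcal M^v_j}\sum_{\mu\in\mathcal M^v_j}\Tr_v(E^j_{\mu,\mu}a) = \Tr_v(A_v a)$ for every $a \in \AF_j(\Br)$, i.e. the Gibbs state concentrates on the minimal-energy subspace.

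For the direction asserting the formula, suppose $\omega$ is a local $\KMS_\infty$-state and fix $j$. By definition $\omega|_{\AF_j(\Br)} = \lim_k \sigma_{\beta_k}^{\lambda^{(k)}}$ in the weak$^*$ topology for some $\beta_k \to \infty$ and weights $\lambda^{(k)}$. Since each $p^v_j$ is a fixed element of $\AF_j(\Br)$, the weights converge along the defining sequence, $\lambda^{(k)}_v = \sigma_{\beta_k}^{\lambda^{(k)}}(p^v_j) \to \omega(p^v_j)$, so no passage to a subsequence is needed. Combining this with the block limit above and the uniform bound $\left|\frac{\Tr_v(e^{-\beta H_j}a)}{\Tr_v(e^{-\beta H_j})}\right| \leq \|a\|$, the finite sum over $v \in \Br_j$ converges to $\sum_{v \in \Br_j} \omega(p^v_j)\,\Tr_v(A_v a)$, which is \eqref{10-09-20f}.

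Conversely, if $\omega$ satisfies \eqref{10-09-20f}, set $\lambda_v = \omega(p^v_j)$; these form an admissible weight vector, and the block limit shows $\sigma_\beta^\lambda \to \sum_{v \in \Br_j} \lambda_v \,\Tr_v(A_v\,\cdot\,) = \omega|_{\AF_j(\Br)}$ weak$^*$ as $\beta \to \infty$, exhibiting $\omega|_{\AF_j(\Br)}$ as a $\KMS_\infty$-state of the restricted flow for every $j$, hence $\omega$ as a local $\KMS_\infty$-state. The main obstacle is purely the limit computation of the second paragraph; the only care needed elsewhere is the simultaneous handling of the $\beta$-limit and the weight-limit, which is resolved by the exact convergence $\lambda^{(k)}_v \to \omega(p^v_j)$.
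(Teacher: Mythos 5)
Your proposal is correct and follows essentially the same route as the paper: both reduce to the restricted flow $\Ad e^{itH_j}$ on $\AF_j(\Br)$, use the standard block decomposition of $\beta$-KMS states there, compute the $\beta\to\infty$ limit of the normalized Gibbs density on each block $p^v_j\AF_j(\Br)$ to obtain $\Tr_v(A_v\,\cdot\,)$, and handle the two directions by the convergence of the weights $\omega_k(p^v_j)\to\omega(p^v_j)$ and by exhibiting $\sum_v \omega(p^v_j)\Tr_v(A_v\,\cdot\,)$ as the weak$^*$ limit of the Gibbs states with fixed weights, respectively.
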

\begin{proof} Assume that $\omega$ is local $\KMS_\infty$-state and let $\{\omega_k\}$ be a sequence of states on $\AF_j(\Br)$ and $\{\beta_k\}$ a sequence of real numbers such that $\omega_k$ is a $\beta_k$-KMS state for $\alpha^F|_{\AF_j(\Br)}$, $\lim_{k \to \infty} \omega_k = \omega|_{\AF_j(\Br)}$ and $\lim_{k \to \infty} \beta_k = \infty$. Then
\begin{align}
&\omega_k(\ \cdot \ ) \ = \ \sum_{v \in \Br_j} \omega_k(p^v_j) \frac{\Tr_v \left( e^{-\beta_k H_j} \ \cdot \ \right)}{\Tr_v\left(e^{-\beta_k H_j}\right)}\label{10-09-20e} \\
&=  \sum_{v \in \Br_j} \omega_k(p^v_j) \frac{\Tr_v \left( e^{-\beta_k H_j}p^v_j \ \cdot \ \right)}{\Tr_v\left(e^{-\beta_k H_j}\right)} \ . \nonumber
\end{align} 
Fix $v \in \Br_j$ and let $I = \left\{ F(\mu) : \ \mu \in \mathcal P^v_j\right\}$. For $t \in I$, set
$$
p_t \ = \sum_{\mu \in \mathcal P^v_j, F(\mu) = t} \  \ E^j_{\mu,\mu} \ .
$$
Then
$$
e^{-\beta_k H_j}p^v_j  \ = \ \sum_{t \in I} e^{-\beta_k t}p_t \ 
$$
and
$$
 \frac{e^{-\beta_k H_j}p^v_j}{\Tr_v\left(e^{-\beta_k H_j}\right)} \ = \ \frac{\sum_{\mu \in \mathcal M^v_j} E^j_{\mu, \mu}  \ + \ \sum_{t \in I \backslash \{m_v\}}  e^{-\beta_k (t-m_v)}p_t  }{ \# \mathcal M^v_j \ + \ \sum_{t \in I \backslash \{m_v\}} \#\{\mu \in \mathcal P^v_j, \ F(\mu) = t\} e^{-\beta_k (t-m_v)}} \ ,
 $$
from which it follows that 
$$
\lim_{k \to \infty}\frac{e^{-\beta_k H_j}p^v_j}{\Tr_v\left(e^{-\beta_k H_j}\right)} \ = \ A_v \ .
$$  
Thus \eqref{10-09-20f} follows from \eqref{10-09-20e} by letting $k$ go to infinity. For the converse it suffices to show that $\sum_{v \in \Br_j} t^j_v \Tr_v(A_v \ \cdot \ )$ is a $\KMS_\infty$-state for the flow $\Ad e^{itH_j}$ on $\AF_j(\Br)$ when $t^j_v \geq 0$ and $\sum_{v \in \Br_j}t^j_v = 1$. This follows from the previous arguments since they show that
$$
\lim_{\beta \to \infty} \sum_{v \in \Br_j} t^j_v \frac{\Tr_v \left( e^{-\beta H_j} \ \cdot \ \right)}{\Tr_v\left(e^{-\beta H_j}\right)} \ = \ \sum_{v \in \Br_j} t^j_v \Tr_v(A_v \ \cdot \ ) \ .
$$
\end{proof}

\begin{lemma}\label{15-09-20a} A local $\KMS_\infty$-state for $\alpha^F$ is a ground state for $\alpha^F$.
\end{lemma}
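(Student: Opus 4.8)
The plan is to reduce the ground-state inequality to the finite-dimensional subalgebras $\AF_j(\Br)$, where a $\KMS_\infty$-state is governed by the inner flow $\Ad e^{itH_j}$, and then to exploit that such a state concentrates on the lowest-``energy'' eigenspace of $H_j$. As in the proof of \refprop{09-05-19b}, $\bigcup_j \AF_j(\Br)$ is a core for $\delta_F$ by Corollary 3.1.7 in \cite{BR}, so it suffices to establish $-i\omega\left(a^*\delta_F(a)\right) \geq 0$ for every $a \in \AF_j(\Br)$ and every $j$. For such $a$ the generator is given by \eqref{29-08-20}, namely $\delta_F(a) = i(H_ja - aH_j)$, so that the inequality to be shown reads $\omega\left(a^*H_ja\right) - \omega\left(a^*aH_j\right) \geq 0$ and depends only on the restriction $\omega|_{\AF_j(\Br)}$.

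Next I would insert the explicit form of this restriction supplied by \reflemma{10-09-20c}, namely $\omega|_{\AF_j(\Br)} = \sum_{v \in \Br_j} \omega\left(p^v_j\right)\Tr_v\left(A_v \,\cdot\,\right)$. Since each $p^v_j$ is central in $\AF_j(\Br)$ and $H_j$ leaves the block $p^v_j\AF_j(\Br) \simeq M_{\#\mathcal P^v_j}(\mathbb C)$ globally invariant, and since the weights $\omega\left(p^v_j\right)$ are non-negative, the full inequality follows once it is verified for each block functional $\Tr_v\left(A_v\,\cdot\,\right)$ separately.

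The key observation is that, up to normalization, $A_v$ is the spectral projection of $H_j$ inside the block $v$ for its smallest eigenvalue $m_v$. Indeed $A_v = (\#\mathcal M^v_j)^{-1}P_v$ with $P_v = \sum_{\mu \in \mathcal M^v_j} E^j_{\mu,\mu}$ supported exactly on the paths $\mu$ with $F(\mu) = m_v$, so that $H_jP_v = m_vP_v$ and $\Tr_v\left(A_v\,\cdot\,\right) = \Tr_v\left(P_v\,\cdot\,\right)/\Tr_v\left(P_v\right)$. Decomposing $H_j = \sum_k \lambda_k Q_k$ spectrally inside the block, with all $\lambda_k \geq m_v$, one gets $\Tr_v\left(P_v a^*H_ja\right) = \sum_k \lambda_k \Tr_v\left(Q_k aP_va^*Q_k\right) \geq m_v \Tr_v\left(P_v a^*a\right)$, each summand being non-negative, while the trace property together with $H_jP_v = m_vP_v$ gives $\Tr_v\left(P_v a^*aH_j\right) = m_v\Tr_v\left(P_v a^*a\right)$. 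Subtracting yields the desired $\Tr_v\left(A_v\left(a^*H_ja - a^*aH_j\right)\right) \geq 0$.

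I expect the block computation of the third paragraph to be the only substantive point; everything else is the core reduction already employed in \refprop{09-05-19b}. If one prefers to bypass the spectral estimate, an equivalent route is to recall from the proof of \reflemma{10-09-20c} that $\Tr_v\left(A_v\,\cdot\,\right) = \lim_{\beta \to \infty}\Tr_v\left(e^{-\beta H_j}\,\cdot\,\right)/\Tr_v\left(e^{-\beta H_j}\right)$ and to check that $-i\omega_\beta\left(a^*\delta_F(a)\right)$ tends to a non-negative limit along these Gibbs functionals as $\beta \to \infty$, which is precisely the assertion that a $\KMS_\infty$-state of a matrix flow is a ground state.
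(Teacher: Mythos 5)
Your proof is correct, but it takes a genuinely different route from the paper's. The paper does not touch the ground-state inequality at all: it invokes the equivalence (1)$\Leftrightarrow$(2) of Proposition \ref{09-05-19b} and verifies condition (2), namely $\omega(Q_j)=1$ for all $j$. To do so it uses the combinatorial observation that a non-geodesic prefix $\mu \in \mathcal P_j\setminus\mathcal G_j$ eventually has no minimizing extensions, so that at a sufficiently deep level $k\geq j$ one has $A_v(1-Q_j)=0$ for all $v\in\Br_k$, whence $\omega(1-Q_j)=0$ by the formula of Lemma \ref{10-09-20c} applied at level $k$. You instead verify condition (1) directly: after the standard core reduction (the same one the paper uses in the proof of (3)$\Rightarrow$(1) of Proposition \ref{09-05-19b}), you apply the formula of Lemma \ref{10-09-20c} at level $j$ itself and run a spectral estimate in each central block $p^v_j\AF_j(\Br)\simeq M_n(\mathbb C)$, exploiting that $A_v$ is the normalized spectral projection of $H_j$ onto its lowest eigenvalue $m_v$ there; the inequality $\Tr_v\left(P_v a^*H_ja\right)\geq m_v\Tr_v\left(P_va^*a\right)=\Tr_v\left(P_va^*aH_j\right)$ is exactly right, and the reduction to blocks is legitimate because the $p^v_j$ are central and commute with $H_j$. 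Your approach is more self-contained and avoids the paper's passage to a deeper level $k$ and the attendant claim about minimizers; the paper's approach is shorter given Proposition \ref{09-05-19b} and makes transparent that local $\KMS_\infty$-states are supported on the geodesic subdiagram. Your closing alternative --- quoting that weak* limits of $\beta_k$-KMS states with $\beta_k\to\infty$ are ground states for the finite-dimensional flows $\Ad e^{itH_j}$ and then using the core --- is also a valid shortcut.
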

\begin{proof} When $\mu \in \mathcal P_j \backslash \mathcal G_j$ there is an $n_\mu \geq j$ such that $\nu \notin \mathcal M^{r(\nu)}_{|\nu|}$ when $|\nu| \geq n_\mu$ and $\nu[1,j] = \mu$. Since $1 - Q_j = \sum_{\mu \in \mathcal P_j \backslash \mathcal G_j} \ E^j_{\mu, \mu}$ it follows that there is a $k \geq j$ such that $A_v(1-Q_j) = 0$ for all $v \in \Br_k$. By Lemma \ref{10-09-20c} this implies that
$$
\omega(1-Q_j) \ = \  \sum_{v \in \Br_k} \omega\left(p^v_k\right) \Tr_v\left(A_v(1-Q_j) \right) \ = \ 0 \ .
$$ 
Hence $\omega(Q_j) = 1$ for all $j$ and $\omega$ is a ground state by Proposition \ref{09-05-19b}.
\end{proof}

\begin{lemma}\label{10-09-20i} Let $\tau \in T(\AF(\Br^+))$ be a trace state on $\AF(\Br^+)$. Then $\tau \circ Q_F$ is a local $\KMS_\infty$-state.
\end{lemma}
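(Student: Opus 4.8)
The plan is to verify the criterion of \reflemma{10-09-20c}, i.e.\ to show that $\omega:=\tau\circ Q_F$ satisfies \eqref{10-09-20f} for every $j\geq 1$. Since both sides are linear in their argument it suffices to test them on the matrix units $E^j_{\mu,\mu'}$, $(\mu,\mu')\in\mathcal P^{(2)}_j$. I would start by recalling from the proof of \refprop{09-05-19b} that $Q_F(E^j_{\mu,\mu'})=E^j_{\mu,\mu'}$ when $\mu,\mu'\in\mathcal G_j$ and $Q_F(E^j_{\mu,\mu'})=0$ otherwise; in particular $Q_F(p^v_j)=\sum_{\sigma\in\mathcal G_j\cap\mathcal P^v_j}E^j_{\sigma,\sigma}$, which vanishes when $v\notin\Br^+_j$, so that $\omega(p^v_j)=0$ for such $v$ and the corresponding terms on both sides of \eqref{10-09-20f} are zero. (That $\omega$ is a state is clear, as $Q_F$ is unital and positive and $\tau$ is a state.)

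The one genuinely combinatorial input I would isolate is the identity $\mathcal G_j\cap\mathcal P^v_j=\mathcal M^v_j$ for every $v\in\Br^+_j$, and this is where I expect the only real work to lie. The inclusion $\subseteq$ is immediate: a path $\mu\in\mathcal G_j$ extends to an infinite path in $\Br^+$ (no vertex of $\Br^+$ is a sink), which is a geodesic by \reflemma{09-05-19}, so $\mu$ is a geodesic prefix and hence minimizes $F$ over $\mathcal P^v_j$. For the reverse inclusion, let $\mu\in\mathcal M^v_j$ and fix a geodesic $p$ with $r(p[1,j])=v$; I would check that $\mu\,p_{j+1}p_{j+2}\cdots$ is again a geodesic. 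Its prefixes of length $i\leq j$ are $F$-minimizers, since a strictly cheaper path to $r(\mu_i)$ could be spliced in front of $\mu_{i+1}\cdots\mu_j$ to contradict $F(\mu)=m_v$; while for $i>j$ its prefix has $F$-value $m_v+\bigl(F(p[1,i])-F(p[1,j])\bigr)=F(p[1,i])=m_{r(p_i)}$, which is minimal because $p$ is a geodesic. Thus $\mu\,p_{j+1}p_{j+2}\cdots\in P(\Br^+)$ by \reflemma{09-05-19}, and its length-$j$ initial segment $\mu$ is a path in $\Br^+$, i.e.\ $\mu\in\mathcal G_j$.

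With this identity in hand the verification is a trace computation in which the tracial property of $\tau$ is the essential ingredient. Fix $v\in\Br^+_j$; then $p^v_j\AF_j(\Br^+)\cong M_{\#\mathcal M^v_j}(\mathbb C)$ with matrix units $\{E^j_{\mu,\mu'}:\mu,\mu'\in\mathcal M^v_j\}$, and because $\tau$ is a trace it vanishes on the off-diagonal units and assigns a common value $c_v:=\tau(E^j_{\mu,\mu})$ to the diagonal minimal projections $\mu\in\mathcal M^v_j$. Hence, writing $v=r(\mu)=r(\mu')$,
\[
\omega(E^j_{\mu,\mu'})=\tau\bigl(Q_F(E^j_{\mu,\mu'})\bigr)=
\begin{cases} c_v,& \mu=\mu'\in\mathcal M^v_j,\\ 0,&\text{otherwise,}\end{cases}
\]
and $\omega(p^v_j)=\#\mathcal M^v_j\cdot c_v$. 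On the other hand $\Tr_v(A_vE^j_{\mu,\mu'})$ equals $\tfrac{1}{\#\mathcal M^v_j}\delta_{\mu,\mu'}$ when $\mu\in\mathcal M^v_j$ and $0$ otherwise, so the $v$-term on the right-hand side of \eqref{10-09-20f} is $\omega(p^v_j)\,\Tr_v(A_vE^j_{\mu,\mu'})=c_v\,\delta_{\mu,\mu'}$ for $\mu\in\mathcal M^v_j$ and $0$ otherwise, matching $\omega(E^j_{\mu,\mu'})$ term by term. Summing over $v\in\Br_j$, with the $v\notin\Br^+_j$ contributions vanishing on both sides, yields \eqref{10-09-20f}, and \reflemma{10-09-20c} then shows that $\tau\circ Q_F$ is a local $\KMS_\infty$-state.
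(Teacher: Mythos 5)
Your proof is correct, but it is organized differently from the paper's. The paper also reduces to verifying \eqref{10-09-20f}, but instead of testing on matrix units it observes that both sides are $\Ad e^{itH_j}$-invariant, hence factor through the conditional expectation $R$ onto $\AF_j(\Br)^{\alpha^F}$, where both are tracial; it then only has to compare them on the minimal central projections $p_t$, $t \in I$, of that fixed-point algebra, using $Q_jp_tQ_j=0$ for $t\neq m_v$ and $Q_jp^v_jQ_j=Q_jp_{m_v}Q_j$. Those two identities amount to the inclusion $\mathcal G_j\cap\mathcal P^v_j\subseteq\mathcal M^v_j$, which the paper leaves implicit; your argument makes this combinatorial content explicit and in fact establishes the full equality $\mathcal G_j\cap\mathcal P^v_j=\mathcal M^v_j$ for $v\in\Br^+_j$, the reverse inclusion being proved by splicing a minimizing prefix onto the tail of a geodesic. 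That reverse inclusion is genuinely needed for your matrix-unit computation (it guarantees that $\tau$ distributes the mass of $p^v_j$ equally over all of $\mathcal M^v_j$, matching $A_v$), whereas the paper's route sidesteps it because $\tau\circ q_F$ is automatically a trace on the fixed-point algebra and so is constant on the equivalent projections $E^j_{\mu,\mu}$, $\mu\in\mathcal M^v_j$. The trade-off: the paper's argument is shorter and checks fewer elements; yours is more elementary and self-contained, and the identity $\mathcal G_j\cap\mathcal P^v_j=\mathcal M^v_j$ you isolate is of independent use (it is also what makes the formula for $Q_F$ on matrix units consistent with the definition of $A_v$). Both are valid proofs.
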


\begin{proof} By Lemma \ref{10-09-20c} we must show that
$$
\tau \circ Q_F|_{\AF_j(\Br)}(\ \cdot \ ) \ = \ \sum_{v \in \Br_j} \tau\circ Q_F(p^v_j) \Tr_v(A_v \ \cdot \ ) \ .
$$
Both sides of the equation are states that are $e^{it H_j}$-invariant and hence they both factor through the conditional expectation $R : \AF_j(\Br) \to \AF_j(\Br)^{\alpha^F}$. It suffices therefore to show that they agree on $\AF_j(\Br)^{\alpha^F}$ where both are trace states. It suffices therefore to show that they agree on the minimal central projections in $\AF_j(\Br)^{\alpha^F}$. Fix $v \in \Br_j$ and note that the central projection in $p^v_j\AF_j(\Br)^{\alpha^F}$ are the projections $p_t, t \in I$, from the proof of Lemma \ref{10-09-20c}. We must show that the two states agree on $p_t$ for each $t \in I$. Since $\tau \circ Q_F(\ \cdot \ ) = \tau \circ Q_F(Q_j \ \cdot \ Q_j)$ it follows that $\tau \circ Q_F$ annihilates $p_t$ when $t \neq m_v$. By definition of $A_v$ the same is true for the other state in question and hence it suffices to show that they agree on $p_{m_v}$. For this note that $\tau\circ Q_F(p^v_j) \Tr_v(A_v p_{m_v}) = \tau\circ Q_F(p^v_j) \Tr_v(A_v) = \tau\circ Q_F(p^v_j)$ since $A_vp_{m_v} = A_v$ and $\Tr_v(A_v) = 1$, while $\tau \circ Q_F(p_{m_v}) = \tau \circ Q_F(p^v_j)$ because $Q_jp^v_jQ_j = Q_jp_{m_v}Q_j$. 
\end{proof}

\begin{thm}\label{10-09-20h} The map 
$$
T(\AF(\Br^+)) \ni \tau \ \mapsto  \ \tau \circ Q_F
$$
is an affine homeomorphism of $T(\AF(\Br^+))$ onto the set of local $\KMS_\infty$-states for $\alpha^F$.
\end{thm}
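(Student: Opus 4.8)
The plan is to deduce the theorem from the affine homeomorphism $S(\AF(\Br^+)) \to \GS(\alpha^F)$, $\omega' \mapsto \omega' \circ Q_F$, already established in \refthm{09-05-19d}. Since the tracial state space $T(\AF(\Br^+))$ is a weak*-closed, hence compact, convex subset of $S(\AF(\Br^+))$, the restriction of that homeomorphism to $T(\AF(\Br^+))$ is automatically an affine homeomorphism onto its image $\{\tau \circ Q_F : \tau \in T(\AF(\Br^+))\}$. In particular the map in the statement is already known to be injective, continuous and affine, and to carry the compact set $T(\AF(\Br^+))$ homeomorphically onto its range; so the whole content of the theorem reduces to identifying this range with the set of local $\KMS_\infty$-states for $\alpha^F$.

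One inclusion is immediate: \reflemma{10-09-20i} shows that $\tau \circ Q_F$ is a local $\KMS_\infty$-state for every $\tau \in T(\AF(\Br^+))$, so the range is contained in the set of local $\KMS_\infty$-states. For the reverse inclusion, which is the substantive step, I would start from an arbitrary local $\KMS_\infty$-state $\omega$. By \reflemma{15-09-20a} it is a ground state, so \refprop{09-05-19b} provides a unique $\omega' \in S(\AF(\Br^+))$ with $\omega = \omega' \circ Q_F$ and $\omega'|_{\AF_j(\Br^+)} = \omega|_{\AF_j(\Br^+)}$ for all $j$. Everything then comes down to proving that $\omega'$ is tracial, and since $\bigcup_j \AF_j(\Br^+)$ is dense it suffices to show that $\omega|_{\AF_j(\Br^+)}$ is a trace on the finite-dimensional algebra $\AF_j(\Br^+)$ for each $j$.

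To carry this out I would evaluate $\omega$ on the matrix units $E^j_{\mu,\mu'}$ with $\mu,\mu' \in \mathcal G_j$ using the explicit formula of \reflemma{10-09-20c}, namely $\omega(E^j_{\mu,\mu'}) = \sum_{v \in \Br_j} \omega(p^v_j)\,\Tr_v(A_v E^j_{\mu,\mu'})$. A short computation using $A_v E^j_{\mu,\mu'} = (\#\mathcal M^v_j)^{-1}E^j_{\mu,\mu'}$ when $\mu \in \mathcal M^v_j$ and the fact that $\Tr_v$ vanishes on off-diagonal units gives $\Tr_v(A_v E^j_{\mu,\mu'}) = (\#\mathcal M^v_j)^{-1}\delta_{\mu,\mu'}$ when $v = r(\mu)$ and $\mu \in \mathcal M^v_j$, and $0$ otherwise. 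The key observation is that a path $\mu \in \mathcal G_j$, being the initial segment of a geodesic by \reflemma{09-05-19}, is a minimizer, i.e. $\mu \in \mathcal M^{r(\mu)}_j$. Consequently the off-diagonal values $\omega(E^j_{\mu,\mu'})$ with $\mu \neq \mu'$ vanish, while $\omega(E^j_{\mu,\mu}) = \omega(p^{r(\mu)}_j)/\#\mathcal M^{r(\mu)}_j$ depends only on $r(\mu)$; these two facts are precisely the assertion that $\omega|_{\AF_j(\Br^+)}$ is tracial. This identifies $\omega'$ as an element of $T(\AF(\Br^+))$ and finishes the surjectivity.

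The main obstacle I anticipate is this last computation, and more precisely the bookkeeping linking the geodesic paths $\mathcal G_j$ that index $\AF_j(\Br^+)$ with the minimizing paths $\mathcal M^v_j$ appearing in the projection $A_v$ of \reflemma{10-09-20c}. The inclusion $\mathcal G^v_j \subseteq \mathcal M^v_j$ is exactly what makes the trace formula come out, and one must take care that only this inclusion, and not the (more delicate) equality, is needed for the argument.
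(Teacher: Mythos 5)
Your proposal is correct and follows essentially the same route as the paper: one inclusion from Lemma \ref{10-09-20i}, and for the converse, Lemma \ref{15-09-20a} together with Proposition \ref{09-05-19b} to write $\omega = \tau \circ Q_F$, followed by the formula of Lemma \ref{10-09-20c} to verify traciality. The only (harmless) difference is in that last step: the paper notes that $A_v$ is central in $\AF_j(\Br)^{\alpha^F}$, concludes that $\omega$ is tracial on the whole fixed-point algebra and pushes this down through the surjection $q_F$, whereas you compute directly on the matrix units of $\AF_j(\Br^+)$ using the (correct) inclusion $\mathcal G_j \cap \mathcal P^v_j \subseteq \mathcal M^v_j$.
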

\begin{proof} The map takes values in the set of local $\KMS_\infty$-states by Lemma \ref{10-09-20i} and it is clearly affine, continuous and injective. Let $\omega$ be a local $\KMS_\infty$-state. Then $\omega$ is a ground state by Lemma \ref{15-09-20a} and hence $\omega = \tau \circ Q_F$ for some state $\tau$ on $\AF(\Br^+)$ by Proposition \ref{09-05-19b}. Since $A_v$ is a central element in $\AF_j(\Br)^{\alpha^F}$, the formula for $\omega|_{\AF_j(\Br)}$ in Lemma \ref{10-09-20c} implies that $\omega$ is a trace state on $\AF_j(\Br)^{\alpha^F}$. This is true for all $j$ and hence $\omega$ is a trace on $\AF(\Br)^{\alpha^F}$. Since $\AF(\Br^+) = Q_F\left(\AF(\Br)^{\alpha^F}\right)$ this implies that $\tau \in T(\AF(\Br^+))$.
\end{proof}

The next step will to show that in some cases all local $\KMS_\infty$-states are in fact $KMS_\infty$-states, while in others this is not so. When $w \in \Br_j$ and $v \in \Br_{j-1}$, set
$$
\mathcal P^w_j(v) = \left\{ \nu \in \mathcal P^w_j : \ v \in \nu \right\} \ .
$$
We define a matrix $\underline{\Br(\beta)}^{(j)}$ over $\Br_{j-1} \times \Br_j$ such that
$$
\underline{\Br(\beta)}^{(j)}_{v,w} \ = \ \frac{\sum_{\mu \in \mathcal P_{j}^w(v)} e^{-\beta F(\mu)}}{\sum_{\nu \in \mathcal P_j^w} e^{-\beta F(\nu)} } \ .
$$
In particular, $\underline{\Br(\beta)}^{(1)}_{v_0,v} = 1$ for all $v \in \Br_1$. Note that $\underline{\Br(\beta)}^{(j)}$ is left stochastic, i.e.
$$
\sum_{v \in \Br_{j-1}} \underline{\Br(\beta)}^{(j)}_{v,w} \ = \ 1 \ , \ \forall w \in \Br_j \ .
$$
Consider the simplexes
$$
\Delta_{\Br_j} \ = \ \left\{ (t_v)_{v \in \Br_j} \in [0,1]^{\Br_j} : \ \sum_{v \in \Br_j} t_v = 1 \right\} \ 
$$
for $j =0,1,2, \cdots $ and note that $\underline{\Br(\beta)}^{(j)}\Delta_{\Br_j} \subseteq \Delta_{\Br_{j-1}}$ since $\underline{\Br(\beta)}^{(j)}$ is left stochastic. The projective limit 
$$
\varprojlim_j \left(\Delta_{\Br_j}, \underline{\Br( \beta)}^{(j)}\right) \ = \ \left\{ (\psi^j)_{j =0}^\infty  \in \prod_{j=0}^{\infty} \Delta_{\Br_j} : \ \psi^j = \underline{\Br(\beta)}^{(j+1)} \psi^{j+1} \ \ \forall  j \right\} 
$$ 
is a Choquet simplex affinely homeorphic to the simplex $\KMS_\beta(\alpha^F)$ of $\beta$-KMS states for $\alpha^F$.
We shall only need the following half of this assertion.

\begin{lemma}\label{11-09-20} Let $\left(\left(t^v_j\right)_{v \in \Br_j}\right)_{j=1}^\infty \in \varprojlim_j \left(\Delta_{\Br_j}, \underline{\Br( \beta)}^{(j)}\right)$. There is a $\beta-\KMS$ state $\omega^\beta$ for $\alpha^F$ such that
$$
\omega^\beta\left(p^v_j\right) = t^v_j
$$
for all $v,j$.
\end{lemma}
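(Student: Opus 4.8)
The plan is to reduce the statement to Lemma~\ref{10-05-19}, which already identifies $\KMS_\beta(\alpha^F)$ with the normalized inverse limit of the system $\{\Br(\beta)^{(j)}\}$. The point is that the two projective systems $\{\underline{\Br(\beta)}^{(j)}\}$ and $\{\Br(\beta)^{(j)}\}$ are intertwined by rescaling with the partition functions
\[
Z^v_j \ = \ \sum_{\mu \in \mathcal P^v_j} e^{-\beta F(\mu)}, \qquad v \in \Br_j .
\]
Note that $Z^w_j$ is exactly the denominator in the definition of $\underline{\Br(\beta)}^{(j)}$, and that $Z^v_j > 0$ since every vertex is reached by at least one path from $v_0$.

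First I would set $\psi^j_v = t^v_j / Z^v_j$ for $v \in \Br_j$, $j \geq 1$, and $\psi^0 = 1$; these are non-negative. The key computation is the factorization identity
\[
\sum_{\mu \in \mathcal P^w_j(v)} e^{-\beta F(\mu)} \ = \ Z^v_{j-1}\, \Br(\beta)^{(j)}_{v,w},
\]
which follows by writing each $\mu \in \mathcal P^w_j(v)$ as $\mu = \mu' a$ with $\mu' \in \mathcal P^v_{j-1}$ and $a$ an arrow from $v$ to $w$, together with $F(\mu) = F(\mu') + F(a)$. Dividing the defining relation $t^v_{j-1} = \sum_{w} \underline{\Br(\beta)}^{(j)}_{v,w}\, t^w_j$ by $Z^v_{j-1}$ and inserting this identity turns it into $\psi^{j-1}_v = \sum_{w} \Br(\beta)^{(j)}_{v,w}\, \psi^j_w$, so that $(\psi^j)_{j=0}^{\infty} \in \varprojlim_j \Br(\beta)^{(j)}$. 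For the normalization I would treat $j=1$ separately: since $Z^w_1 = \Br(\beta)^{(1)}_{v_0,w}$, the relation $\psi^0 = \Br(\beta)^{(1)}\psi^1$ collapses to $\psi^0 = \sum_{w \in \Br_1} t^w_1 = 1$ because $t^1 \in \Delta_{\Br_1}$.

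With $(\psi^j)$ now in the normalized inverse limit, Lemma~\ref{10-05-19} provides a $\beta$-KMS state $\omega^\beta$ with $\psi(\omega^\beta)^j = \psi^j$ for all $j$. It then remains only to evaluate $\omega^\beta$ on the projections $p^v_j$. Using that $\omega^\beta(E^j_{\mu,\mu}) = e^{-\beta F(\mu)}\psi(\omega^\beta)^j_{r(\mu)}$, which is precisely how $\psi(\omega^\beta)$ is defined, I would sum over $\mu \in \mathcal P^v_j$ to obtain
\[
\omega^\beta\bigl(p^v_j\bigr) \ = \ \sum_{\mu \in \mathcal P^v_j} e^{-\beta F(\mu)}\, \psi^j_v \ = \ Z^v_j\, \psi^j_v \ = \ t^v_j ,
\]
as required.

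The only real content is the factorization identity and the bookkeeping showing that rescaling by the $Z^v_j$ converts the left-stochastic $\underline{\Br(\beta)}$-system into the (unnormalized) $\Br(\beta)$-system; once this is in place, everything is an application of Lemma~\ref{10-05-19}. The main things to watch are that the $Z^v_j$ are strictly positive and that the normalization $\psi^0 = 1$ emerges correctly — both of which rest on every vertex being reachable from $v_0$ and on $t^1$ being a probability vector.
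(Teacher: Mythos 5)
Your proof is correct, but it takes a different route from the paper's. The paper constructs $\omega^\beta$ directly: it defines $\omega_j = \sum_{v \in \Br_j} t^v_j\,\Tr_v(e^{-\beta H_j}\,\cdot\,)/\Tr_v(e^{-\beta H_j})$ on $\AF_j(\Br)$, observes that each $\omega_j$ is a $\beta$-KMS state for $\Ad e^{itH_j}$ (uniqueness of Gibbs states on full matrix summands), and verifies the compatibility $\omega_{j+1}|_{\AF_j(\Br)} = \omega_j$ by checking agreement on the central projections $p^v_j$ via the relation $t^v_j = \sum_w \underline{\Br(\beta)}^{(j+1)}_{v,w} t^w_{j+1}$; the compatible family then glues to the desired state. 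You instead reduce to Lemma \ref{10-05-19} by rescaling with the partition functions $Z^v_j$, and your key identity $\sum_{\mu \in \mathcal P^w_j(v)} e^{-\beta F(\mu)} = Z^v_{j-1}\Br(\beta)^{(j)}_{v,w}$ is exactly what makes the rescaled vectors $\psi^j_v = t^v_j/Z^v_j$ an element of the normalized inverse limit $\varprojlim_j \Br(\beta)^{(j)}$; surjectivity in Lemma \ref{10-05-19} then hands you $\omega^\beta$, and the evaluation $\omega^\beta(p^v_j) = Z^v_j\psi^j_v = t^v_j$ follows from the defining formula for $\psi(\omega)$. The positivity of $Z^v_j$ that you flag is guaranteed by the standing assumption that $v_0$ is the only source (equivalently, by condition \eqref{09-05-19h} for the system $\Br(\beta)^{(j)}$), and your treatment of $\psi^0$ is fine. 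What your approach buys is economy and a little more: it exhibits the affine bijection between $\varprojlim_j(\Delta_{\Br_j},\underline{\Br(\beta)}^{(j)})$ and $\KMS_\beta(\alpha^F)$ explicitly, i.e., it essentially proves the full assertion stated just before the lemma, of which the paper says it only needs one half. The paper's construction is more self-contained at the level of the finite-dimensional approximations and does not depend on Lemma \ref{10-05-19}; the underlying computation (summing $e^{-\beta F(\mu'a)}$ over paths through a fixed vertex) is the same in both arguments.
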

\begin{proof} Define a state $\omega_j$ on $\AF_j(\Br)$ such that
$$
\omega_j( \ \cdot \ ) \ = \ \sum_{v \in \Br_j} t^v_j  \frac{\Tr_v \left( e^{-\beta H_j} \ \cdot \ \right)}{\Tr_v\left(e^{-\beta H_j}\right)} \ .
$$
To check that $\omega_j =\omega_{j+1}|_{\AF_j(\Br)}$, note that both sides are $\beta$-KMS states for $\alpha^F|_{\AF_j(\Br)} = \Ad e^{it H_j}$ and hence it suffices to show that they agree on $p^v_j$. We therefore calculate
\begin{align*}
& \omega_{j+1}\left( p^v_j \right) = \omega_{j+1}\left( \sum_{\nu \in \mathcal P^v_j} \sum_{ a \in s^{-1}(v)} E^{j+1}_{\nu a, \ \nu a}\right) \\
&= \sum_{w \in \Br_{j+1}} t^w_{j+1} \frac{\Tr_w \left( e^{-\beta H_{j+1}}\sum_{\nu \in \mathcal P^v_j} \sum_{ a \in s^{-1}(v) \cap r^{-1}(w)} E^{j+1}_{\nu a, \ \nu a} \right)}{\Tr_w\left(e^{-\beta H_{j+1}}\right)}\\
&  =  \sum_{w \in \Br_{j+1}} t^w_{j+1} \frac{\sum_{\mu' \in \mathcal P^w_{j+1}(v)} e^{-\beta F(\mu')}}{\sum_{\mu \in \mathcal P^w_{j+1}} e^{-\beta F(\mu)}}\\
& = \sum_{w \in \Br_{j+1}} t^w_{j+1} \underline{\Br(\beta)}^{(j+1)}_{v,w} \ = \ t^v_j  \ = \  \omega(p^v_j) \ .
\end{align*}
It follows that there is a state $\omega^\beta$ on $\AF(\Br)$ such that $\omega^\beta|_{\AF_j(\Br)} = \omega_j$ for all $j$. By construction $\omega^\beta|_{\AF_j(\Br)}$ is a $\beta$-KMS state for $\alpha^F|_{\AF(\Br)}$ for all $j$, and hence $\omega^\beta$ is a $\beta$-KMS state for $\alpha^F$ with the required property.
\end{proof}

Define ${\underline{\Br}}^{(j)}$ over $\Br_{j-1} \times \Br_j, \ j \geq 1$ such that
$$
{\underline{\Br}}^{(j)}_{v,w} \ =  \ \frac{\#  \left\{ \mu \in \mathcal P_j^w(v): \ F(\mu) = m_w \right\}}{\# \left\{ \mu \in \mathcal P_j^w: \ F(\mu) = m_w \right\}} \ .
$$ 
\begin{lemma}\label{04-09-20} $\lim_{\beta \to \infty} \underline{\Br(\beta)}^{(j)} \ = \ {\underline{\Br}}^{(j)}$.
\end{lemma}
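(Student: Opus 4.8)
The plan is to recognize this as a standard zero-temperature (Laplace-type) limit and to compute it entrywise. Since $\Br_j$ is finite and the diagram is row-finite without sinks, for each $w \in \Br_j$ the set $\mathcal P^w_j$ is finite and nonempty, so $m_w = \min\left\{F(\mu): \mu \in \mathcal P^w_j\right\}$ is attained; the denominator of $\underline{\Br(\beta)}^{(j)}_{v,w}$ is therefore a finite sum of strictly positive terms, and the entry is well-defined for every $\beta$.

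Fix $v \in \Br_{j-1}$ and $w \in \Br_j$. First I would multiply both numerator and denominator of $\underline{\Br(\beta)}^{(j)}_{v,w}$ by $e^{\beta m_w}$, rewriting
$$
\underline{\Br(\beta)}^{(j)}_{v,w} \ = \ \frac{\sum_{\mu \in \mathcal P_j^w(v)} e^{-\beta(F(\mu) - m_w)}}{\sum_{\nu \in \mathcal P_j^w} e^{-\beta(F(\nu) - m_w)}} \ .
$$
Now every exponent $F(\mu) - m_w$ is $\geq 0$, and it equals $0$ precisely when $F(\mu) = m_w$. Hence as $\beta \to \infty$ each summand $e^{-\beta(F(\mu)-m_w)}$ tends to $1$ if $F(\mu)=m_w$ and to $0$ otherwise.

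Because both sums are finite, the limit of each sum is the sum of the limits of its summands. Thus the numerator tends to $\#\left\{\mu \in \mathcal P^w_j(v): F(\mu) = m_w\right\}$ and the denominator to $\#\left\{\nu \in \mathcal P^w_j: F(\nu) = m_w\right\} = \# \mathcal M^w_j \geq 1$. The limiting denominator is a nonzero integer, so we may pass to the quotient and obtain exactly ${\underline{\Br}}^{(j)}_{v,w}$. Since this holds for every pair $(v,w)$ and the matrices $\underline{\Br(\beta)}^{(j)}$ all have the same finite size $\# \Br_{j-1} \times \# \Br_j$, entrywise convergence is precisely the asserted convergence of matrices.

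There is no serious obstacle in this lemma; the only point requiring any care is noting that the minimum $m_w$ is attained, by finiteness of $\mathcal P^w_j$, so that the normalization by $e^{\beta m_w}$ is legitimate and the limiting denominator is strictly positive. Everything else reduces to the elementary fact that $e^{-\beta t} \to 0$ for $t > 0$ while $e^{-\beta t} = 1$ for $t = 0$, applied termwise to a finite sum.
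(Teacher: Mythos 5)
Your proof is correct and follows essentially the same route as the paper: the paper also normalizes by $e^{\beta m_w}$, splitting each sum into the count of minimizing paths plus an error term that vanishes as $\beta \to \infty$. The only cosmetic difference is that the paper names the error terms explicitly rather than invoking termwise limits of finite sums.
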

\begin{proof} Set
$$
X(\beta)_{v,w} \ = \ \sum_{\mu \in \mathcal P^w_j(v) , \ F(\mu) > m_w} \ \ e^{-\beta (F(\mu) - m_w)}  
$$
and
$$
Y(\beta)_w  = \  \sum_{\mu \in \mathcal P^w_j , \ F(\mu) > m_w} \ \ e^{-\beta (F(\mu) - m_w)}  \ .
$$ 
Then
\begin{align*}
&\underline{\Br(\beta)}^{(j)}_{v,w}  = \frac{\#  \left\{ \mu \in \mathcal P_j^w(v): \ F(\mu) = m_w \right\} \ + \ X(\beta)_{v,w}}{\# \left\{ \mu \in \mathcal P_j^w: \ F(\mu) = m_w \right\} \ + \ Y(\beta)_w}
\end{align*}
and the conclusion follows because $\lim_{\beta \to \infty} X(\beta)_{v,w} = \lim_{\beta \to \infty} Y(\beta)_w  = 0$ for all $v,w$.
\end{proof}

Now we aim to show that when the convergence in Lemma \ref{04-09-20} is sufficiently uniform, all local $\KMS_\infty$-states are genuine $\KMS_\infty$-states. For this we shall use the $l^1$-norm on vectors in $\mathbb R^n$ and the corresponding operator norm on matrices. Both will be denoted by $\| \ \cdot \ \|_1$. The advantage of this norm is that left stochastic matrices are then weak contractions.

\begin{lemma}\label{03-09-20} Assume that $\lim_{\beta \to \infty } \sum_{j=1}^{\infty} \left\|\underline{\Br(\beta)}^{(j)} - {\underline{\Br}}^{(j)} \right\|_1 \ = \ 0$. For every $\psi \in \varprojlim_j \left(\Delta_{\Br_j},{\underline{\Br}}^{(j)}\right)$ there is a collection 
$$
\phi^\beta \in \varprojlim_j \left( \Delta_{\Br_j}, \underline{\Br(\beta)^j} \right) \ , \beta  \in \mathbb R \ ,
$$
such that $\lim_{\beta \to \infty} \phi^\beta = \psi$ in 
$\prod_{n=0}^{\infty} \Delta_{\Br_n}$.
\end{lemma}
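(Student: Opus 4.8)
The plan is to lift $\psi$ through the $\beta$-system by the same pushing-forward-and-taking-a-limit device used for the map $S$ in the proof of \reflemma{09-05-19g}. Write $A^{(j)} = \underline{\Br(\beta)}^{(j)}$ and $B^{(j)} = \underline{\Br}^{(j)}$, and work throughout with the $l^1$-norm $\|\cdot\|_1$, for which every left stochastic matrix — in particular each $A^{(j)}$ and each $B^{(j)}$ — is a weak contraction; this single fact carries the whole argument. By hypothesis the series $\sum_{i=1}^{\infty}\|A^{(i)} - B^{(i)}\|_1$ is finite once $\beta$ is large, say $\beta \geq \beta_0$, and tends to $0$ as $\beta \to \infty$. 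For such $\beta$ and each $j$ I would set
\[
\phi^{\beta,j} \ = \ \lim_{k \to \infty} A^{(j+1)}A^{(j+2)} \cdots A^{(j+k)}\psi^{j+k} \ .
\]
The limit exists because, using the relation $\psi^{j+k} = B^{(j+k+1)}\psi^{j+k+1}$ satisfied by $\psi$, the difference of consecutive terms equals $A^{(j+1)} \cdots A^{(j+k)}\bigl(B^{(j+k+1)} - A^{(j+k+1)}\bigr)\psi^{j+k+1}$, whose $l^1$-norm is at most $\|A^{(j+k+1)} - B^{(j+k+1)}\|_1$ since the $A^{(i)}$ are weak contractions and $\|\psi^{j+k+1}\|_1 = 1$; thus the sequence is Cauchy with tail bounded by $\sum_{i > j+k}\|A^{(i)} - B^{(i)}\|_1$.

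Each partial product lies in the closed simplex $\Delta_{\Br_j}$, so $\phi^{\beta,j} \in \Delta_{\Br_j}$, and applying the continuous map $A^{(j+1)}$ to the limit defining $\phi^{\beta,j+1}$ and reindexing gives $A^{(j+1)}\phi^{\beta,j+1} = \phi^{\beta,j}$; hence $\phi^\beta = (\phi^{\beta,j})_j \in \varprojlim_j\bigl(\Delta_{\Br_j}, \underline{\Br(\beta)}^{(j)}\bigr)$. For $\beta < \beta_0$ I would simply take $\phi^\beta$ to be an arbitrary point of this projective limit, which is nonempty as a sequential inverse limit of nonempty compact spaces with continuous bonding maps.

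Finally, to prove $\phi^\beta \to \psi$ as $\beta \to \infty$ it suffices to bound $\|\phi^{\beta,j} - \psi^j\|_1$ for each fixed $j$. Here I would compare the $k$-th partial product with $\psi^j = B^{(j+1)} \cdots B^{(j+k)}\psi^{j+k}$ using the telescoping identity
\[
A^{(j+1)} \cdots A^{(j+k)} - B^{(j+1)} \cdots B^{(j+k)} \ = \ \sum_{m=1}^{k} B^{(j+1)} \cdots B^{(j+m-1)}\bigl(A^{(j+m)} - B^{(j+m)}\bigr)A^{(j+m+1)} \cdots A^{(j+k)} \ ,
\]
in which every $A$- and $B$-factor is a weak contraction, so applying it to $\psi^{j+k}$ gives $\|A^{(j+1)} \cdots A^{(j+k)}\psi^{j+k} - \psi^j\|_1 \leq \sum_{i=j+1}^{j+k}\|A^{(i)} - B^{(i)}\|_1$. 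Combining this with the Cauchy tail estimate above yields, for every $k$,
\[
\|\phi^{\beta,j} - \psi^j\|_1 \ \leq \ \sum_{i=j+1}^{\infty}\|A^{(i)} - B^{(i)}\|_1 \ \leq \ \sum_{i=1}^{\infty}\bigl\|\underline{\Br(\beta)}^{(i)} - \underline{\Br}^{(i)}\bigr\|_1 \ ,
\]
a bound independent of $j$ that tends to $0$ as $\beta \to \infty$ by hypothesis; this forces $\phi^\beta \to \psi$ in $\prod_n \Delta_{\Br_n}$. The delicate point — and the reason for imposing the summability hypothesis rather than just the levelwise convergence of \reflemma{04-09-20} — is that the one convergent series $\sum_i\|A^{(i)} - B^{(i)}\|_1$ must simultaneously guarantee the existence of the defining limit and control the deviation $\phi^{\beta,j} - \psi^j$ uniformly in $j$; the weak-contraction property of left stochastic matrices in the $l^1$-norm is precisely what lets a single tail bound do both jobs.
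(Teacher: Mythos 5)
Your proof is correct and follows essentially the same route as the paper: the heart of both arguments is the telescoping comparison of the products $\underline{\Br(\beta)}^{(j+1)}\cdots\underline{\Br(\beta)}^{(k)}\psi^{k}$ with $\psi^{j}$, exploiting that left stochastic matrices are weak contractions for $\|\cdot\|_1$. The only difference is in packaging: the paper obtains $\phi^\beta$ as a condensation point of the truncated lifts in the compact product $\prod_{n}\Delta_{\Br_n}$, whereas you build it directly as a Cauchy limit of the partial products (using summability of $\sum_{i}\|\underline{\Br(\beta)}^{(i)}-\underline{\Br}^{(i)}\|_1$ for large $\beta$, with an arbitrary choice for small $\beta$), which is equally valid.
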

\begin{proof} Let $\psi \in \varprojlim_j \left(\Delta_{\Br_j},{\underline{\Br}}^{(j)}\right)$. For $k \in \mathbb N$, $\beta \in \mathbb R$, define $\phi^{\beta}(k) \in \prod_{n=0}^{\infty} \Delta_{\Br_n}$
such that
$$  
\phi^{\beta}(k)_j = \begin{cases} \Br(\beta)^{(j+1)}\Br(\beta)^{(j+2)} \cdots \Br(\beta)^{(k)}\psi_k , & \ j < k \\ \psi_j , & \ j \geq k \ . \end{cases} 
$$
Let $\phi^\beta$ be a condensation point of $\left\{\phi^\beta(k)\right\}_{k=1}^\infty$ in $\prod_{n=0}^{\infty} \Delta_{\Br_n}$. Then $\phi^\beta \in \varprojlim_j \left( \Delta_{\Br_j}, \underline{\Br(\beta)^j} \right)$. To see that $\lim_{\beta \to \infty} \phi^\beta = \psi$, let $j \in \mathbb N$ and $\epsilon >0$ be given. By assumption we can choose $\beta_0$ such that $\sum_{n=1}^{\infty} \left\|\underline{\Br(\beta)}^{(n)} - {\underline{\Br}}^{(n)} \right\|_1 \leq \epsilon$ when $\beta \geq \beta_0$. Let $\beta \geq \beta_0$. By construction of $\phi^\beta$ there is a $k > j$ such that
\begin{align*} 
&\left\|\psi_j - \phi^{\beta}_j\right\|_1  \ \leq \ \left\|\psi_j - \underline{\Br(\beta)}^{(j+1)}\underline{\Br(\beta)}^{(j+2)} \cdots \underline{\Br(\beta)}^{(k)}\psi_k \right\|_1 + \epsilon\\
&\\
& = \left\| {\underline{\Br}}^{(j+1)}\cdots {\underline{\Br}}^{(k)}\psi_k  - \underline{\Br(\beta)}^{(j+1)} \cdots \underline{\Br(\beta)}^{(k)}\psi_k \right\|_1 + \epsilon\\
& \\
& \leq \left\|\underline{\Br(\beta)}^{(k)} - {\underline{\Br}}^{(k)}\right\|_1\\
& + \left\| {\underline{\Br}}^{(j+1)}\cdots {\underline{\Br}}^{(k)}\psi_k  - \underline{\Br(\beta)}^{(j+1)}\cdots \underline{\Br(\beta)}^{(k-1)}{\underline{\Br}}^{(k)}\psi_k \right\|_1 + \epsilon \ \\
& \\
&  \leq \left\|\underline{\Br(\beta)}^{(k)}  - {\underline{\Br}}^{(k)}\right\|_1 +\left\|\underline{\Br(\beta)}^{(k-1)}  - {\underline{\Br}}^{(k-1)}\right\|_1 \\
& + \left\| {\underline{\Br}}^{(j+1)}\cdots {\underline{\Br}}^{(k)}\psi_k  - \underline{\Br(\beta)}^{(j+1)} \cdots \underline{\Br(\beta)}^{(k-2)}{\underline{\Br}}^{(k-1)}{\underline{\Br}}^{(k)}\psi_k \right\|_1 + \epsilon \\
& \\
& \leq \ \ \cdots
& \\
& \leq \sum_{n=j+1}^k \left\|\underline{\Br(\beta)}^{(n)} - {\underline{\Br}}^{(n)} \right\|_1 + \epsilon \ \leq \ 2 \epsilon \ .
\end{align*}
\end{proof}

\begin{thm}\label{10-09-20j} Assume that
$\lim_{\beta \to \infty } \sum_{j=1}^{\infty} \left\|\underline{\Br(\beta)}^{(j)} - {\underline{\Br}}^{(j)} \right\|_1 \ = \ 0$.
Then all local $\KMS_\infty$-states for $\alpha^F$ are $\KMS_\infty$-states for $\alpha^F$, and hence the set of $\KMS_\infty$-states is convex and affinely homeomorphic to the tracial state space $T(\AF(\Br^+))$ of $\AF(\Br^+)$ via the map of Theorem \ref{10-09-20h}.
\end{thm}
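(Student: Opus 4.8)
The plan is to exhibit every local $\KMS_\infty$-state as an explicit weak* limit of genuine $\beta$-KMS states as $\beta\to\infty$, using Lemmas \ref{11-09-20} and \ref{03-09-20} to produce the approximants and to guarantee that the data defining them converge. First I would observe that a local $\KMS_\infty$-state $\omega$ is completely determined by the scalars $\omega(p^v_j)$: by Lemma \ref{10-09-20c} its restriction to each $\AF_j(\Br)$ equals $\sum_{v\in\Br_j}\omega(p^v_j)\Tr_v(A_v\ \cdot\ )$. Evaluating the level $j+1$ version of this formula on the central projection $p^v_j$ and using the identity $\Tr_w(A_wp^v_j)=\underline{\Br}^{(j+1)}_{v,w}$ (which holds since $A_wp^v_j$ is the normalized sum of exactly those minimal diagonal units $E^{j+1}_{\mu,\mu}$ whose path passes through $v$) gives the consistency relation $\omega(p^v_j)=\sum_{w\in\Br_{j+1}}\underline{\Br}^{(j+1)}_{v,w}\,\omega(p^w_{j+1})$. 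Hence the vector $\psi=(\psi^j)_j$ with $\psi^j_v=\omega(p^v_j)$ lies in $\varprojlim_j\left(\Delta_{\Br_j},\underline{\Br}^{(j)}\right)$.

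With $\psi$ in hand I would invoke the hypothesis through Lemma \ref{03-09-20}, which produces a family $\phi^\beta\in\varprojlim_j\left(\Delta_{\Br_j},\underline{\Br(\beta)}^{(j)}\right)$ with $\lim_{\beta\to\infty}\phi^\beta=\psi$ in $\prod_n\Delta_{\Br_n}$. For each $\beta$, Lemma \ref{11-09-20} then supplies a $\beta$-KMS state $\omega^\beta$ for $\alpha^F$ with $\omega^\beta(p^v_j)=(\phi^\beta_j)_v$, so in particular $\omega^\beta(p^v_j)\to\omega(p^v_j)$ for every $v,j$.

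The crux is to upgrade this to weak* convergence $\omega^\beta\to\omega$. Since all $\omega^\beta$ are states and $\bigcup_j\AF_j(\Br)$ is dense in $\AF(\Br)$, it suffices to check convergence on each fixed $\AF_j(\Br)$. There $\omega^\beta$ has the KMS form $\sum_{v\in\Br_j}\omega^\beta(p^v_j)\,\Tr_v\!\left(e^{-\beta H_j}\ \cdot\ \right)/\Tr_v(e^{-\beta H_j})$, and the computation inside the proof of Lemma \ref{10-09-20c} shows that the density $e^{-\beta H_j}p^v_j/\Tr_v(e^{-\beta H_j})$ converges to $A_v$ as $\beta\to\infty$. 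Combining this convergence, which is uniform on the finite-dimensional algebra $\AF_j(\Br)$, with $\omega^\beta(p^v_j)\to\omega(p^v_j)$ yields $\lim_{\beta\to\infty}\omega^\beta|_{\AF_j(\Br)}=\sum_{v}\omega(p^v_j)\Tr_v(A_v\ \cdot\ )=\omega|_{\AF_j(\Br)}$ by Lemma \ref{10-09-20c}. Taking $\beta_k=k$ then realizes $\omega$ as a weak* limit of $\beta_k$-KMS states with $\beta_k\to\infty$, so $\omega$ is a genuine $\KMS_\infty$-state.

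To finish, I would note that every $\KMS_\infty$-state is automatically a local one, so the two classes now coincide; Theorem \ref{10-09-20h} then identifies this common set, via $\tau\mapsto\tau\circ Q_F$, with $T(\AF(\Br^+))$, which is convex, giving both asserted conclusions. I expect the \emph{main obstacle} to be the weak* convergence step: one must control two limits simultaneously, namely the scalar weights $\omega^\beta(p^v_j)\to\omega(p^v_j)$ delivered by the summability hypothesis via Lemma \ref{03-09-20}, together with the density matrices $e^{-\beta H_j}p^v_j/\Tr_v(e^{-\beta H_j})\to A_v$, and to observe that on a fixed finite level the latter limit is uniform so the two combine into the product limit cleanly. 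Without the summability hypothesis one only has the level-by-level convergence $\underline{\Br(\beta)}^{(j)}\to\underline{\Br}^{(j)}$ of Lemma \ref{04-09-20}, which is not enough to make the weights of a prescribed local $\KMS_\infty$-state approximable by actual $\beta$-KMS weights.
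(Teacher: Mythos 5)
Your proposal is correct and follows essentially the same route as the paper: you verify that the weights $\bigl(\omega(p^v_j)\bigr)_{v,j}$ form an element of $\varprojlim_j\bigl(\Delta_{\Br_j},\underline{\Br}^{(j)}\bigr)$ via the formula of Lemma \ref{10-09-20c}, then combine Lemma \ref{03-09-20} and Lemma \ref{11-09-20} to produce $\beta$-KMS states $\omega^\beta$ with $\omega^\beta(p^v_j)\to\omega(p^v_j)$, and conclude weak* convergence. Your explicit justification of the final weak* step (combining the convergence of the densities $e^{-\beta H_j}p^v_j/\Tr_v(e^{-\beta H_j})\to A_v$ with the convergence of the weights on each finite level) is a detail the paper leaves implicit, but it is exactly the intended argument.
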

\begin{proof} Let $\omega$ be local $\KMS_\infty$-state. We claim that
\begin{equation}\label{10-9-20k}
\left( \left( \omega(p^v_j)\right)_{v \in \Br_j}\right)_{j=0}^\infty \ \in \ \varprojlim \left( \Delta_{\Br_j}, {\underline{\Br}}^{(j)}\right) \ .
\end{equation}

The verification is a direct check based on \eqref{10-09-20f}:
\begin{align*}
& \omega(p^v_j) = \omega\left( \sum_{w \in \Br_{j+1}} \sum_{\mu \in \mathcal P^v_j} \sum_{a \in s^{-1}(v) \cap r^{-1}(w)} E^{j+1}_{\mu a, \mu a}\right) \\
& =  \sum_{w \in \Br_{j+1}}  \omega\left(p^w_{j+1}\right) \sum_{\mu \in \mathcal P^v_j} \sum_{a \in s^{-1}(v) \cap r^{-1}(w)} \Tr_w\left( A_w E^{j+1}_{\mu a, \mu a} \right) \\
& = \sum_{w \in \Br_{j+1}}  \omega\left(p^w_{j+1}\right) \frac{\# \mathcal M^w_{j+1} \cap \mathcal P^w_{j+1}(v)}{\# \mathcal M^w_{j+1}} \\
& = \sum_{w \in \Br_{j+1}} \underline{\Br}^{(j+1)}_{v,w} \omega\left(p^w_{j+1}\right) . 
\end{align*}

By combining \eqref{10-9-20k} with Lemma \ref{03-09-20} and Lemma \ref{11-09-20} we deduce the existence of a $\beta$-KMS state $\omega^\beta$ for all $\beta\in \mathbb R$ such that $\lim_{\beta \to \infty} \omega^\beta(p^v_j) = \omega(p^v_j)$ for all $v,j$. It follows that $\lim_{\beta \to \infty} \omega^\beta = \omega$ in the weak* topology.
\end{proof}

\begin{example}\label{10-09-20l} To see Theorem \ref{10-09-20j} in action, consider the Bratteli diagram $\Br$ with potential $F$ given by the following diagram.
\begin{equation*}\label{11-12-18a*}
\begin{xymatrix}{ &\Br & \ar[ld]_-{0} v_0 \ar[rd]^-{0} \\
 & \ar[d]_-{0} \ar[rrd]^(0.3){2} &  & \ar[d]^-{0} \ar[lld]_(0.3){2} \\  
 & \ar[d]_-{0}  \ar[rrd]^(0.3){3} &  & \ar[d]^-{0} \ar[lld]_(0.3){3} \\  
 & \ar[d]_-{0}  \ar[rrd]^(0.3){4} &  & \ar[d]^-{0} \ar[lld]_(0.3){4} \\  
 & \ar[d]_-{0}  \ar[rrd]^(0.3){5} &  & \ar[d]^-{0} \ar[lld]_(0.3){5} \\  
 & \ar[d]_-{0}  \ar[rrd]^(0.3){6} &  & \ar[d]^-{0} \ar[lld]_(0.3){6} \\  
 & \ar[d]_-{0} \ar[rrd]^(0.3){7} &  & \ar[d]^-{0} \ar[lld]_(0.3){7} \\  
 &\vdots &\vdots  & \vdots }
 \end{xymatrix}
 \end{equation*}
 Then
 $$
 \underline{\Br(\beta)}^{(j)} \ = \ \left( \begin{matrix} \frac{1}{1+e^{-\beta j}} & \frac{e^{-\beta j}}{1+e^{-\beta j}}  \\ \frac{e^{-\beta j}}{1+e^{-\beta j}}  &  \frac{1}{1+e^{-\beta j}} \end{matrix} \right) 
 $$
 and $\underline{\Br}^{(j)} = \left(\begin{smallmatrix} 1 & 0 \\ 0 & 1 \end{smallmatrix}\right)$ when $j \geq 2$. Hence
 $$
 \left\| \underline{\Br(\beta)}^{(j)} - \underline{\Br}^{(j)} \right\|_1 \ \leq \  2e^{-\beta j} 
 $$
when $j \geq 2$, while $\left\| \underline{\Br(\beta)}^{(1)} - \underline{\Br}^{(1)} \right\|_1 =0$. It follows that
$$
\sum_{j=1}^{\infty} \left\|\underline{\Br(\beta)}^{(j)} - {\underline{\Br}}^{(j)} \right\|_1 \ \leq \ \frac{2 e^{-\beta}}{1-e^{-\beta}}
$$ 
when $\beta >0$ and hence Theorem \ref{10-09-20j} applies. We conclude that all local $\KMS_\infty$-states for $\alpha^F$ are $\KMS_\infty$-states. Since $\AF(\Br^+) \simeq \mathbb C^2$ we can combine with Theorem \ref{09-05-19d} to conclude that all ground states are $\KMS_\infty$-states, and that the set of ground states is affinely homeomorphic to the interval $[0,1]$; the state space of $\mathbb C^2$.
 
\end{example}

We show next by example that in general not all local $\KMS_\infty$-states are $\KMS_\infty$-states.

\begin{example}\label{15-09-20} Consider the flow $\alpha^{F_2}$ defined by the Bratteli diagram and potential described by the labelled graph $\Br_2$ in Example \ref{09-05-19e}. For $\beta \in \mathbb R$ consider the matrix
$$
A(\beta) = \left( \begin{matrix} 1 & e^{-\beta} \\ e^{-\beta} & 1 \end{matrix} \right) \ .
$$
Then $\Br_2(\beta)^{(1)} = \left( \begin{matrix} 1 & 1 \end{matrix} \right)$ and $\Br_2(\beta)^{(j)} = A(\beta)$ when $j \geq 2$. We claim that there is only one element $(\psi^j)_{j=0}^{\infty} \in \varprojlim_j \Br_2(\beta)^{(j)}$ with $\psi^0 = 1$. To see this let $(\psi^j)_{j=0}^{\infty}$ be such an element. The largest eigenvalue of $A(\beta)$ is $1+e^{-\beta}$ and the orthogonal projection onto the corresponding eigenspace is given by the matrix
$$
 \frac{1}{2}\left( \begin{matrix} 1 & 1 \\ 1 & 1 \end{matrix} \right) \ .
 $$
 It follows therefore from the Perron-Frobenius theorem that 
$$
\lim_{j \to \infty} \left( 1+e^{-\beta}\right)^{-j}A(\beta)^j \ = \ \frac{1}{2}\left( \begin{matrix} 1 & 1 \\ 1 & 1 \end{matrix} \right) \ .
$$
Let $\mathbb P : \left\{ (x,y) : \ x \geq 0,y \geq 0\right\} \backslash \{(0,0)\} \to \mathbb R^2$ be the function 
$$
\mathbb P(x,y) = \left(\frac{x}{x+y},\frac{y}{x+y}\right) \ .
$$
 By using that $\mathbb Ptz = \mathbb Pz$ when $t > 0$  we find that
\begin{align*}
&\psi^n = A(\beta)^j\psi^{j+n} = \left(\sum_{v \in \Br_n}\psi^n_v\right) \mathbb P\left(A(\beta)^j\psi^{j+n}\right)  \\
&= \left(\sum_{v \in \Br_n}\psi^n_v\right) \mathbb P\left(\left( 1+e^{-\beta}\right)^{-j}A(\beta)^j \frac{\psi^{j+n}}{\left\|\psi^{j+n}\right\|}\right)\\
&   =\lim_{j \to \infty}\left(\sum_{v \in \Br_n}\psi^n_v\right) \mathbb P\left(\left( 1+e^{-\beta}\right)^{-j}A(\beta)^j \frac{\psi^{j+n}}{\left\|\psi^{j+n}\right\|}\right)\\
&=  \lim_{j \to \infty}\left(\sum_{v \in \Br_n}\psi^n_v\right) \mathbb P\left(\frac{1}{2}\left( \begin{matrix} 1 & 1 \\ 1 & 1 \end{matrix} \right) \frac{\psi^{j+n}}{\left\|\psi^{j+n}\right\|}\right) = \left( \frac{\sum_{v \in \Br_n}\psi^n_v}{2}, \frac{\sum_{v \in \Br_n}\psi^n_v}{2}\right) \ 
\end{align*}
when $n \geq 1$. Thus the two coordinates of $\psi^n$ are the same when $n \geq 1$. It follows that $\psi$ is unique, and by Lemma \ref{10-05-19} there is therefore a unique $\beta$-KMS state $\omega^\beta$ for all $\beta \in \mathbb R$. Furthermore, from the description of the homeomorphism in Lemma \ref{10-05-19} it follows that $\omega^\beta$ takes the same value $\frac{1}{2}$ on the two projections $p^v_j, v \in \Br_j$, and the same must therefore be true for a $\KMS_\infty$ state, which is therefore unique. As pointed out in Example \ref{09-05-19e}, $\AF(\Br_2^+) \simeq \mathbb C^2$ and the set of ground states is affinely homeomorphic to $[0,1]$. The unique $\KMS_\infty$-state is the average of the two extremal ground states. By combining Theorem \ref{09-05-19d} and Theorem \ref{10-09-20h} it follows that all ground states are local $\KMS_\infty$-states because all states on $\AF(\Br_2^+)$ are trace states.

\end{example}


\end{document}